\newtheorem{property}[theorem]{Property}
\newtheorem{remark}[theorem]{Remark}
\newtheorem{example}[theorem]{Example}
\newtheorem{claim}[theorem]{Claim}
\newcounter{casecount}
\newenvironment{case}{\refstepcounter{casecount}\textbf{Case \arabic{casecount}:}}{}
\newcommandx{\marcetodo}[2][1=]{} 
\newcommandx{\pedrotodo}[2][1=]{} 
\newcommandx{\sebatodo}[2][1=]{} 
\renewcommand{\vec}[1]{\bm{#1}}               
\newcommand{\A}{\mathbb{A}}
\def\R{\ensuremath{\mathbb{R}}\xspace}
\def\Z{\ensuremath{\mathbb{Z}}\xspace}
\def\l{\ensuremath{\ell}\xspace}
\def\V{\ensuremath{V}\xspace}  
\def\calB{\ensuremath{\mathcal{B}}\xspace}  
\def\calL{\ensuremath{\mathcal{L}}\xspace}  
\def\calN{\ensuremath{\mathcal{N}}}  
\def\calH{\ensuremath{\mathcal{H}}\xspace}
\def\calR{\ensuremath{\mathcal{R}}\xspace}
\def\calC{\ensuremath{\mathcal{C}}\xspace}
\def\calM{\ensuremath{\mathcal{M}}\xspace}
\def\calI{\ensuremath{\mathcal{I}}\xspace}
\def\calJ{\ensuremath{\mathcal{J}}\xspace}
\def\calO{\ensuremath{\mathcal{O}}\xspace}
\def\calF{\ensuremath{\mathcal{F}}\xspace}
\def\calA{\ensuremath{\mathcal{A}}\xspace}
\def\calP{\ensuremath{\mathcal{P}}\xspace}
\def\bbI{\ensuremath{\mathbb{I}}\xspace}
\def\bbB{\ensuremath{\mathbb{B}}\xspace}
\def\bbI{\ensuremath{\mathbb{I}}\xspace}
\def\bbV{\ensuremath{\mathbb{V}}\xspace}
\def\bbW{\ensuremath{\mathbb{W}}\xspace}
\newcommand{\ovch}[4][\hspace{0pt}]{\calO^{#1}(#2,#3,#4)} %
\newcommand{\ball}[3]{B(#1,#2,#3)}
\newcommand{\vphi}{\varphi}
\newcommand{\Cle}{\lesssim}
\DeclareMathOperator{\dim1}{dim}
\DeclareMathOperator{\diam}{diam}
\DeclareMathOperator{\depth}{depth}
\DeclareMathOperator{\dist}{dist}
\DeclareMathOperator{\spn}{span}   
\DeclareMathOperator{\gap}{gap}
\DeclareMathOperator{\sat}{sat}
\newcommand{\ch}[2][\hspace{0pt}]{\operatorname{ch}^{#1}#2} 
\newcommand{\fdiv}[3][\hspace{0pt}]{\operatorname{D}^{#1}_{#2}({#3})}
\newcommand{\cdiv}[3][\hspace{0pt}]{\operatorname{\bar{D}}^{#1}_{#2}({#3})}
\newcommand{\ipro}[3][\hspace{0pt}]{\operatorname{M}^{#1}_{#2}({#3})}
\newcommand{\fgap}[2][\calH]{\operatorname{gap}_{#1}#2}
\DeclareMathOperator{\prt}{par}
\DeclareMathOperator{\anc}{anc}
\DeclareMathOperator{\descs}{dsc}
\def\dsc{\ll} 
\def\ab{\subset} 
\def\refi{\succ} 
\newcommand{\mli}[1]{\hat{#1}} 
\def\dbigcup{\mathop{\dot{\bigcup}}} 
\def\dcup{\mathbin{\dot{\cup}}}
\newcommand{\pedro}[1]{#1} 
\newcommand{\marce}[1]{#1} 
\newcommand{\seba}[1]{#1} 
\title{A new perspective on hierarchical spline spaces for adaptivity%
\thanks{This work was Partially supported by Agencia Nacional de Promoci\'on Científica
y Tecnol\'ogica, through grants PICT-2014-2522, PICT-2016-1983, by CONICET through PIP 2015 11220150100661, and by Universidad Nacional del Litoral through grants 
CAI+D 2016-50420150100022LI 
and 2016-50020150100074LI. 
}%
}
\author{
Marcelo Actis$^\dagger$%
\and
Pedro Morin$^\dagger$%
\and
M.~Sebastian Pauletti%
\thanks{Facultad de Ingenieria Quimica, Universidad Nacional del Litoral, 
Santiago del Estero 2829, S3000AOM Santa Fe, Argentina. Researcher of
  Consejo Nacional de Investigaciones cient\'{i}ficas y t\'ecnicas (CONICET).
}
}
\begin{document}

\maketitle
\begin{center}
\today
\end{center}

\begin{abstract}
We introduce a framework for spline spaces of hierarchical type, based on a parent-children relation, which is very convenient for the analysis as well as the implementation of adaptive isogeometric methods.
Such framework makes it simple to create hierarchical \emph{basis} with \emph{control on the overlapping}.
Linear independence is always desired for the well posedness of the linear systems, and to avoid redundancy.
The control on the overlapping of basis functions from different levels is necessary to close theoretical arguments in the proofs of optimality of adaptive methods.
In order to guarantee linear independence, and to control the overlapping of the basis functions, some basis functions additional to those initially marked must be refined.
However, with our framework and refinement procedures, the complexity of the resulting bases is under control, i.e., the resulting bases have cardinality bounded by the number of initially marked functions.

\end{abstract}

\section{Introduction}
Adaptive methods are a fundamental computational tool in science and 
engineering to approximate partial differential equations.

For the finite element method (AFEM) there has been a lot a work starting
in the 1980s and 1990s with the design of a posteriori error estimators with
very successful practical result.
In the 2000s adaptive processes have been shown to converge, and to 
exhibit optimal complexity for several stationary PDE.

The adaptive process for stationary PDE can be described with the classical adaptive step 
$$\Call{Solve}{} \to \Call{Estimate}{} \to \Call{Mark}{} \to \Call{Refine}{},$$
where \Call{Solve}{} computes the solution on a discrete space with
basis \calH;
\Call{Estimate}{} computes a posteriori localized error estimators
and \Call{Mark}{} uses the estimators to indicate where more resolution
should be invested in order to obtain maximum benefit.
Let \calM be this indication, then from \calH and \calM the procedure \Call{Refine}{} constructs a new basis $\calH_*$ and thus a new space.
We thus arrive at an adaptive sequence
\begin{equation}\label{e:adaptive_loop}
 \calH_0 \xrightarrow{\calM_0} \calH_1 \xrightarrow{\calM_1} 
\quad\dots\quad 
\xrightarrow{\calM_{R-1}} \calH_R \rightarrow \dots.
\end{equation}
      
A sound theory of adaptivity in the  context of FEM \cite{Nochetto2009,axioms} hinges on the 
adequate design of local estimators and certain 
combinatorial-geometric properties assignable to the underlying mesh that is 
intimately related to the refinement procedure.
The estimator is usually assigned to the mesh elements, the ones with 
the larger estimators are collected in \calM and then refined to obtain a new mesh and thereby a new basis 
with adequate local resolution.

\pedro{Different alternatives have been proposed to obtain adaptive spline methods, such as hierarchical splines, T-splines, LR-splines or PHT-splines. Among them, hierarchical splines such as those in~\cite{VGJS2011,Kraft1997,BG2016} seem to constitute the simplest approaches to obtain adaptive isogeometric methods.}

The structure of B--splines leads naturally to the idea of assigning estimators to and 
refining basis functions rather than elements \cite{KGS2003, BG2018, MNP2018}.
This idea of assigning the local estimator to basis functions instead of elements has also been studied in the context of finite elements in \cite{BM1987,MNS2003}.

In this work we want to address adaptivity under the assumptions that
the allowed bases are subsets of B--splines of different levels (hierarchical splines) and
that the refinement is done on functions rather than elements.
Under this setting we study the question of what are the simplest but
yet theoretically adequate spaces to successfully develop a theory of
adaptivity.

To be more precise, let \calB be the set of B--splines of all levels.
We want to find a family $\textgoth{F}$ of subsets of \calB, and a
procedure \Call{Refine}{} such that they simultaneously satisfy 
the following properties.
\begin{property}[About \Call{Refine}{}]\label{p:refine}
Given $\calH\in\textgoth{F}$ and $\calM\subset\calH$,
the procedure \Call{Refine}{} returns $\calH_*\in\textgoth{F}$ where:
\begin{enumerate}[(i)]
\item\label{p:refine:res}
$\calH_*$ has more resolution in the places indicated 
by \calM;
\item\label{p:refine:complexity}
when used in the adaptive loop \eqref{e:adaptive_loop} 
there is $C$ (independent of $R$ and $\calM_r$'s) such that
 $\#\calH_R-\#\calH_0 \leq C\sum_{r=0}^{R-1}\#\calM_r$;
\item \label{p:refine:simple}
 It is simple to implement computationally.
\end{enumerate}
\end{property}

\begin{property}[About $\textgoth{F}$]\label{p:generators}
Given $g\in\Z^+$, $\textgoth{F}$ satisfies the following:
\begin{enumerate}[(i)]
\item \pedro{The spaces generated by $\calH \in \textgoth{F}$ possess} good approximation properties in terms of the number of 
degrees of freedoms;
\item\label{p:generators:li} 
 if $\calH\in\textgoth{F}$ then \calH is linearly independent; and
\item\label{p:generators:overlap}
if $\calH\in\textgoth{F}$ for any two functions in \calH that overlap 
their level difference is at most $g$.
\end{enumerate}
\end{property}

Requirement \ref{p:generators}\ref{p:generators:li} is important
for the \Call{Solve}{} in the adaptive loops as well as the design
of estimators.
\pedro{The constraint on the overlap of functions from different levels given by \ref{p:generators}\ref{p:generators:overlap} is a technical requirement for the proof of a contraction property of adaptive algorithms; at a certain point an inverse inequality is required, which cannot be bounded with a uniform constant unless this assumption is met. When \ref{p:generators}\ref{p:generators:overlap} holds, we say that the \emph{gap} is bounded by $g$.}
The complexity bound \ref{p:refine}\ref{p:refine:complexity} is key for the
optimality results.

The motivational guideline to simultaneously satisfy both sets of properties is to start by considering all possible generators (hierarchical generators) obtainable by refinements; a concept that needs a rigorous definition.
%
We define the refinement relation in $\textgoth{F}$ as a set inclusion of certain function sets, the \emph{lineages}, which are associated in a one-to-one fashion with the generators in $\textgoth{F}$ (see Lemma~\ref{l:bijection}).
Thus, the refinement relation induces a partial order on the hierarchical spaces allowing us to rigorously and simply state questions such as 
\begin{quote}
Given a hierarchical basis \calH, which is the smallest refinement of \calH that 
is a basis, has refined $\vphi\in\calH$ and its gap is bounded by
a given number $g$?
\end{quote}
 
We start with some technical preliminaries where we set
the notation and language to conveniently handle the
ancestry and overlapping relation in terms of the multilevel
tensor index of the B--splines (sections \ref{s:preliminaries} and
\ref{s:multilevel}).
We have two notations, one that keeps track of the indexes which is useful for 
the computational implementation of the actual algorithms and another one which 
bounds ``distances'' which is useful for the analysis of complexity.
Section \ref{s:generators} introduces the concept of lineage which
is key for the definition of refinement in Section~\ref{s:refinement}.
In general the generators are not linearly independent, thus in
Section~\ref{s:linear_independence} we discuss how to restrict the
refinement process to yield bases.
Section \ref{s:gap} deals with the important matter of restricting the
refinement to yield bases with a uniformly bounded gap.
Finally in Section~\ref{s:complexity} we show that the refinement
process we have presented, which yields a refined bases with a given bound on
their gaps, satisfies the required complexity bound from Poperty~\ref{p:refine}\ref{p:refine:complexity} when used in the adaptive loop \eqref{e:adaptive_loop}. More precisely, the dimension of each hierarchical space is linearly bounded by the history of marked functions.

\pedro{Some auxiliary results have been collected in an Appendix (Section~\ref{sec:appendix}) in order not to interrupt the flow of ideas in the core of this article.}

\pedro{
The main contributions of the work are:
\begin{enumerate}
\item Characterize refinement as set inclusion (of associated \emph{lineages}) which allows to rigorously talk about the smallest refinement that satisfies certain property.
\item Provide constructive algorithms with rigorous proofs that they perform the required tasks.
\item A thorough analysis of the different ingredients, which are separated showing what implications and properties are linked to each other.
\end{enumerate}

Hierarchical spline spaces were introduced in~\cite{VGJS2011,Kraft1997}, and some modifications were introduced in~\cite{KGS2003,BG2016}. 
Those definitions, except the one in~\cite{KGS2003}, are stated in terms of sequences of subdomains.
Recently, optimality of adaptive isogeometric methods has been proved for elliptic problems~\cite{BG2017,GHP2017}. Those results are based on residual-type a posteriori estimators associated to elements or cells, where one notices some difficulties in the handling on the definition of hierarchical spaces in terms of a domain plus the need to have admissible meshes (gap bounded by one), both works are element-oriented. 
The function-based refinement can be traced to ideas of Krysl~\cite{KGS2003}, further develop by Garau and Buffa~\cite{BG2016}, where they focus on a positive partition of unity.

Our main goal was to rethink a definition of hierarchical spaces with the concept of refinement based on basis functions, that would yield the simplest language to work, theoretically as well as practically.
}

\section{Preliminaries}\label{s:preliminaries}
This section is mainly intended to set the general notation and introduce basic 
concepts.
We will work in the euclidean space $\R^d$ ($d \in \Z^+$).
The tensor product structure intrinsic to the multivariate spline spaces 
leads to many concepts being isomorphic to the integer lattice $\Z^d$, hence
the following notation is convenient.
Given two integers $j$ and $k$ we let 
$[j:k] := \{ l \in \Z : j \leq l \leq k \}$ denote
the section of integers bounded by them.
A multi-index $\vec{j}$ is an element of the integer lattice $\Z^d$,
with its $i$-th component denoted by $j_i$.
Given $\vec{j}$ and $\vec{k}$ in $\Z^d$, we let 
$[\vec{j}:\vec{k}] := \times_{i=1}^d [j_i:k_i]$
denote the lattice box bounded by them.
For $\vec x$ and $\vec y$ in $\R^d$ we let
$\dist(\vec x,\vec y)=\|\vec x-\vec y\|_{\infty} = 
\max_{1\leq i\leq d} |x_i-y_i|$,
so that, for instance, $\diam[\vec{j}-\vec{k}:\vec{j}+\vec{k}] =2 \|\vec k\|_{\infty}$.

Given a finite set $A$, its size or number of elements is denoted 
by $\# A$, whence
$\#[\vec{j}:\vec{k}] = \Pi_{i=1}^d (k_i-j_i+1)$.
When a scalar value appears in the place where a vector value is expected
it means the vector where each component has the value of the scalar.
For example if $\vec i\in\Z^3$ then $[\vec i: 2] = [\vec i: (2,2,2)]$.
When an operation/relation (like inequality) is applied to vectors it means that it is applied to each of their components, i.e., $\vec i < \vec j$ if and only if $i_k<j_k$ for $k\in[1:d]$. 

Finally, the notation $C = A \dcup B$ means that $C = A \cup B$ 
and $A \cap B = \emptyset$, and $\lfloor x\rfloor$ stands for the floor of a 
real number $x$, i.e. the largest integer smaller than or equal to $x$.
\subsection{Index and Box preserving functions}\label{subsec:index_functions}
Whenever a function $P$ goes from $\Z^d$ into $\Z^d$ we call it
an \emph{index function}.

Let $n\ge 2$ be a fixed integer.  
For any $m\in\Z$ we define the index functions
\begin{equation}\label{eq:index_functions_M_D} 
\ipro{m}{\vec i} := n \vec i + m \quad\text{ and }\quad \fdiv{m}{\vec i} := 
\left\lfloor \frac{\vec i-m}{n} \right\rfloor. 
\end{equation} 

For $k\in\Z^+$ the function $\operatorname{D}_m^k$ denotes the $k$-th iterate of 
$\operatorname{D}_m$, that is $\operatorname{D}_m^1=\operatorname{D}_m$ and 
$\fdiv[k]{m}{\cdot}=\fdiv{m}{\fdiv[k-1]{m}{\cdot}}$. For completeness
$\operatorname{D}_m^0$ is the identity function.
Similarly the same definition is considered for  
$\operatorname{M}_m^k$.
Clearly, $\fdiv[k]{m}{\ipro[k]{m}{\vec i}} = \vec i$, but it is not always true 
that $\ipro[k]{m}{\fdiv[k]{m}{\vec i}} = \vec i$. However, the following result, which is an immediate consequence of Lemma~\ref{l:inequalities} in the Appendix,
holds.

\begin{lemma}\label{l:ineq_iff}
	Let $m,m'\in\Z$ and $k\in\Z^+$, then
	$\ipro[k]{m}{\vec j} \leq \vec i \leq \ipro[k]{m'}{\vec j}$
	if and only if 
	$\fdiv[k]{m'-(n-1)}{\vec i} \leq \vec j  \leq \fdiv[k]{m}{\vec i}$.
\end{lemma}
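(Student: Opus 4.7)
The plan is to split the compound inequality into its two one-sided halves and apply the Appendix lemma (Lemma~\ref{l:inequalities}) to each. Since the ``if and only if'' is a conjunction of two equivalences, if Lemma~\ref{l:inequalities} records, for each $m \in \mathbb{Z}$ and $k \in \mathbb{Z}^+$, the two Galois-style correspondences
\begin{equation*}
\operatorname{M}_m^k(\vec{j}) \leq \vec{i} \iff \vec{j} \leq \operatorname{D}_m^k(\vec{i})
\qquad\text{and}\qquad
\vec{i} \leq \operatorname{M}_{m'}^k(\vec{j}) \iff \operatorname{D}_{m'-(n-1)}^k(\vec{i}) \leq \vec{j},
\end{equation*}
then the lemma follows immediately: rewrite $\operatorname{M}_m^k(\vec{j}) \leq \vec{i} \leq \operatorname{M}_{m'}^k(\vec{j})$ as the conjunction of its two halves, apply the appropriate equivalence to each, and conjoin again to get the desired chain $\operatorname{D}_{m'-(n-1)}^k(\vec{i}) \leq \vec{j} \leq \operatorname{D}_m^k(\vec{i})$.

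Since everything operates componentwise (the vector inequalities, $\operatorname{M}_m$, $\operatorname{D}_m$, and the floor), there is no content beyond the scalar (one-dimensional) case. So the work actually lives in the Appendix lemma. The reason the ``lower'' and ``upper'' bounds require different shifts ($m$ vs.\ $m'-(n-1)$) is the standard asymmetry between floor and ceiling on integers: for integers one has $j \leq \lfloor (i-m)/n \rfloor \iff nj+m \leq i$, giving the upper bound on $\vec{j}$ without any shift, whereas $nj+m' \geq i$ is equivalent to $j \geq \lceil(i-m')/n\rceil = \lfloor(i-m'+n-1)/n\rfloor = \operatorname{D}_{m'-(n-1)}(i)$, which is where the shift by $n-1$ enters.

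For the iterated ($k \geq 2$) version, the natural approach is induction on $k$, peeling off one application of $\operatorname{M}$ (or $\operatorname{D}$) at a time and using the $k=1$ equivalence together with monotonicity of $\operatorname{M}_m$ and $\operatorname{D}_m$ (both are nondecreasing in their argument componentwise). This is presumably the content of Lemma~\ref{l:inequalities}, and once granted, the proof of Lemma~\ref{l:ineq_iff} is a one-line combination. I would not expect any real obstacle here; the only subtlety is bookkeeping the shift by $n-1$ correctly through the iteration in the Appendix proof, but in the current statement we may invoke that result as a black box.
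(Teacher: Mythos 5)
Your proposal is correct and follows the same route as the paper, which likewise derives Lemma~\ref{l:ineq_iff} as an immediate consequence of Lemma~\ref{l:inequalities} by splitting the two-sided inequality into its two halves and applying parts (i) and (ii) respectively. Your side remarks on the componentwise reduction, the origin of the $n-1$ shift from the floor/ceiling asymmetry, and the induction on $k$ also match how the Appendix actually establishes Lemma~\ref{l:inequalities}.
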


For any fixed $g\in\Z^+$ and $p\in\Z$, let us define two more index functions
\begin{equation}\label{eq:index_functions_L_R}
\operatorname{L}(\vec i) :=\fdiv[g]{0}{\vec i} - p \quad\text{ and }\quad 
\operatorname{R}(\vec i) :=\fdiv[g]{0}{\vec i + p}.
\end{equation}  
and their $k$-th iterates $\operatorname{L}^k(\vec i)$ and $\operatorname{R}^k(\vec i)$ for $k\in\Z^+$.

A function $F$ given by $F(\vec i)=[P(\vec i):Q(\vec i)]$
where $P,Q$ are index functions is called a \emph{box function}, 
and it is called \emph{box preserving} if
$F[[\vec i:\vec j]] = [P(\vec i):Q(\vec j)]$.

\begin{lemma}\label{l:box_preserving1}
	Let $k\in\Z^+$. If $m'-m \geq n-1$ then 
	$F(\vec i) = [\ipro[k]{m}{\vec i}:\ipro[k]{m'}{\vec i}]$ is box preserving.
	And if $m-m'\geq 0$ then $F(\vec i) = (\fdiv[k]{m}{\vec 
		i}:\fdiv[k]{m'}{\vec i})$ is box 
	preserving.
\end{lemma}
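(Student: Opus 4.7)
The plan is to prove each part by showing two inclusions of lattice boxes: $F[[\vec i:\vec j]] \subseteq [P(\vec i):Q(\vec j)]$ and the reverse. The forward inclusion is routine monotonicity of $\operatorname{M}_m^k$ and $\operatorname{D}_m^k$ in the vector argument $\vec i$ (each component is multiplied by $n^k$, or divided with a floor, which preserves the componentwise order). Given $\vec l \in [\vec i:\vec j]$, this immediately gives $P(\vec i)\le P(\vec l)$ and $Q(\vec l)\le Q(\vec j)$, whence $F(\vec l)\subseteq[P(\vec i):Q(\vec j)]$.

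The real content lies in the reverse inclusion, and here Lemma~\ref{l:ineq_iff} is the main tool: it converts a two-sided condition on $\vec x$ for a fixed $\vec l$ into a two-sided condition on $\vec l$ for a fixed $\vec x$. For part one, given $\vec x\in[\ipro[k]{m}{\vec i}:\ipro[k]{m'}{\vec j}]$, I want to exhibit $\vec l\in[\vec i:\vec j]$ such that $\ipro[k]{m}{\vec l}\le \vec x\le \ipro[k]{m'}{\vec l}$, which by Lemma~\ref{l:ineq_iff} is equivalent to $\fdiv[k]{m'-(n-1)}{\vec x}\le \vec l\le \fdiv[k]{m}{\vec x}$. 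Choosing $\vec l=\max\bigl(\vec i,\fdiv[k]{m'-(n-1)}{\vec x}\bigr)$, I need to verify three things: (a) $\vec i\le \fdiv[k]{m}{\vec x}$, which follows from $\ipro[k]{m}{\vec i}\le \vec x$ by applying the monotone function $\operatorname{D}_m^k$; (b) $\fdiv[k]{m'-(n-1)}{\vec x}\le \vec j$, which follows from $\vec x\le \ipro[k]{m'}{\vec j}$ via Lemma~\ref{l:ineq_iff} (applied with one trivial bound); and (c) $\fdiv[k]{m'-(n-1)}{\vec x}\le \fdiv[k]{m}{\vec x}$, which uses precisely the hypothesis $m'-m\ge n-1$ (so that $m'-(n-1)\ge m$ and therefore the division-with-larger-shift returns a smaller floor).

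For part two, I handle the transposed scenario. Again for $\vec x\in[\fdiv[k]{m}{\vec i}:\fdiv[k]{m'}{\vec j}]$ I seek $\vec l\in[\vec i:\vec j]$ with $\fdiv[k]{m}{\vec l}\le \vec x\le \fdiv[k]{m'}{\vec l}$. Lemma~\ref{l:ineq_iff} (read in the opposite direction, after identifying the parameters $m_1=m'$ and $m_2=m+(n-1)$) turns this into $\ipro[k]{m'}{\vec x}\le \vec l\le \ipro[k]{m+(n-1)}{\vec x}$, where the nontrivial consistency $m'\le m+(n-1)$ is supplied by the hypothesis $m-m'\ge 0$. Choosing $\vec l=\max(\vec i,\ipro[k]{m'}{\vec x})$, the verifications mirror those of part one, using that $\fdiv[k]{m'}{\vec j}\ge\vec x$ yields $\vec j\ge \ipro[k]{m'}{\vec x}$, while $\fdiv[k]{m}{\vec i}\le \vec x$ yields $\vec i\le\ipro[k]{m+(n-1)}{\vec x}$.

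The step I expect to require the most care is the direction-juggling in the second part, where Lemma~\ref{l:ineq_iff} must be applied with a deliberate reindexing $m_1\mapsto m'$, $m_2\mapsto m+(n-1)$ so that its RHS reproduces our LHS; verifying that the hypothesis $m-m'\ge 0$ is exactly what ensures the feasibility condition $m_1\le m_2$ is the place where the two parts differ in substance, and it is what keeps the argument symmetric between the two cases.
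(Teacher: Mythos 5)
Your proof is correct, but it takes a different route from the paper's. The paper disposes of this lemma in one line by invoking two appendix results: Lemma~\ref{lem:decouples}, a general criterion stating that a box function $F(\vec i)=[P(\vec i):Q(\vec i)]$ with non-decreasing, componentwise-decoupled $P,Q$ is box preserving as soon as consecutive image boxes abut or overlap, i.e.\ $Q_\ell(i_\ell)-P_\ell(i_\ell+1)\ge -1$; and Corollary~\ref{lem:box_preserving_iteration}, which says box preservation is closed under iteration. For $k=1$ the hypothesis $m'-m\ge n-1$ is exactly $Q(i)-P(i+1)=m'-m-n\ge -1$, and $m-m'\ge 0$ gives the analogous floor inequality, so the whole lemma reduces to a one-step gap check plus composition. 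You instead work directly at level $k$: after the routine monotonicity inclusion, you use Lemma~\ref{l:ineq_iff} to invert the double inequality and exhibit an explicit preimage index $\vec l=\max(\vec i,\cdot)$, with the hypotheses $m'-m\ge n-1$ and $m-m'\ge 0$ appearing precisely as the feasibility of the inverted interval. Both arguments are sound; the paper's is more modular (the decoupling criterion is reusable for other box functions, and the induction on $k$ is hidden in the composition corollary), while yours is self-contained at level $k$, produces a concrete witness, and makes transparent why each arithmetic hypothesis is needed. Your parameter bookkeeping in part two ($m_1\mapsto m'$, $m_2\mapsto m+(n-1)$) checks out against Lemma~\ref{l:inequalities}.
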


The proof of Lemma~\ref{l:box_preserving1} 
is immediate from Lemma~\ref{lem:decouples} and 
Corollary~\ref{lem:box_preserving_iteration}.

\seba{%
\subsection{B-spline basis}
\subsubsection{One dimensional splines}\label{s:oned_spline}
Let $[a,b]$ be an interval and
$\Delta=\{x_j\}_1^n$ with $a=x_0<x_1<\dots<x_n<x_{n+1}=b$
a partition into $n+1$ $1$-cells (subintervals)
$I_j=[x_j,x_{j+1})$ for $j\in[0:n-1]$ and $I_n=[x_n,x_{n+1}]$.
Let $m$ be a positive integer and 
$\vec M\in\Z^n$ with $1\leq \vec M < m$.
The spline space 
$\V = \V(\Delta, m, \vec M)$ is the space of piecewise polynomials of order
$m$ that are $C^{m-M_j-1}$ at $x_j$ for $j\in[1:n]$.
The elements of $\Delta$ are the interior knots and $\vec M$ 
is the interior multiplicity vector.

Given an extended partition associated with $\V(\Delta, m, \vec M)$
(dictated by the multiplicity vector with an appropriate selection of 
additional end knots) there exists a constructive
process that produces a basis of $\V$ known as the B-spline basis
(see \cite[Theorem 4.9]{Sch2007}).
It is well known (\cite{Sch2007}) that these basis functions have 
minimal support, 
are locally linear independent, 
non-negative and form a partition of unity.
}

\subsubsection{Tensor product splines}\label{s:dd_spline}
If for each $k\in[1:d]$ an interval $[a_k,b_k]$, a
knot partition $\Delta_k$, a positive integer $m_k$
and multiplicity vector $\vec M_k\in\Z^{n_k}$ are provided,
then following the process of Section~\ref{s:oned_spline}
we can define $d$ one-dimensional B--spline bases $\calB_k$, $k\in[1:d]$.

It can be shown \cite{Sch2007} that the set
$\calB = \{\phi = \otimes_{k=1}^{d} \phi_k : \phi_k\in\calB_k\}$
is a family of linearly independent functions over the box 
$\Omega=\times_{k=1}^{d} [a_k,b_k]$, 
which are also non-negative and form a partition of unity. 
The function $\otimes_{k=1}^{d} \phi_k : \Omega \to \R$ is the tensor product of the univariate functions $\phi_k$, i.e.,
$\big(\otimes_{k=1}^{d} \phi_k\big) (\vec{y}) = \Pi_{k=1}^d \phi_k(y_k)$.

Under this scenario we define the tensor product spline
space 
\[ 
\V = \V(\{\Delta_k\}, \{m_k\},\{\vec M_k\})
\]
as the set of linear combinations of the elements of $\calB$.

\section{Multilevel B-splines}\label{s:multilevel}
To each non negative integer \l we want to associate a set of B-splines 
where \l indicates the level of resolution.
Roughly speaking, \l is a measure of the knot density.
In this work for the sake of clarity we restrict ourselves to the subclass 
of spline spaces where the knots of level $\l+1$ are obtained by adding 
$s\in\Z^+$ knots uniformly distributed between the knots of level \l.
And the knots of level $0$ are $\{0,1\}^d$.
Thus we take the domain $\Omega$ to be the unit cube $[0,1]^d$ and in 
going from level \l to $\l+1$ each subinterval is divided into
$n:=s+1$ equal-length subintervals of level $\l+1$.

We also consider maximum interior regularity for the spline spaces.
These restrictions simplify the notation to better concentrate in the new
concepts and ideas introduced in this work but there is no essential impediment 
to extend these ideas to non-uniform and less regular cases.

\subsection{Multilevel cells}
From now on we fix the value of $s \in \Z^+$ and let $n = s+1$.
For $\l \in \Z^+_0$, $i \in [0,n^\l-2]$ let 
$I^\l_i :=[\frac{i}{n^\l},\frac{i+1}{n^\l})$ 
and $I^\l_{n^\ell-1} :=[1-\frac{1}{n^\l},1]$.
\pedro{And for a given $\vec{i} \in [0,n^\l-1]^d = [\ipro[\l]{0}{\vec0}:\ipro[\l]{s}{\vec 0}]$ we define 
$I^\l_{\vec{i}} := \times_{k=1}^d I^\l_{i_k}$ the $\vec{i}$-th $n$-adic cell 
of level \l in dimension $d$.
Moreover, we define $\calI^\l := \{I^\l_{\vec{i}}: \vec{i} \in 
[\ipro[\l]{0}{\vec0}:\ipro[\l]{s}{\vec 0}]\}$ as the set of \emph{cells of 
level $\l$} and $\calI:=\dcup_{\l=0}^\infty \calI^\l$ \emph{the multilevel 
cells}.}
\begin{definition}[Level and index of a cell]
 Given $I\in\calI$ the level of $I$, denoted
 by $\l_{I}$, and the index of $I$, denoted by $\vec{i}_I$, are the unique 
integer and index respectively such that $I^{\l_{I}}_{\vec{i}_I} = I$.
\end{definition}
\begin{definition}[Box of cells]\label{d:box_cells}
	Given $\l\in\Z^+_0$, $\vec{i}$ and $\vec{j}$ in $\Z^d$ with 
	\pedro{$0\le \vec{i} \leq \vec{j} \le n^\l-1$}, we define 
	$\calI^\l[\vec{i}:\vec{j}] :=  
	\{  I^\l_{\vec{k}} : \vec{k}\in[\vec{i}:\vec{j}]\}$
	as the box of cells of level \l bounded by the corners 
	$\vec{i}$ and $\vec{j}$.
\end{definition}

\begin{definition}[Children of a cell]\label{d:cell_ch}
Given $I=I^\l_{\vec{i}}\in\calI$, the the set $\ch I$ of children of $I$ is defined as
$\ch{I^\l_{\vec{i}}} := \calI^{\l+1}[\ipro{0}{\vec{i}}:\ipro{s}{\vec{i}}]$.
And if $\calJ \subset \calI$ then $\ch{\calJ}=\cup_{I\in\calJ}{\ch{I}}$.
\end{definition}

	\begin{remark}
		\label{R:cell_ch}
		Note that $J \in \ch{I}$ iff $\ell_J = \ell_I+1$ and $J\subset I$, and 
moreover $I = \bigcup_{J\in\ch{I}}J$.
	\end{remark}

\begin{definition}[Descendants and ancestors]
Given $I \in \calI$ and $k\in\Z^+$ we define the set of its $k$-descendants 
as the set $\ch[k]{I}$, resulting from $k$ successive applications of the 
children operator.
We also define the $k$-ancestors of $I$ by
$\ch[-k]{I}=\{J\in\calI: I\in \ch[k]{J}\}$, and
$\ch[0]{I}=\{I\}$.
Finally, if $\calJ \subset \calI$ then $\ch[k]{\calJ}=\cup_{I\in\calJ}{\ch[k]{I}}$.
\end{definition}
\begin{lemma}[Box ancestry]\label{l:cell_hie}
For $k\in\Z^+$ and $\vec{i},\vec{j}\in\Z^d$ with $\vec{i}\leq\vec{j}$
\begin{enumerate}[(i)]
\item $\ch[k]{\calI^{\l}[\vec{i}:\vec{j}]} = 
    \calI^{\l+k}[\ipro[k]{0}{\vec{i}}:\ipro[k]{s}{\vec{j}}]$
 \item $\ch[-k]{\calI^{\l}[\vec{i}:\vec{j}]} = 
 \calI^{\l-k}[\fdiv[k]{0}{\vec{i}}:\fdiv[k]{0}{\vec{j}}]$   
\end{enumerate}
\end{lemma}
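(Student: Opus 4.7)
The plan is to prove both parts by induction on $k\in\Z^+$, with the base cases obtained directly from Definition~\ref{d:cell_ch} together with the box-preserving machinery of Lemmas~\ref{l:ineq_iff} and~\ref{l:box_preserving1}. The arithmetic has essentially been packaged there, so the role of this proof is mainly bookkeeping.

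For part (i) with $k=1$, unfolding Definition~\ref{d:cell_ch} gives
\[
\ch{\calI^{\l}[\vec{i}:\vec{j}]} = \bigcup_{\vec{k}\in[\vec{i}:\vec{j}]} \calI^{\l+1}[\ipro{0}{\vec{k}}:\ipro{s}{\vec{k}}].
\]
Since $s = n-1$, Lemma~\ref{l:box_preserving1} (taking $m=0$, $m'=s$, and $k=1$) tells me that the box function $F(\vec{k}) = [\ipro{0}{\vec{k}}:\ipro{s}{\vec{k}}]$ is box preserving, so this union reassembles as $\calI^{\l+1}[\ipro{0}{\vec{i}}:\ipro{s}{\vec{j}}]$, which is the claim. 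For the inductive step I would write $\ch[k+1]{\calI^{\l}[\vec{i}:\vec{j}]} = \ch{\ch[k]{\calI^{\l}[\vec{i}:\vec{j}]}}$, invoke the inductive hypothesis to rewrite the inner set as a box, then apply the base case. The identities $\ipro[k+1]{m}{\vec{i}} = \ipro{m}{\ipro[k]{m}{\vec{i}}}$ for $m\in\{0,s\}$, which follow from the definition of iteration, close the computation.

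For part (ii), I would first reduce to a single cell. By definition $\ch[-k]{I^{\l}_{\vec{m}}} = \{J\in\calI : I^{\l}_{\vec{m}}\in\ch[k]{J}\}$, and combining with (i) the condition on $J = I^{\l-k}_{\vec{i}_J}$ becomes $\ipro[k]{0}{\vec{i}_J} \leq \vec{m} \leq \ipro[k]{s}{\vec{i}_J}$. By Lemma~\ref{l:ineq_iff} with $m=0$ and $m'=s=n-1$ (so that $m'-(n-1)=0$), this is equivalent to $\fdiv[k]{0}{\vec{m}} \leq \vec{i}_J \leq \fdiv[k]{0}{\vec{m}}$, i.e., $\vec{i}_J = \fdiv[k]{0}{\vec{m}}$. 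Hence $\ch[-k]{I^{\l}_{\vec{m}}} = \{I^{\l-k}_{\fdiv[k]{0}{\vec{m}}}\}$ is a singleton, and taking union over $\vec{m}\in[\vec{i}:\vec{j}]$ reduces (ii) to the set equality $\{\fdiv[k]{0}{\vec{m}} : \vec{m}\in[\vec{i}:\vec{j}]\} = [\fdiv[k]{0}{\vec{i}}:\fdiv[k]{0}{\vec{j}}]$. The inclusion $\subseteq$ is immediate from monotonicity of $\fdiv[k]{0}{\cdot}$. For $\supseteq$, given $\vec{p}$ in the target box, the preimage condition $\fdiv[k]{0}{\vec{m}}=\vec{p}$ is equivalent (by Lemma~\ref{l:ineq_iff} again) to $\ipro[k]{0}{\vec{p}}\leq\vec{m}\leq\ipro[k]{s}{\vec{p}}$, and the hypotheses $\fdiv[k]{0}{\vec{i}}\leq\vec{p}\leq\fdiv[k]{0}{\vec{j}}$ translate, via the same iff, to $\ipro[k]{0}{\vec{p}}\leq\ipro[k]{s}{\fdiv[k]{0}{\vec{j}}}$ and $\ipro[k]{0}{\fdiv[k]{0}{\vec{i}}}\leq\ipro[k]{s}{\vec{p}}$, which combined with $\vec{i}\leq\ipro[k]{s}{\fdiv[k]{0}{\vec{i}}}$-type bounds forces $[\ipro[k]{0}{\vec{p}}:\ipro[k]{s}{\vec{p}}] \cap [\vec{i}:\vec{j}] \neq\emptyset$, producing the desired $\vec{m}$.

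I expect the main obstacle to be the surjectivity step in part (ii): one has to show that not only does $\fdiv[k]{0}{\cdot}$ respect the corners, but every integer point in the target box is genuinely hit by an $\vec{m}$ inside $[\vec{i}:\vec{j}]$. The cleanest route is to make this step purely a translation between the two equivalent forms of Lemma~\ref{l:ineq_iff}, so that no new arithmetic is introduced, and the whole proof remains a structural consequence of the preliminaries in Section~\ref{subsec:index_functions}.
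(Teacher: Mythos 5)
Your proof is correct and follows essentially the same route as the paper: part (i) via Definition~\ref{d:cell_ch}, the box-preserving property of Lemma~\ref{l:box_preserving1}, and induction on $k$; part (ii) by reducing to a single cell and using Lemma~\ref{l:ineq_iff} to show $\ch[-k]{I^{\l}_{\vec{m}}}$ is the singleton indexed by $\fdiv[k]{0}{\vec{m}}$. The only cosmetic difference is that you verify the surjectivity of $\vec{m}\mapsto\fdiv[k]{0}{\vec{m}}$ onto the target box by hand, whereas the paper absorbs that step into the box-preserving property of $F(\vec{n})=[\fdiv[k]{0}{\vec{n}}:\fdiv[k]{0}{\vec{n}}]$ from Lemma~\ref{l:box_preserving1}.
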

\begin{proof}
Let $k=1$ using Definition \ref{d:cell_ch} we have
$\ch{\calI^{\l}[\vec{i}:\vec{j}]} = 
\cup_{\vec n \in [\vec i: \vec j]} 
\calI^{\l+1}[\ipro{0}{\vec{n}}:\ipro{s}{\vec{n}}].$
Let $F(\vec{n}) = [\ipro{0}{\vec{n}}:\ipro{s}{\vec{n}}]$, from
Lemma \ref{l:box_preserving1} it follows that $F$ is box
preserving so we have that
$\ch{\calI^{\l}[\vec{i}:\vec{j}]} = 
\calI^{\l+1}[\ipro{0}{\vec i}:\ipro{s}{\vec j}]$.
For $k>1$ the proof follows by induction using the result we have just shown.

For (ii) observe that 
$\ch[-k]{\calI^{\l}[\vec{i}:\vec{j}]} =
\cup_{\vec n \in [\vec i: \vec j]}
\ch[-k]{I^{\l}_{\vec n}}$. From here we have that
$J\in\ch[-k]{\calI^{\l}[\vec{i}:\vec{j}]}$ if and only if
$J=I^{\l-k}_{\vec r}$ and there is $\vec n \in [\vec{i}:\vec{j}]$ such
that  $I^{\l}_{\vec n} \in \ch[k]{I^{\l-k}_{\vec r}}$.
And the latter happens if and only if
$\vec n \in [\ipro[k]{0}{\vec{r}}:\ipro[k]{s}{\vec{r}}]$.
From Lemma~\ref{l:ineq_iff} this is equivalent to 
$\vec{r} = \fdiv[k]{0}{\vec{n}}$.
Thus we conclude that 
$\ch[-k]{\calI^{\l}[\vec{i}:\vec{j}]}=
\cup_{\vec{r} \in [\fdiv[k]{0}{\vec{i}}:\fdiv[k]{0}{\vec{j}}]} I^{\l-k}_{\vec 
r} = \calI^{\l-k}[\fdiv[k]{0}{\vec{i}}:\fdiv[k]{0}{\vec{j}}]$.
\end{proof}

\subsection{Multilevel B-splines}
\label{Bsplines}
Let $V^{(\l,m,d)}(\Omega)$ be the space of tensor product splines of order $m$ 
globally $C^{m-2}$ subordinated to the $n$-adic partition of level \l of $\Omega$
(see Section~\ref{s:dd_spline}).
From this point on, $m$ and $d$ are fixed unless explicitly stated so we drop 
them from the notation and write for example $V^\l$ instead of 
$V^{(\l,m,d)}(\Omega)$. It is also convinient to introduce the number
$p:=m-1$ which is the \emph{degree} of the B--splines.

The ``master'' B--spline of order $m$ is
\begin{equation}
Q(x) := m \sum_{j=0}^m (-1)^j {m \choose j} (x-j)_+^{m-1}, 
\end{equation}
where $f_+$ stands for the positive part of $f$.
The function $Q$ is $C^{m-2}$, positive in $(0,m)$ with support equal to $[0,m]$. 
For $\l\geq 0$ and $i\in\Z$ let
$\vphi^\l_i(x) := Q(n^\l x - i)$ and if $\vec{i} \in \Z^d$ define
$\vphi^{\l}_{\vec{i}}(\vec x) := \Pi_{k=1}^d \vphi^\l_{i_k}(x_k)$.
With these definitions we make the 
B--splines sets clearly isomorphic to
integer lattices. 
More precisely, it can be shown \cite{Sch2007} that for each \l a normalized 
tensor product B-spline basis (of level $\l$) on $\Omega$ is
\begin{equation}\label{eq:Bl_def}
\calB^\l = \{ \vphi^\l_{\vec{i}} : 
\vec{i} \in [-p:\ipro[\l]{s}{0}] \}.
\end{equation}
\pedro{These are the so-called \emph{cardinal} B-splines, which correspond to extending the uniform partitions beyond the boundaries of $\Omega$.
If we considered the so-called \emph{interpolatory} B-splines, which correspond to the \emph{open-knot} vector, the only difference would be that not all the basis functions are dilations and translations of the same master B-spline $Q$, but of a finite number of master functions. All what follows remains valid.}
Trivially from the definition, the sets $\calB^\l$ are pairwise
disjoint. Let $\calB := \dcup_{\l=0}^{\infty} \calB^\l$ be
the set of \emph{multilevel B-splines}.
\begin{definition}[Level and index of a B-spline]
 Given $\vphi\in\calB$, the level of $\vphi$, denoted
 by $\l_{\vphi}$, and the index of $\vphi$, denoted
 by $\vec{i}_\vphi$, are the unique integer 
 and index, respectively, such that
 $\vphi^{\l_{\vphi}}_{\vec{i}_\vphi} = \vphi$.
\end{definition}

\begin{definition}[Box of B--splines]\label{d:box_spline}
 Given $\l\in\Z^+_0$, $\vec{i}$ and $\vec{j}$ in $\Z^d$ with 
 $\vec{i} \leq \vec{j}$ we define 
 $\calB^\l[\vec{i}:\vec{j}] :=  
 \{  \vphi^\l_{\vec{k}}: \vec{k}\in[\vec{i}:\vec{j}]\}$
 as the box of B--splines of level \l bounded by the corners 
 $\vec{i}$ and $\vec{j}$.
\end{definition}

\begin{definition}[Children of a B-spline]\label{d:bsp_ch}
Given $\vphi=\vphi^\l_{\vec{i}}\in\calB$ the set $\ch \vphi$ of children of $\vphi$ 
is defined as
$\ch{\vphi^\l_{\vec{i}}} := \calB^{\l+1}[\ipro{0}{\vec{i}}:\ipro{sm}{\vec{i}}]$.
Also, if $\calF \subset \calB$, we define 
$\ch{\calF} := \cup_{\vphi\in\calF}\ch{\vphi}$.
\end{definition}

This definition is motivated by the following Lemma.

\begin{lemma}[Children properties]\label{l:cp}
\begin{enumerate}[(i)]
\item \label{l:cp:ch_coef} There exist $c_{\vec{k}}\in\R^+$ for $\vec{k} 
\in [\vec 0:sm]$ such that 
$\vphi^\l_{\vec{i}}(x) = 
\sum_{\vec{k} \in [\vec 0:sm]} c_{\vec{k}} \vphi^{\l+1}_{n\vec{i}+\vec{k}}$, 
for every $\l\in\Z^+_0$ and $\vec i \in \Z$.

\item\label{l:cp:equiv} Let $\psi\in\calB^{\l}$ and 
$\psi = \sum_{\vphi\in\calB^{\l+1}} \alpha_{\vphi} \vphi$
be its unique expansion in $\calB^{\l+1}$
then $\vphi\in \ch{\psi}$ if and only if $\alpha_{\vphi}>0$.

\end{enumerate}
\end{lemma}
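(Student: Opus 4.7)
My plan is to handle part (i) first at the univariate level via the classical refinement equation, tensor it to dimension $d$, and then derive (ii) as a direct consequence using the uniqueness of expansions in the basis $\calB^{\l+1}$.

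\textbf{Part (i).} For the univariate master B-spline $Q$ of order $m$, a standard result from spline theory (see \cite{Sch2007}) gives an $n$-adic refinement equation of the form
\begin{equation*}
Q(x) = \sum_{k=0}^{sm} c_k\, Q(nx-k),
\end{equation*}
with coefficients $c_k > 0$ for every $k \in [0:sm]$. The support bound $k \in [0:sm]$ comes from insisting that each translate $Q(nx-k)$ be supported inside $[0,m]$: we need $k/n \geq 0$ and $(k+m)/n \leq m$, giving $0 \leq k \leq m(n-1)=sm$. Positivity of all $c_k$ is the classical content of the $n$-adic cardinal subdivision formula; I would just cite it. Rescaling by $x \mapsto n^\l x - i$ turns this into
\begin{equation*}
\vphi^\l_i(x) = \sum_{k=0}^{sm} c_k\, \vphi^{\l+1}_{ni+k}(x).
\end{equation*}
Now tensorizing, write $\vphi^\l_{\vec i}(\vec x) = \prod_{r=1}^d \vphi^\l_{i_r}(x_r)$, substitute the univariate refinement in each factor, and distribute the products of sums into a single sum over $\vec k \in [\vec 0:sm]$. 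This produces the desired identity with coefficients $c_{\vec k} := \prod_{r=1}^d c_{k_r} > 0$.

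\textbf{Part (ii).} Let $\psi = \vphi^\l_{\vec i}$ with $\vec i = \vec i_\psi$. The set $\calB^{\l+1}$ is a basis, so the expansion $\psi = \sum_{\vphi \in \calB^{\l+1}} \alpha_\vphi \vphi$ is unique. By part (i),
\begin{equation*}
\psi = \sum_{\vec k \in [\vec 0:sm]} c_{\vec k}\, \vphi^{\l+1}_{n\vec i+\vec k},
\end{equation*}
is such an expansion, hence $\alpha_{\vphi^{\l+1}_{n\vec i+\vec k}} = c_{\vec k}$ for $\vec k \in [\vec 0:sm]$ and $\alpha_\vphi = 0$ for every other $\vphi \in \calB^{\l+1}$. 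By Definition~\ref{d:bsp_ch}, $\ch{\psi} = \calB^{\l+1}[n\vec i:n\vec i + sm]$, i.e., exactly the set of indices $\{n\vec i + \vec k : \vec k \in [\vec 0:sm]\}$. Since every $c_{\vec k}$ is strictly positive, the condition $\alpha_\vphi > 0$ is equivalent to $\vphi \in \ch{\psi}$.

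\textbf{Main obstacle.} The only non-routine ingredient is the positivity of every coefficient $c_k$ in the univariate $n$-adic refinement; for the dyadic case this is the familiar $c_k = 2^{1-p}\binom{m}{k}$, and the $n$-adic generalization (sometimes expressed through iterated convolutions of indicator functions or through $n$-nomial coefficients) still yields strictly positive masks on the full support $[0:sm]$. I would invoke this as a known fact from \cite{Sch2007}, so the rest of the argument is bookkeeping with tensor products and basis uniqueness.
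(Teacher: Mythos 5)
Your proof is correct and follows essentially the same route as the paper: both reduce part (i) to a classical univariate fact about the positivity of the $n$-adic refinement mask of cardinal B-splines (the paper sketches it as an induction on the order starting from order $1$ via the standard recurrence, while you cite the subdivision formula directly), then tensorize, and obtain part (ii) from the linear independence of $\calB^{\l+1}$. No gaps.
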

\seba{%
\begin{proof}
Part \ref{l:cp:ch_coef} follows from the standard recurrence relation 
for B--splines of consecutive order (see \cite[p.90]{deB2001}) 
and the fact that the result holds for B--splines of order $1$.
Part~\ref{l:cp:equiv} follows easily from~\ref{l:cp:ch_coef} and the
fact that all the B--splines from a fixed level are linearly independent.
\end{proof}
}%

\pedro{
\begin{remark} This parent-children relationship holds with the same coefficients $c_{\vec{k}}$ for all functions of all levels in the case of cardinal B-splines. In the case of interpolatory B-splines there is a finite number of different situations, corresponding to the cases when the support of the involved basis functions touch the boundary of $\Omega$. But this is not essential for the discussion of this article.
\end{remark}
}

\begin{definition}[Descendants and ancestors]
    Given $\vphi$ in $\calB$ and $k\in\Z^+$ we define the set of its 
$k$-descendants
    as the set $\ch[k]{\vphi}$, resulting from $k$ successive applications of 
the children operator.
    We also define $k$-ancestors of $\vphi$ by 
    $\ch[-k]{\vphi}=\{\psi\in\calB: \vphi\in \ch[k]{\psi}\}$,
    and $\ch[0]{\vphi}=\{\vphi\}$.
    Finally, if $\calF \subset \calB$ then 
$\ch[k]{\calF}=\cup_{\vphi\in\calF}{\ch[k]{\vphi}}$.
\end{definition}

\begin{lemma}[Box ancestry]\label{l:bspline_hie}
    For $\l\geq0$, $\vec i,\vec j \in\Z^d$ and $k\in\Z^+$ we have that
    \begin{enumerate}[(i)]
        \item $\ch[k]{\calB^{\l}[\vec{i}:\vec{j}]} = 
        \calB^{\l+k}[\ipro[k]{0}{\vec{i}}:\ipro[k]{sm}{\vec{j}}]$
        \item $\ch[-k]{\calB^{\l}[\vec{i}:\vec{j}]} = 
        \calB^{\l-k}[\fdiv[k]{sp}{\vec{i}}:\fdiv[k]{0}{\vec{j}}]$   
    \end{enumerate}
\end{lemma}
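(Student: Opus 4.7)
The plan is to mimic exactly the proof of Lemma~\ref{l:cell_hie} (box ancestry for cells), substituting Definition~\ref{d:bsp_ch} for the children of a B-spline and tracking the new upper offset, which is $sm$ instead of $s$. Two things will need verification along the way: that the relevant box functions are box preserving via Lemma~\ref{l:box_preserving1}, and the identity $sm-(n-1)=sp$ which converts the natural offset that appears through Lemma~\ref{l:ineq_iff} into the $sp$ appearing in the statement.

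For part (i) I would start with $k=1$. By Definition~\ref{d:bsp_ch},
\[
\ch{\calB^{\l}[\vec{i}:\vec{j}]}=\bigcup_{\vec n\in[\vec i:\vec j]}\calB^{\l+1}[\ipro{0}{\vec n}:\ipro{sm}{\vec n}].
\]
Set $F(\vec n)=[\ipro{0}{\vec n}:\ipro{sm}{\vec n}]$. Since $m\ge1$ one has $sm\ge s=n-1$, so the hypothesis $m'-m\ge n-1$ of Lemma~\ref{l:box_preserving1} is satisfied with $m=0$, $m'=sm$, giving $F[[\vec i:\vec j]]=[\ipro{0}{\vec i}:\ipro{sm}{\vec j}]$. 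This yields the claim for $k=1$, and the general $k$ follows by induction, since one more application of the $k=1$ case replaces $[\ipro[k]{0}{\vec i}:\ipro[k]{sm}{\vec j}]$ with $[\ipro{0}{\ipro[k]{0}{\vec i}}:\ipro{sm}{\ipro[k]{sm}{\vec j}}]=[\ipro[k+1]{0}{\vec i}:\ipro[k+1]{sm}{\vec j}]$.

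For part (ii), $J\in\ch[-k]{\calB^{\l}[\vec i:\vec j]}$ iff there exist $\vec n\in[\vec i:\vec j]$ and $\vec r$ such that $J=\vphi^{\l-k}_{\vec r}$ and $\vphi^{\l}_{\vec n}\in\ch[k]{\vphi^{\l-k}_{\vec r}}$. By iterating part (i), the latter is equivalent to $\vec n\in[\ipro[k]{0}{\vec r}:\ipro[k]{sm}{\vec r}]$. Applying Lemma~\ref{l:ineq_iff} with $m=0$ and $m'=sm$ converts this to
\[
\fdiv[k]{sm-(n-1)}{\vec n}\le\vec r\le\fdiv[k]{0}{\vec n}.
\]
Since $sm-(n-1)=sm-s=s(m-1)=sp$, this becomes $\vec r\in[\fdiv[k]{sp}{\vec n}:\fdiv[k]{0}{\vec n}]$. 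Taking the union over $\vec n\in[\vec i:\vec j]$ and applying Lemma~\ref{l:box_preserving1} to the box function $\vec n\mapsto[\fdiv[k]{sp}{\vec n}:\fdiv[k]{0}{\vec n}]$ (whose hypothesis $m-m'\ge0$ holds with $m=sp\ge0$, $m'=0$) yields $\ch[-k]{\calB^{\l}[\vec i:\vec j]}=\calB^{\l-k}[\fdiv[k]{sp}{\vec i}:\fdiv[k]{0}{\vec j}]$.

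The main obstacle, as in the cells case, is the bookkeeping in part (ii): correctly identifying the two sides of Lemma~\ref{l:ineq_iff} and then recognizing that the shift $sm-(n-1)$ produced by that lemma simplifies to the degree-based quantity $sp$ announced in the statement. Once this arithmetic is in hand, the argument is a direct transcription of the proof of Lemma~\ref{l:cell_hie}.
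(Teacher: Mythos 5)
Your proposal is correct and follows essentially the same route as the paper: part (i) via Definition~\ref{d:bsp_ch} and the box-preserving property from Lemma~\ref{l:box_preserving1} plus induction on $k$, and part (ii) by reducing membership in $\ch[-k]{}$ to part (i), converting the inequalities with Lemma~\ref{l:ineq_iff}, simplifying $sm-(n-1)=sp$, and finishing with box preservation. The only difference is that you spell out the verification of the hypotheses of Lemma~\ref{l:box_preserving1} and the induction step slightly more explicitly than the paper does, which is harmless.
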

\begin{proof}
Let $k=1$, using Definition \ref{d:bsp_ch} we have
$\ch{\calB^{\l}[\vec{i}:\vec{j}]} = 
\cup_{\vec n \in [\vec i: \vec j]} 
\calB^{\l+1}[\ipro{0}{\vec{n}}:\ipro{sm}{\vec{n}}]$.
From Lemma \ref{l:box_preserving1} the function 
$F(\vec{n}) = [\ipro{0}{\vec{n}}:\ipro{sm}{\vec{n}}]$ is box
preserving, then it follows that 
$\ch{\calB^{\l}[\vec{i}:\vec{j}]} = 
\calB^{\l+1}[\ipro{0}{\vec i}:\ipro{sm}{\vec j}]$.
The rest of the proof follows by induction on $k$.

To show (ii) observe that 
$\ch[-k]{\calB^{\l}[\vec{i}:\vec{j}]} =
\cup_{\vec n \in [\vec i: \vec j]}
\ch[-k]{\vphi^{\l}_{\vec n}}$. From here we have that
$\vphi^{\l-k}_{\vec r}\in\ch[-k]{\calB^{\l}[\vec{i}:\vec{j}]}$ 
if and only if there is 
$\vec n \in [\vec{i}:\vec{j}]$ such
that  $\vphi^{\l}_{\vec n} \in \ch[k]{\vphi^{\l-k}_{\vec r}}$.
And from part (i) of the result, the latter happens if and only if
$\vec n \in [\ipro[k]{0}{\vec{r}}:\ipro[k]{sm}{\vec{r}}]$
which by Lemma~\ref{l:ineq_iff} is equivalent  to 
$\fdiv[k]{sm-s}{\vec{n}} \leq \vec{r} \leq \fdiv[k]{0}{\vec{n}}$.
We thus get
$\ch[-k]{\calB^{\l}[\vec{i}:\vec{j}]} =
\cup_{\vec n \in [\vec i: \vec j]} 
\calB^{\l-k}(\fdiv[k]{s(m-1)}{\vec{n}}:\fdiv[k]{0}{\vec{n}})$.
Now $F(\vec n) = (\fdiv[k]{s(m-1)}{\vec{n}}:\fdiv[k]{0}{\vec{n}})$
is a box preserving operator, thus
$\ch[-k]{\calB^{\l}[\vec{i}:\vec{j}]} =
\calB^{\l-k}(\fdiv[k]{s(m-1)}{\vec{i}}:\fdiv[k]{0}{\vec{j}})$.
\end{proof}  

For the complexity results it will be useful to have a notion of
a ``ball of functions'' which are related to a scaled 
comparison of the indexes as given in the following definition.
\begin{definition}[Oriented distance]
Let $\vphi_1$ and $\vphi_2$ in \calB, we define
$$\rho(\vphi_1, \vphi_2) := 
\frac{\vec{i}_{\vphi_1}}{n^{\l_{\vphi_1}-\l_{\vphi_2}}}-{\vec{i}_{\vphi_2}}.$$
\end{definition}

Observe that $\rho$ is not symmetric, in fact
$n^{\l_{\vphi_1}} \rho(\vphi_1, \vphi_2) = 
- n^{\l_{\vphi_2}} \rho(\vphi_2,\vphi_1)$. 
Moreover it satisfies the following analogous of 
the triangle inequality, whose 
proof is easily obtained by induction.
\begin{lemma}[Weighted triangular equality]\label{l:rho_triangle}
 Let $\vphi_0,\dots\vphi_j \in \calB$ then
 $$\rho(\vphi_j, \vphi_0) = \sum_{i=0}^{j-1} 
\frac{1}{n^{\l_{\vphi_i}-\l_{\vphi_0}}} \rho(\vphi_{i+1}, \vphi_i).$$
\end{lemma}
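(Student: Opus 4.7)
The plan is to unfold the definition of $\rho$ on the right-hand side and observe that, after the normalizing factor $n^{-(\l_{\vphi_i}-\l_{\vphi_0})}$ is pushed through, the sum telescopes. Concretely, for each $i$ the $i$-th summand expands as
\[
\frac{1}{n^{\l_{\vphi_i}-\l_{\vphi_0}}}\,\rho(\vphi_{i+1},\vphi_i)
= \frac{1}{n^{\l_{\vphi_i}-\l_{\vphi_0}}}\left(\frac{\vec{i}_{\vphi_{i+1}}}{n^{\l_{\vphi_{i+1}}-\l_{\vphi_i}}}-\vec{i}_{\vphi_i}\right)
= \frac{\vec{i}_{\vphi_{i+1}}}{n^{\l_{\vphi_{i+1}}-\l_{\vphi_0}}}-\frac{\vec{i}_{\vphi_i}}{n^{\l_{\vphi_i}-\l_{\vphi_0}}},
\]
because the two exponents of $n$ combine additively. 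Summing over $i=0,\dots,j-1$ collapses all intermediate contributions and leaves
\[
\frac{\vec{i}_{\vphi_j}}{n^{\l_{\vphi_j}-\l_{\vphi_0}}}-\frac{\vec{i}_{\vphi_0}}{n^{\l_{\vphi_0}-\l_{\vphi_0}}}
= \frac{\vec{i}_{\vphi_j}}{n^{\l_{\vphi_j}-\l_{\vphi_0}}}-\vec{i}_{\vphi_0}
= \rho(\vphi_j,\vphi_0),
\]
which is exactly the claimed identity.

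If one prefers to follow the induction route suggested in the statement, the base case $j=1$ is immediate from the definition of $\rho$, and the inductive step amounts to writing
\[
\rho(\vphi_{j+1},\vphi_0) = \frac{\vec{i}_{\vphi_{j+1}}}{n^{\l_{\vphi_{j+1}}-\l_{\vphi_0}}} - \vec{i}_{\vphi_0}
\]
and then adding and subtracting $\vec{i}_{\vphi_j}/n^{\l_{\vphi_j}-\l_{\vphi_0}}$; the first pair gives $n^{-(\l_{\vphi_j}-\l_{\vphi_0})}\rho(\vphi_{j+1},\vphi_j)$ and the remainder is $\rho(\vphi_j,\vphi_0)$, to which the inductive hypothesis applies.

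There is no real obstacle here: the identity is essentially the telescoping identity for the scaled indices $\vec{i}_{\vphi_i}/n^{\l_{\vphi_i}-\l_{\vphi_0}}$. The only mild subtlety is keeping track of the exponent bookkeeping $\l_{\vphi_i}-\l_{\vphi_0}$ versus $\l_{\vphi_{i+1}}-\l_{\vphi_i}$, but this is purely algebraic and does not use any structural property of $\calB$ beyond the definition of $\rho$.
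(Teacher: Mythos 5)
Your proof is correct, and it matches the paper's intended argument: the paper only remarks that the identity ``is easily obtained by induction,'' and your telescoping computation (equivalently, your inductive step of adding and subtracting $\vec{i}_{\vphi_j}/n^{\l_{\vphi_j}-\l_{\vphi_0}}$) is precisely that argument written out. The exponent bookkeeping $\l_{\vphi_{i+1}}-\l_{\vphi_i}+\l_{\vphi_i}-\l_{\vphi_0}=\l_{\vphi_{i+1}}-\l_{\vphi_0}$ checks out, so nothing is missing.
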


We can use $\rho$ to express the descendants of a B--spline
in the sense of the next Lemma. This will be useful when analyzing implications of the refinement algorithm in Section~\ref{S:refinement with gap constraint}.
\begin{lemma}[\seba{Distance to descendants}]\label{l:desc_rho}
Let $\eta\in\calB^\ell$ and $k\in\Z^+$, then
\begin{equation*}
\begin{split}
\ch[k]{\eta} &= \{\psi\in\calB^{\l+k} : 0\leq \rho(\psi,\eta)\leq m 
(1-\tfrac{1}{n^k}) \} \\
&= \{\psi\in\calB^{\l+k} : - m(n^k-1) \le \rho(\eta,\psi)\le 0\}
\\
&\subset \{\psi\in\calB^{\l+k} : 0\leq \rho(\psi,\eta)\leq m \}
= \{\psi\in\calB^{\l+k} : -mn^k \le  \rho(\eta,\psi)\leq 0\}
.
\end{split}
\end{equation*}
\end{lemma}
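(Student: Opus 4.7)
The plan is to reduce each of the four descriptions to a statement about the multi-index of $\psi\in\calB^{\l+k}$, then verify that these statements coincide (or are contained one in another) by routine arithmetic.

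First I would apply Lemma~\ref{l:bspline_hie}(i) to the singleton $\{\eta\}=\calB^{\l}[\vec i_\eta:\vec i_\eta]$ to get
\[
\ch[k]{\eta} = \calB^{\l+k}\bigl[\ipro[k]{0}{\vec i_\eta}:\ipro[k]{sm}{\vec i_\eta}\bigr].
\]
Next I would compute the iterated product map: a short induction on $k$ (or a geometric series) gives
\[
\ipro[k]{0}{\vec i_\eta} = n^{k}\vec i_\eta, \qquad \ipro[k]{sm}{\vec i_\eta} = n^{k}\vec i_\eta + sm\,\tfrac{n^{k}-1}{n-1} = n^{k}\vec i_\eta + m(n^{k}-1),
\]
using that $s = n-1$. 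Thus $\psi=\vphi^{\l+k}_{\vec j}\in\ch[k]{\eta}$ is equivalent to
\[
0 \le \vec j - n^{k}\vec i_\eta \le m(n^{k}-1).
\]

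Now I would translate this to $\rho$. Directly from the definition,
\[
\rho(\psi,\eta) = \frac{\vec j}{n^{k}} - \vec i_\eta, \qquad \rho(\eta,\psi) = n^{k}\vec i_\eta - \vec j,
\]
so dividing the inequality by $n^{k}$ gives $0\le\rho(\psi,\eta)\le m(1-1/n^{k})$, which is the first equality. Multiplying the inequality by $-1$ gives $-m(n^{k}-1)\le\rho(\eta,\psi)\le 0$, which is the second equality; equivalently, this reflects the identity $n^{\l+k}\rho(\psi,\eta) = -n^{\l}\rho(\eta,\psi)$ noted just after the definition of $\rho$.

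Finally, the inclusion and the last equality are immediate weakenings: since $m(1-1/n^{k})<m$, enlarging the upper bound to $m$ gives the outer set, and the same sign-flip identity relating $\rho(\psi,\eta)$ and $\rho(\eta,\psi)$ converts $0\le\rho(\psi,\eta)\le m$ into $-mn^{k}\le\rho(\eta,\psi)\le 0$. The only step that requires any care is the computation of $\ipro[k]{sm}{\vec i_\eta}$, and specifically the cancellation $sm\,(n^{k}-1)/(n-1)=m(n^{k}-1)$, which is what makes the cleaner bound $m(1-1/n^{k})$ emerge; once this is in hand, the rest is bookkeeping.
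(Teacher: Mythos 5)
Your proposal is correct and follows essentially the same route as the paper's own proof: both invoke Lemma~\ref{l:bspline_hie} to write $\ch[k]{\eta}$ as a box of B-splines, compute $\ipro[k]{0}{\vec i_\eta}=n^k\vec i_\eta$ and $\ipro[k]{sm}{\vec i_\eta}-\ipro[k]{0}{\vec i_\eta}=m(n^k-1)$, and divide by $n^k$ to obtain the $\rho$-form. The only difference is that you spell out the sign-flip identity $n^{\l_\psi}\rho(\psi,\eta)=-n^{\l_\eta}\rho(\eta,\psi)$ to get the second and fourth descriptions explicitly, which the paper leaves implicit in ``the assertion follows.''
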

\begin{proof}
	From Lemma~\ref{l:bspline_hie}, $\psi \in \ch[k]{\eta}$ iff $\psi \in 
\calB^{\ell+k}(M_0^k(\vec i_\eta):M_{sm}^k(\vec i_\eta))$, i.e., 
	$M_0^k(\vec i_\eta) \le \vec i_\psi \le M_{sm}^k(\vec i_\eta)$, or
	$ 0 \le \vec i_\psi - M_0^k(\vec i_\eta) \le M_{sm}^k(\vec i_\eta) - 
M_0^k(\vec i_\eta)$.
	Since $M_0^k(\vec i_\eta) = n^k \vec i_\eta$ and $M_{sm}^k(\vec i_\eta) - 
M_0^k(\vec i_\eta) = m (n^k-1)$, we have, after dividing by $n^k$, that $\psi 
\in \ch[k]{\eta}$ iff
	\[
	0 \le \frac{\vec i_\psi}{n^k} - \vec i_\eta \le m ( 1 - \frac{1}{n^k}) < m,
	\]
	and the assertion follows.
\end{proof}

\begin{definition}[Ball of functions]\label{d:ballofsplines}
Let $\vphi \in \calB$, $D\in\R^+$ and $k\in\Z$ then we 
define
$\ball{\vphi}{D}{k} :=
\{\psi\in\calB^{\l_\vphi+k} :
|\rho(\vphi,\psi)|\leq D \}$
\end{definition}

\begin{lemma}[Uniform bound by level]\label{l:radius}
Let $\vphi \in \calB$, $D\in\R^+$ and $k\in\Z$ then
$\#\ball{\vphi}{D}{k} \leq (2D+1)^d$.
\end{lemma}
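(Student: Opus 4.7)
The plan is to unwind the definition of $\rho$ and of the ball and then do a direct integer-count argument, one coordinate at a time.

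First I would translate the condition $\psi\in\ball{\vphi}{D}{k}$ into a constraint on the multi-index $\vec{i}_\psi$. Since $\psi\in\calB^{\l_\vphi+k}$, we have $\l_\vphi-\l_\psi=-k$, so the definition of oriented distance gives
\[
\rho(\vphi,\psi) \;=\; \frac{\vec{i}_\vphi}{n^{-k}} - \vec{i}_\psi \;=\; n^k\,\vec{i}_\vphi - \vec{i}_\psi,
\]
where the quotient and absolute value are understood componentwise per the conventions set in Section~\ref{s:preliminaries}. Hence $|\rho(\vphi,\psi)|\le D$ is equivalent to $\vec{i}_\psi$ lying in the axis-aligned box $\bigl[n^k\vec{i}_\vphi-D\,:\,n^k\vec{i}_\vphi+D\bigr]$ interpreted componentwise in $\R^d$.

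Second, I would bound the number of integer points in that box. In each coordinate, $(\vec{i}_\psi)_j$ is an integer lying in a closed real interval of length $2D$, and the number of integers in such an interval is at most $\lfloor 2D\rfloor+1\le 2D+1$. Since $\psi$ is determined by $\vec{i}_\psi$ (and the fixed level $\l_\vphi+k$), and the coordinates of $\vec{i}_\psi$ vary independently, the total count is bounded by the product over the $d$ coordinates, giving
\[
\#\ball{\vphi}{D}{k} \;\le\; (2D+1)^d.
\]

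There is no real obstacle; the only subtlety is remembering that $n^k\vec{i}_\vphi$ need not be an integer when $k<0$, but this is harmless because the count of integers in a real interval of length $2D$ is still at most $2D+1$ regardless of where the endpoints sit. One should also note that the basis functions of level $\l_\vphi+k$ are in bijection with their indices (see \eqref{eq:Bl_def}), so counting indices is the same as counting functions (indices outside the admissible range of $\calB^{\l_\vphi+k}$ would only make the bound smaller, which is fine).
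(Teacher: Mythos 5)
Your proposal is correct and follows essentially the same route as the paper's proof: unwind the definition of $\rho$ to reduce the count to integer lattice points in the box $[n^k\vec{i}_\vphi-D:n^k\vec{i}_\vphi+D]$, then bound each coordinate by $2D+1$ and take the product over the $d$ coordinates. Your remark about $n^k\vec{i}_\vphi$ possibly being non-integer for $k<0$ is a small extra point of care that the paper leaves implicit.
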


\begin{proof}
By definition
\begin{align*}
\#\ball{\vphi}{D}{k} 
&= \# \left\{ \psi \in \calB^{\ell_\vphi+k} : 
\left| \frac{\vec i_\vphi}{n^{\ell_\vphi-\ell_\psi}} - \vec i_\psi \right|
   \le D \right\}
\\
&\leq \#\left\{  \vec i \in \Z^d : 
\left| \frac{\vec i_\vphi}{n^{-k}}- \vec i \right|
\le D \right\}
\\
&\le \# \left\{ \vec i \in \Z^d : -D + \vec i_\vphi n^k \le \vec i \le D + \vec 
i_\vphi n^k \right\},
\end{align*}
which immediately implies the claim.
\end{proof}

\subsection{B--splines overlapping}
This section considers the overlapping among B--splines of different levels.
\begin{definition}[Cells supporting a B-spline]\label{d:cellsupp}
    Given $\vphi_{\vec i}^\l\in\calB$, we define the set
    $\bbI(\vphi_{\vec i}^\l) := 
    \calI^\l[\vec i:\vec i + p]$
    as its cell support.
\end{definition}


This definition is motivated from the fact that $I\in\bbI(\vphi)$ if and only if $ \vphi>0$ in $\mathring I$.
Moreover $\supp \vphi_{\vec i}^\ell = \overline{\cup_{\vec 
k \in [\vec i: \vec i+p]} I_{\vec k}^\ell}$.
The proof of this fact is immediate from the definition of $\vphi_{\vec i}^\l$ (see beginning of Section~\ref{Bsplines}).

We insist on working algebraically with sets of indices, 
because it is directly translated into the implementation, and easier to check 
for correctness.

\begin{definition}[Cells overlapping a B-spline]\label{d:celloverlap}
    Given $k\in\Z$ and $\vphi\in\calB$ we define
    $\bbI^k(\vphi) := \ch[k]{\bbI(\vphi)}$. If
    $\calF\subset\calB$ then
    $\bbI^k(\calF) := \cup_{\vphi\in\calF} \bbI^k(\vphi)$.
\end{definition}

Note that $\bbI^k(\varphi) \subset \calI^{\ell_\vphi+k}$ and in the case of 
maximum regularity splines, $\bbI^k(\vphi)$ is the set of cells of level 
$k+\ell_\vphi$ which overlap with the support of $\vphi$.
Note also that the \emph{children} operator $\ch[]{}$ is defined from 
$\calB$ to $\calB$ (and from $\calL$ to $\calL$, see Section 
\ref{s:generators}). In the next lemma we explore on 
this.
	
	\begin{lemma}[Interchange of $\bbI$ and $\ch{}$]\label{L:children}
		For $\vphi \in \calB$ and $k \in \Z^+_0$ we have
		\begin{enumerate}[(i)]
			\item $\bbI(\ch[k]{\vphi}) = \ch[k]{\bbI(\vphi)} = \bbI^k(\vphi)$
			\item $\bbI(\ch[-k]{\vphi}) \supset \ch[-k]{\bbI(\vphi)} = 
\bbI^{-k}(\vphi)$
		\end{enumerate}
	\end{lemma}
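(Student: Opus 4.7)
My plan is to reduce both parts of the lemma to explicit computations on boxes of cells, invoking Lemmas~\ref{l:bspline_hie} and~\ref{l:cell_hie} and the box-preserving machinery from Section~\ref{subsec:index_functions}. In both parts the second equality is nothing but Definition~\ref{d:celloverlap}, so the real content is the first equality in (i) and the inclusion in (ii). Throughout I would write $\vphi = \vphi^\l_{\vec i}$ and use the elementary observation that the map $\vec j \mapsto [\vec j:\vec j + p]$ is box preserving (this is not of the $[\ipro[k]{m}{\cdot}:\ipro[k]{m'}{\cdot}]$ form of Lemma~\ref{l:box_preserving1}, but the identity $\bigcup_{\vec j\in[\vec a:\vec b]}[\vec j:\vec j+p] = [\vec a:\vec b+p]$ is immediate).

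For part (i), Lemma~\ref{l:bspline_hie}(i) gives $\ch[k]{\vphi} = \calB^{\l+k}[\ipro[k]{0}{\vec i}:\ipro[k]{sm}{\vec i}]$; applying Definition~\ref{d:cellsupp} term by term and using the box-preserving observation above yields
\[
\bbI(\ch[k]{\vphi}) = \calI^{\l+k}[\ipro[k]{0}{\vec i}:\ipro[k]{sm}{\vec i} + p].
\]
On the other side, Lemma~\ref{l:cell_hie}(i) applied to $\bbI(\vphi) = \calI^\l[\vec i:\vec i+p]$ produces $\ch[k]{\bbI(\vphi)} = \calI^{\l+k}[\ipro[k]{0}{\vec i}:\ipro[k]{s}{\vec i+p}]$. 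Equality of the two boxes then reduces to the single numerical identity $\ipro[k]{sm}{\vec i}+p = \ipro[k]{s}{\vec i+p}$, which I would verify by direct expansion using the closed form $\ipro[k]{m}{\vec i} = n^k\vec i + m\frac{n^k-1}{n-1}$ together with $s = n-1$ and $m = p+1$; both sides collapse to $n^k\vec i + pn^k + n^k - 1$.

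For part (ii), Lemmas~\ref{l:bspline_hie}(ii) and~\ref{l:cell_hie}(ii) give
\[
\bbI(\ch[-k]{\vphi}) = \calI^{\l-k}[\fdiv[k]{sp}{\vec i}:\fdiv[k]{0}{\vec i}+p],
\qquad
\ch[-k]{\bbI(\vphi)} = \calI^{\l-k}[\fdiv[k]{0}{\vec i}:\fdiv[k]{0}{\vec i+p}],
\]
after applying the box-preserving observation once more on the left-hand side. Inclusion of the right box in the left box reduces to two coordinate-wise inequalities: $\fdiv[k]{sp}{\vec i} \le \fdiv[k]{0}{\vec i}$ and $\fdiv[k]{0}{\vec i+p} \le \fdiv[k]{0}{\vec i} + p$. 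The first follows because $\operatorname{D}_m(\vec j)$ is monotone nonincreasing in $m$ and monotone nondecreasing in $\vec j$, so the inequality propagates through $k$ iterations. The second I would prove by induction on $k$: the base case $\lfloor(\vec i+p)/n\rfloor \le \lfloor \vec i/n\rfloor + p$ follows by writing $\vec i = qn + r$ with $0\le r \le n-1$ and checking $\lfloor(r+p)/n\rfloor \le p$ (trivial for $p=0$, and for $p\ge 1$ one uses $r+p \le n-1+p \le np$ since $n\ge 2$); the inductive step just composes with monotonicity of $\operatorname{D}_0$.

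The main obstacle is not conceptual but book-keeping: matching the two corner indices in part (i) and verifying the two floor inequalities in part (ii). The asymmetry between parts (i) and (ii) is exactly explained by the second of these inequalities failing to be an equality in general, which is why ancestry only yields an inclusion.
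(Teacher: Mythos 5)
Your proof is correct and follows essentially the same route as the paper's: both reduce each part to an explicit comparison of cell boxes via Lemmas~\ref{l:bspline_hie} and~\ref{l:cell_hie}, with (i) hinging on the identity $\ipro[k]{sm}{\vec i}+p = \ipro[k]{s}{\vec i+p}$ and (ii) on the inequality $\fdiv[k]{0}{\vec i+p}\le\fdiv[k]{0}{\vec i}+p$. The only differences are cosmetic — you verify the identity in (i) by direct expansion of the closed form rather than by induction, and you spell out the floor inequalities in (ii) (including the lower corner $\fdiv[k]{sp}{\vec i}\le\fdiv[k]{0}{\vec i}$) that the paper leaves implicit.
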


\begin{proof}
	Let $\vphi = \vphi_{\vec i}^\ell \in \calB$ and $k \in \Z_0^+$. 
	On the one hand, from Lemma~\ref{l:bspline_hie}, 
	$\ch[k]{\vphi_{\vec i}^\ell}
	 = \calB^{\ell+k}[\ipro[k]{0}{\vec i}:\ipro[k]{sm}{\vec i}]$, 
	so that Definition~\ref{d:cellsupp},
	\[
	\bbI\left(\ch[k]{\vphi_{\vec i}^\ell} \right)
	= \bbI\left[ \calB^{\ell+k}[\ipro[k]{0}{\vec i}:\ipro[k]{sm}{\vec i}] \right]	
	= \calI^{\ell+k}\left[\ipro[k]{0}{\vec i}: \ipro[k]{sm}{i}+p\right].
	\]
	On the other hand, due to Lemma~\ref{l:cell_hie},
	\[
	\ch[k]{\bbI(\vphi_{\vec i}^\ell)}
	=\ch[k]{\calI^\ell[\vec i: \vec i+p]} 
	= \calI^{\ell+k}\left[\ipro[k]{0}{\vec i}: \ipro[k]{s}{\vec i+p}\right],
	\]
	and~(i) follows from the fact that $\ipro[k]{sm}{\vec i} + p = 
\ipro[k]{s}{\vec i+p}$ as can be proved by induction on $k$, using that $n=s+1$ 
and $m=p+1$.
	
	In order to prove~(ii), observe first that, again from 
Lemma~\ref{l:bspline_hie} and Definition~\ref{d:cellsupp}
	\[
	\bbI\left(\ch[-k]{\vphi_{\vec i}^\ell} \right)
	= \bbI\left(\calB^{\ell-k}\left[\fdiv[k]{sp}{\vec i}:\fdiv[k]{0}{\vec 
i}\right]\right)
	= \calI^{\ell-k}\left[\fdiv[k]{sp}{\vec i}:\fdiv[k]{0}{\vec i}+p\right].
	\]
	Besides, 
	\[
	\ch[-k]{\bbI(\vphi_i^\ell)}
	= \ch[-k]{\calI^\ell[\vec i:\vec i+p]} 
	= \calI^{\ell-k}\left[\fdiv[k]{0}{\vec i}:\fdiv[k]{0}{\vec i + p}\right].
	\]
	Thus, (ii) follows from the fact that $ \fdiv[k]{0}{\vec i + p}\le 
\fdiv[k]{0}{\vec i} + p$.
\end{proof}

\begin{definition}[B-splines overlapping a cell]\label{d:splinesoverlap}
    Given $k\in\Z$ and $I\in\calI$ we define
    $\bbB^k(I) := \{\vphi \in \calB : I\in \bbI^{-k}(\vphi)\}$. If 
$\calJ\subset\calI$ then
    $\bbB^k(\calJ) := \cup_{I\in\calJ} \bbB^k(I)$.
\end{definition}

This last definition is reciprocal to the previous one, $\bbB^k(I) 
\subset \calB^{\ell_I+k}$ is the set of B-splines $\vphi$ of level $\ell_I+k$ 
whose support overlaps with $I$. We immediately obtain the following.

\begin{lemma}\label{l:p1}
	Let $I \in \calI$, $\vphi \in \calB$ and $k \in \Z$. Then $\vphi\in\bbB^k(I)$ if and only if $\bbI^k(\vphi)\cap\ch[k]{I}\not=\emptyset$.
\end{lemma}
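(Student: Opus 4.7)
The plan is a direct unfolding of Definitions~\ref{d:celloverlap} and~\ref{d:splinesoverlap}, together with the duality between the $k$-descendant operator $\ch[k]$ and the $k$-ancestor operator $\ch[-k]$ on cells.

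First I would rewrite the left-hand side. By Definition~\ref{d:splinesoverlap}, $\vphi\in\bbB^k(I)$ is tautologically the statement $I\in\bbI^{-k}(\vphi)$. Unfolding $\bbI^{-k}(\vphi)$ by Definition~\ref{d:celloverlap} and distributivity of $\ch[-k]$ over unions gives
$\bbI^{-k}(\vphi)=\ch[-k]{\bbI(\vphi)}=\bigcup_{J\in\bbI(\vphi)}\ch[-k]{J}$.
So the LHS is equivalent to the existence of some $J\in\bbI(\vphi)$ with $I\in\ch[-k]{J}$, and by the very definition of $k$-ancestor as the inverse of the $k$-descendant relation on cells, this is equivalent to $J\in\ch[k]{I}$ for some $J\in\bbI(\vphi)$.

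Next I would rewrite the right-hand side in parallel. Again by Definition~\ref{d:celloverlap} and distributivity, $\bbI^k(\vphi)=\ch[k]{\bbI(\vphi)}=\bigcup_{J\in\bbI(\vphi)}\ch[k]{J}$, and therefore $\bbI^k(\vphi)\cap\ch[k]{I}\neq\emptyset$ is equivalent to the existence of $J\in\bbI(\vphi)$ for which $\ch[k]{J}\cap\ch[k]{I}\neq\emptyset$.

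The lemma then collapses to the elementary cell-level equivalence: for cells $J,I\in\calI$ at the relevant levels, $J\in\ch[k]{I}$ iff $\ch[k]{J}\cap\ch[k]{I}\neq\emptyset$. The forward direction uses Remark~\ref{R:cell_ch} (which gives $J\subset I$ whenever $J\in\ch[k]{I}$) together with non-emptiness of $\ch[k]{J}$, so that a descendant of $J$ also appears as a descendant of some cell of $\ch[k]{I}$. The reverse direction appeals to the fact that cells of a common level are either equal or disjoint (as a consequence of $\calI=\dcup_{\l=0}^{\infty}\calI^\l$ and Definition~\ref{d:box_cells}), so a common element of the two descendant sets pins down $J$ as the unique member of $\ch[k]{I}$ sitting above it. The main obstacle, though small, is keeping the levels aligned throughout (identifying $\ell_\vphi=\ell_I+k$ on the LHS via the containment $\bbB^k(I)\subset\calB^{\ell_I+k}$ noted after Definition~\ref{d:splinesoverlap}) and tracking the iterated partition property through $\ch[k]$.
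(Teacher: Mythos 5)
Your first two reduction steps are sound, but the bridge between them --- the claimed ``elementary cell-level equivalence'' that $J\in\ch[k]{I}$ iff $\ch[k]{J}\cap\ch[k]{I}\neq\emptyset$ --- is false whenever $k\neq 0$. If $J\in\ch[k]{I}$ then $\ell_J=\ell_I+k$, so $\ch[k]{J}\subset\calI^{\ell_I+2k}$ while $\ch[k]{I}\subset\calI^{\ell_I+k}$: the two sets of cells live at different levels and their intersection is empty. A $k$-descendant of $J$ is an element of $\ch[2k]{I}$, not of $\ch[k]{I}$, so ``a descendant of $J$ also appears as a descendant of some cell of $\ch[k]{I}$'' does not place it \emph{inside} $\ch[k]{I}$. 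The same level count shows that the statement you are trying to reach cannot hold as printed: $\bbI^k(\vphi)\cap\ch[k]{I}\neq\emptyset$ forces $\ell_\vphi+k=\ell_I+k$, i.e.\ $\ell_\vphi=\ell_I$, whereas $\vphi\in\bbB^k(I)$ forces $\ell_\vphi=\ell_I+k$; for $k\neq0$ the two sides are never simultaneously nonvacuous for the same $\vphi$.

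What your first paragraph establishes --- correctly and completely --- is that $\vphi\in\bbB^k(I)$ iff $\bbI(\vphi)\cap\ch[k]{I}\neq\emptyset$: unfolding Definitions~\ref{d:splinesoverlap} and~\ref{d:celloverlap}, $\vphi\in\bbB^k(I)$ iff $I\in\bbI^{-k}(\vphi)=\cup_{J\in\bbI(\vphi)}\ch[-k]{J}$, iff some $J\in\bbI(\vphi)$ satisfies $I\in\ch[-k]{J}$, iff some such $J$ lies in $\ch[k]{I}$. That version (with $\bbI(\vphi)$, not $\bbI^k(\vphi)$) is exactly the one the paper invokes later, in the proof of Lemma~\ref{l:H_L_props:H_overlap_by_L1}, where the hypothesis checked is $\bbI(\vphi)\cap\ch[k]{\calA^\l}=\emptyset$; the exponent on $\bbI$ in the printed statement appears to be a typo, and the paper's own ``proof'' is precisely this one-line unfolding. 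So the repair is not to fix your third step but to delete it: stop after the first reduction and do not attempt to trade $\bbI(\vphi)$ for $\bbI^k(\vphi)$.
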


Note that $\bbI$ and $\bbB$ map B-splines into cells and viceversa, so $\bbI^0$, 
$\bbB^0$ cannot be the identity operators, instead, $\bbI^0 = \bbI$ and 
$\bbB^0=\bbB$.

\begin{lemma}[Cells overlapping a box of B-splines]\label{l:cellbspline}
Let $k\in\Z^+$ then
\begin{enumerate}[(i)]
\item $\bbI^k[\calB^\l[\vec i: \vec j]]=\calI^{k+\l}[\ipro[k]{0}{\vec 
i}:\ipro[k]{s}{\vec j + p}]$
\item\label{l:cellbspline:2} $\bbI^{-k}[\calB^\l[\vec i: \vec j]]=
    \calI^{-k+\l}[\fdiv[k]{0}{\vec i}:\fdiv[k]{0}{\vec j + p}]$
\end{enumerate}
\end{lemma}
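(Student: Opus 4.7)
The plan is to reduce both parts to a single preliminary computation of $\bbI[\calB^\l[\vec i:\vec j]]$ (the $k=0$ case), and then apply the cell box ancestry Lemma~\ref{l:cell_hie} to pass to level $\l \pm k$. This factorization is natural because Definition~\ref{d:celloverlap} sets $\bbI^k = \ch[k]{\bbI(\cdot)}$, so we can always push the $\bbI$ through first and the $\ch[k]{\cdot}$ (or $\ch[-k]{\cdot}$) afterwards.

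The key preliminary step is to show
\[
\bbI[\calB^\l[\vec i:\vec j]] = \calI^\l[\vec i:\vec j+p].
\]
By Definition~\ref{d:cellsupp} the left-hand side equals $\cup_{\vec n\in[\vec i:\vec j]} \calI^\l[\vec n:\vec n+p]$. The inclusion ``$\subset$'' is immediate componentwise. For ``$\supset$'', given $\vec k\in[\vec i:\vec j+p]$, I would pick $\vec n = \max(\vec i, \vec k - p)$ componentwise and check the four inequalities $\vec i \le \vec n \le \vec j$ and $\vec n \le \vec k \le \vec n + p$; these are routine, using $\vec i\le\vec j$ and $\vec k\le \vec j + p$.

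For (i), using Definition~\ref{d:celloverlap} I write
\[
\bbI^k[\calB^\l[\vec i:\vec j]]
= \ch[k]{\bbI[\calB^\l[\vec i:\vec j]]}
= \ch[k]{\calI^\l[\vec i:\vec j+p]}
= \calI^{\l+k}[\ipro[k]{0}{\vec i}:\ipro[k]{s}{\vec j+p}],
\]
where the last equality is Lemma~\ref{l:cell_hie}(i). For (ii) the identical strategy works, now using Lemma~\ref{l:cell_hie}(ii):
\[
\bbI^{-k}[\calB^\l[\vec i:\vec j]]
= \ch[-k]{\calI^\l[\vec i:\vec j+p]}
= \calI^{\l-k}[\fdiv[k]{0}{\vec i}:\fdiv[k]{0}{\vec j+p}].
\]

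The main (though mild) obstacle is the preliminary union identity: naively one might try to apply a box-preserving argument to $F(\vec n) = [\vec n:\vec n+p]$ directly, but $F$ is not of the form handled by Lemma~\ref{l:box_preserving1}. Handling it by the explicit componentwise choice of $\vec n$ above avoids this. I note that going the other way round — first taking $\ch[\pm k]{}$ of the box of B-splines via Lemma~\ref{l:bspline_hie} and then applying $\bbI$ — works cleanly for (i) but fails for (ii), because Lemma~\ref{L:children}(ii) gives only an inclusion $\bbI(\ch[-k]{\vphi}) \supset \ch[-k]{\bbI(\vphi)}$. This is another reason to factor as $\ch[\pm k]\circ\bbI$ rather than $\bbI\circ\ch[\pm k]$.
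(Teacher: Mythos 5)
Your proof is correct and follows essentially the same route as the paper's: both factor $\bbI^{\pm k}$ as $\ch[\pm k]{}\circ\bbI$ via Definition~\ref{d:celloverlap} and then invoke Lemma~\ref{l:cell_hie}. The only (harmless) difference is that you collapse the union over $\vec n$ into the single box $\calI^\l[\vec i:\vec j+p]$ at level $\l$ before applying $\ch[\pm k]{}$, by an explicit componentwise argument, whereas the paper applies $\ch[\pm k]{}$ to each $\calI^\l[\vec n:\vec n+p]$ first and collapses the union afterwards, implicitly relying on box-preservation of the resulting maps.
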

\begin{proof}
From Definitions~\ref{d:celloverlap} and~\ref{d:cellsupp} we have, for $\vec i 
\le \vec n \le \vec j$ and $k\in\Z$
\[
\bbI^k(\vphi_{\vec n}^\ell) = \ch[k]\bbI(\vphi_{\vec n}^\ell) = 
\ch[k]\calI^\ell[\vec n:\vec n+p].
\]
If $k > 0$, Lemma~\ref{l:cell_hie}~(i) yields
\[
\bbI^k(\vphi_{\vec n}^\ell) = \calI^{\ell+k}[\ipro[k]{0}{\vec 
n}:\ipro[k]{s}{\vec n+p}],
\]
and for $k < 0$, Lemma~\ref{l:cell_hie}~(ii) leads to
\[
\bbI^k(\vphi_{\vec n}^\ell) = \calI^{\ell+k}[\fdiv[k]{0}{\vec 
n}:\fdiv[k]{0}{\vec n+p}],
\]
and the assertions follow.
\end{proof}

\begin{lemma}[B-splines overlapping a box of cells]\label{l:bsplinecell}
Let $\l\in\Z^+_0$ and $\vec i, \vec j \in \Z^d$ then
\begin{enumerate}[(i)]
\item $\bbB^0[\calI^\l[\vec i: \vec j]]=\calB^{\l}[{\vec i} - p:{\vec j}]$
\item $\bbB^k[\calI^\l[\vec i: \vec j]]=
    \calB^{\l+k}[\ipro[k]{0}{\vec i} - p:\ipro[k]{s}{\vec j}]$,
for $k\in\Z^+$
\item\label{l:bsplinecell:3} $\bbB^{-k}[\calI^\l[\vec i: \vec j]]=
    \calB^{\l-k}[\fdiv[k]{0}{\vec i} - p:\fdiv[k]{0}{\vec j}]$,
for $k\in\Z^+$.
\end{enumerate}
\end{lemma}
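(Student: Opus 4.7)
The plan is to reduce each of the three identities to a single-cell calculation and then assemble by taking the union over the corner range, exactly as done in the proofs of Lemmas~\ref{l:cell_hie} and~\ref{l:bspline_hie}. Writing
\[
\bbB^k[\calI^\ell[\vec i:\vec j]] = \bigcup_{\vec n \in [\vec i:\vec j]} \bbB^k(I_{\vec n}^\ell),
\]
it suffices to express each $\bbB^k(I_{\vec n}^\ell)$ as a box of B--splines whose corners depend on $\vec n$ through a box-preserving index map, and then invoke box-preservation to collapse the union into the claimed boxes.

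For the one-cell calculation, Definition~\ref{d:splinesoverlap} forces $\vphi_{\vec c}^{\ell'} \in \bbB^k(I_{\vec n}^\ell)$ only when $\ell' = \ell + k$, and reduces the condition to $I_{\vec n}^\ell \in \bbI^{-k}(\vphi_{\vec c}^{\ell+k})$ (or $I_{\vec n}^\ell \in \bbI^{k}(\vphi_{\vec c}^{\ell-k})$ in case (iii)). Combining $\bbI^{\pm k}(\vphi) = \ch[\pm k]\bbI(\vphi)$ with Definition~\ref{d:cellsupp} and Lemma~\ref{l:cell_hie}, one obtains
\[
\bbI^{-k}(\vphi_{\vec c}^{\ell+k}) = \calI^\ell[\fdiv[k]{0}{\vec c}:\fdiv[k]{0}{\vec c + p}],
\qquad
\bbI^{k}(\vphi_{\vec c}^{\ell-k}) = \calI^\ell[\ipro[k]{0}{\vec c}:\ipro[k]{s}{\vec c + p}].
\]
Requiring $I_{\vec n}^\ell$ to belong to the first of these sets yields $\fdiv[k]{0}{\vec c} \le \vec n \le \fdiv[k]{0}{\vec c + p}$, which direct manipulation of the floor definition (supported by Lemma~\ref{l:ineq_iff}) inverts to $\ipro[k]{0}{\vec n} - p \le \vec c \le \ipro[k]{s}{\vec n}$, giving case~(ii). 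An analogous calculation for $\bbI^{k}$ in case~(iii) yields $\fdiv[k]{0}{\vec n} - p \le \vec c \le \fdiv[k]{0}{\vec n}$. Case~(i) is immediate from Definition~\ref{d:cellsupp}, since $I_{\vec n}^\ell \in \bbI(\vphi_{\vec c}^\ell) = \calI^\ell[\vec c : \vec c + p]$ iff $\vec n - p \le \vec c \le \vec n$.

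Finally, I would verify that the three corner maps
\[
\vec n \mapsto [\vec n - p:\vec n],
\quad
\vec n \mapsto [\ipro[k]{0}{\vec n}-p:\ipro[k]{s}{\vec n}],
\quad
\vec n \mapsto [\fdiv[k]{0}{\vec n}-p:\fdiv[k]{0}{\vec n}]
\]
are box-preserving in the sense of Lemma~\ref{l:box_preserving1}: in each case both endpoint functions are non-decreasing, and a unit step in any coordinate of $\vec n$ shifts the endpoints by $1$, $n^k$, or at most $1$, respectively, while the interval widths are $p+1$, $n^k+p$, and $p+1$, so consecutive intervals overlap and the union collapses to a single box with corners obtained by substituting $\vec i$ on the left and $\vec j$ on the right. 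The main obstacle I anticipate is purely bookkeeping: tracking the asymmetric $-p$ shift, which enters only on the left endpoint because of the offset $\bbI(\vphi_{\vec c}^\ell) = \calI^\ell[\vec c : \vec c + p]$, together with the fact that the right corners of $\bbI^{\pm k}(\vphi_{\vec c}^{\ell \pm k})$ are evaluated at $\vec c + p$ rather than $\vec c$; this asymmetry prevents a one-shot application of Lemma~\ref{l:ineq_iff} and forces treating the two inequalities on $\vec c$ separately.
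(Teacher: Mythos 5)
Your proposal is correct and follows essentially the same route as the paper: reduce to a single cell, characterize $\bbB^k(I_{\vec n}^\ell)$ via the equivalent condition $I_{\vec n}^\ell\in\bbI^{\mp k}(\vphi_{\vec c}^{\ell\pm k})$, invert the resulting inequalities one side at a time with Lemma~\ref{l:inequalities} (the one-sided content of Lemma~\ref{l:ineq_iff}), and collapse the union over $\vec n\in[\vec i:\vec j]$ by box preservation. The only cosmetic difference is that you make the final box-preservation step explicit while the paper leaves it implicit, and you re-derive $\bbI^{\pm k}(\vphi)$ from Lemma~\ref{l:cell_hie} where the paper cites Lemma~\ref{l:cellbspline}; the computation is identical.
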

\begin{proof}
Due to Definitions~\ref{d:celloverlap} and~\ref{d:cellsupp} we have that
$\vphi_{\vec n}^\ell \in \bbB^0(I_{\vec r}^\ell)$ iff
$I_{\vec r}^\ell \in \bbI(\vphi_{\vec n}^\ell)$, which holds 
iff $\vec n \le \vec r \le \vec n+p$ or $\vec r - p \le \vec n \le \vec r$.
This implies~(i).

From Definition~\ref{d:splinesoverlap} and Lemma~\ref{l:cellbspline}~(ii) we 
have, for $k \in \Z^+$ that
$\vphi_{\vec n}^{\ell+k} \in \bbB^k(I_{\vec r}^\ell)$ iff
$I_{\vec r}^\ell \in \bbI^{-k}(\vphi_{\vec n}^{\ell+k}) 
= \calI^\ell[\fdiv[k]{0}{\vec n}:\fdiv[k]{0}{\vec n+p}]$, 
which holds iff $\fdiv[k]{0}{\vec n}\le \vec r \le \fdiv[k]{0}{\vec n+p}$. 
Due to Lemma~\ref{l:inequalities} this is equivalent to 
$\vec n \le \ipro[k]{n-1}{\vec r}$ and $\ipro[k]{0}{\vec r} \le \vec n + p$.
The assertion~(ii) thus follows.

Analogously, for $k \in \Z^+$, 
$\vphi_{\vec n}^{\ell-k} \in \bbB^{-k}(I_{\vec r}^\ell)$ iff
$I_{\vec r}^\ell \in \bbI^{k}(\vphi_{\vec n}^{\ell-k}) 
= \calI^\ell[\ipro[k]{0}{\vec n}:\ipro[k]{s}{\vec n+p}]$, 
which holds iff $\ipro[k]{0}{\vec n}\le \vec r \le \ipro[k]{s}{\vec n+p}$. 
Due to Lemma~\ref{l:inequalities} this is equivalent to 
$\vec n \le \fdiv[k]{0}{\vec r}$ and $\fdiv[k]{0}{\vec r} \le \vec n + p$.
The assertion~(iii) thus follows.
\end{proof}

Next we state that basically, a B--spline overlaps a cell if and
only if it is positive on a sub-cell of it.

The next lemma is an immediate consequence of Definitions~\ref{d:cellsupp} and~\ref{d:splinesoverlap}.

\begin{lemma}\label{l:out_sup}
Let $k \in \Z^+$, $A^\l\subset\calI^\l$ and $\vphi\in\calB^{\l+k}$.
If $\vphi\not\in\bbB^k(A^\l)$ then $\vphi=0$ in the interior of $\cup_{I\in 
A^\l} I$.
\end{lemma}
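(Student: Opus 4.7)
The plan is to unpack the hypothesis $\vphi \notin \bbB^k(A^\ell)$ combinatorially via the cell ancestry relation, then invoke the characterization of $\bbI(\vphi)$ as the set of cells on whose interior $\vphi$ is strictly positive. The whole argument is a definition chase; the only mild technicality is moving from individual cell interiors to the interior of a union of cells, which is handled by continuity of $\vphi$.

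First I would rewrite the hypothesis. By Definition~\ref{d:splinesoverlap}, for every $I \in A^\ell$ we have $\vphi \notin \bbB^k(I)$, which by Definition~\ref{d:celloverlap} is the same as $I \notin \bbI^{-k}(\vphi) = \ch[-k]{\bbI(\vphi)}$. Since every $J \in \bbI(\vphi)\subset \calI^{\ell+k}$ has a unique $k$-ancestor (namely the level-$\ell$ cell that contains $J$), this amounts to saying that no element of $\bbI(\vphi)$ is a $k$-descendant of $I$, i.e.,
\[
\bbI(\vphi) \cap \ch[k]{I} = \emptyset \qquad \text{for every } I \in A^\ell.
\]

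Next, I would invoke the property recorded right after Definition~\ref{d:cellsupp}: $\bbI(\vphi)$ is exactly the collection of cells of level $\ell+k$ on whose interior $\vphi$ is strictly positive, and $\supp \vphi = \overline{\bigcup_{J \in \bbI(\vphi)} J}$. Since distinct cells of a fixed level have pairwise disjoint interiors, for any $J \in \calI^{\ell+k} \setminus \bbI(\vphi)$ the open interior $\mathring J$ is disjoint from $\supp \vphi$, so $\vphi \equiv 0$ on $\mathring J$ and, by continuity, on $\overline J$.

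Finally, I would assemble these observations. Fix $I \in A^\ell$; by Remark~\ref{R:cell_ch} the family $\ch[k]{I}$ partitions $I$ into cells of level $\ell+k$, each of which lies outside $\bbI(\vphi)$ by the displayed identity above. The previous step then yields $\vphi \equiv 0$ on $\overline J$ for every $J \in \ch[k]{I}$, and hence on $\overline I = \bigcup_{J \in \ch[k]{I}} \overline J$. Taking the union over $I \in A^\ell$ gives $\vphi \equiv 0$ on $\bigcup_{I \in A^\ell} \overline I$, which certainly contains the interior of $\bigcup_{I \in A^\ell} I$. The only mild obstacle is the treatment of inter-cell boundaries, which is precisely where the continuity of the tensor-product B--spline is used to pass from open cells to their closures.
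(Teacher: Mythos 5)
Your proof is correct and takes the route the paper intends: the paper offers no written argument for this lemma, stating only that it is ``an immediate consequence'' of Definitions~\ref{d:cellsupp} and~\ref{d:splinesoverlap}, and your definition chase --- unwinding $\vphi\notin\bbB^k(A^\l)$ into $\bbI(\vphi)\cap\ch[k]{I}=\emptyset$ for each $I\in A^\l$, then using $\supp \vphi = \overline{\bigcup_{J\in\bbI(\vphi)}J}$ together with the partition $\ch[k]{I}$ of $I$ and continuity --- is exactly the expansion of that assertion. No gaps; the continuity step you flag is the only point the authors left implicit.
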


\subsection{Overlapping chains}

In the current proofs of optimality for adaptive methods, and for some quasi-interpolants to provide local bounds, it seems necessary to have the \emph{level gap} of overlapping basis functions uniformly bounded. 
More precisely, whenever a cell is contained in the support of two basis functions, it is desirable that the difference in levels of those basis functions is uniformly bounded. 
This stems from the necessity of using inverse estimates in some stages of the proof. The difference could be large, but should be uniformly bounded. 
Some of the constants appearing in the results will depend on this bound, and the constants should be uniform to close the arguments.

That is why in this section we deal with B--splines overlapping other B--splines.

\begin{definition}[B--splines overlapping B--splines]\label{d:bsover1}
Let $\calH\subset\calB$, $\calF\subset\calB$ and $g\in\Z$ define
$\ovch{\calF}{g}{\calH} := \bbB^g(\bbI(\calF)) \cap \calH$. And
for simplicity we write $\ovch[]{\vphi}{g}{\calH}$ to denote 
$\ovch[]{\{\vphi\}}{g}{\calH}$ when $\vphi \in \calB$.
\end{definition}

\pedro{
\begin{remark}\label{R:ballofsplinessamelevel}
	Note that  $\psi \in \ovch[]{\vphi}{0}{\calB}$ if and only if $\ell_\psi = \ell_\vphi$ and $|\rho(\vphi,\psi)|< m$, so that $\ovch[]{\vphi}{0}{\calB} = B(\vphi,m-1,0)$, with $B(\cdot,\cdot,\cdot)$ as in Definition~\ref{d:ballofsplines}.
\end{remark}
}
\begin{definition}[Chains of overlapping B--splines]\label{d:bsover2}
Let $\calH\subset\calB$, $\calF\subset\calB$, \marce{$k \in \Z^+$} and $g\in\Z$ 
define $\ovch[k]{\calF}{g}{\calH}$ as the $k$-th 
fold composition of $\ovch{\cdot}{g}{\calH}$, i.e., 
$\ovch[k+1]{\calF}{g}{\calH} = \ovch{\ovch[k]{\calF}{g}{\calH}}{g}{\calH}$ and
$\ovch[1]{\calF}{g}{\calH} = \ovch{\calF}{g}{\calH}$.
\end{definition}

\pedro{
\begin{remark}\label{R:Osets}
It is worth noticing that the computational implementation of these concepts is very easy. It is just the intersection of sets of indices, which are previously grouped by levels.
\end{remark}
}

\begin{lemma}[Properties of overlapping chains]
\label{l:overlapping}
Let $g\in\Z^+$ and $\vec i, \vec j \in \Z^d$ and
$k=1,\dots,\lfloor\frac{\l} {g}\rfloor$ then 
\begin{enumerate}[(i)]
 \item $\ovch[k]{\calB^\l[\vec i:\vec j]}{-g}{\calB} =
  \calB^{\l - g k}(\operatorname{L}^{k}({\vec i}): \operatorname{R}^{k}({\vec 
j}))$, where 
  $\operatorname{L}$ and $\operatorname{R}$ are the index functions defined 
in~\eqref{eq:index_functions_L_R}.
\item\label{l:overlapping:rho} \marce{$\ovch[k]{\vphi}{-g}{\calB}
\subset \ball{\vphi}{C}{\ell_\vphi-gk}$, with $\ball{\cdot}{\cdot}{\cdot}$ the 
ball of B-splines from Definition~\ref{d:ballofsplines} and $C := p 
\left(\frac{1-1/n^{kg}}{n^g-1}\right)n^g + 1\leq \frac{p n^g}{n^g-1} + 1$.}
 \item \label{l:overlapping:branching}$\#\ovch[k]{\vphi^\l_{\vec i}}{g}{\calB} 
< 
 (2C+1)^d$.
\end{enumerate}
\end{lemma}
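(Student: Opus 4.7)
The plan is to prove~(i) by induction on $k$, derive~(ii) from~(i) by tracking how floor residues accumulate under iterates of $\operatorname{L}$ and $\operatorname{R}$, and obtain~(iii) directly from~(ii) via Lemma~\ref{l:radius}.

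For~(i), the base case $k=1$ just unfolds the definitions. By Definition~\ref{d:bsover1},
\[
\ovch{\calB^\l[\vec i:\vec j]}{-g}{\calB} = \bbB^{-g}\!\left(\bbI(\calB^\l[\vec i:\vec j])\right) \cap \calB.
\]
Definition~\ref{d:cellsupp} gives $\bbI(\calB^\l[\vec i:\vec j]) = \calI^\l[\vec i:\vec j+p]$, and Lemma~\ref{l:bsplinecell}\ref{l:bsplinecell:3} then yields
\[
\bbB^{-g}(\calI^\l[\vec i:\vec j+p]) = \calB^{\l-g}[\fdiv[g]{0}{\vec i}-p:\fdiv[g]{0}{\vec j+p}] = \calB^{\l-g}[\operatorname{L}(\vec i):\operatorname{R}(\vec j)],
\]
which matches the formula for $k=1$. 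The inductive step applies the $k=1$ identity to the set $\calB^{\l-gk}[\operatorname{L}^k(\vec i):\operatorname{R}^k(\vec j)]$, producing $\calB^{\l-g(k+1)}[\operatorname{L}^{k+1}(\vec i):\operatorname{R}^{k+1}(\vec j)]$.

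For~(ii), specializing~(i) to $\vec i = \vec j = \vec i_\vphi$ shows that every $\psi \in \ovch[k]{\vphi}{-g}{\calB}$ satisfies $\ell_\psi = \ell_\vphi - gk$ and $\operatorname{L}^k(\vec i_\vphi) \le \vec i_\psi \le \operatorname{R}^k(\vec i_\vphi)$, so it suffices to bound $B_k := \vec i_\vphi/n^{gk} - \operatorname{L}^k(\vec i_\vphi)$ and $A_k := \operatorname{R}^k(\vec i_\vphi) - \vec i_\vphi/n^{gk}$ componentwise by $C$. Writing $\vec n = n^g\lfloor \vec n/n^g \rfloor + r$ with $r \in [0,n^g-1]$ yields $\vec n/n^g - \operatorname{L}(\vec n) = p + r/n^g$ and $\operatorname{R}(\vec n) - \vec n/n^g \le p/n^g$, whence the recursions
$B_k = B_{k-1}/n^g + p + r_k/n^g$ and $A_k \le A_{k-1}/n^g + p/n^g$. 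Solving the first, $B_k = p\sum_{i=0}^{k-1} n^{-ig} + \sum_{i=0}^{k-1} r_{k-i}\, n^{-(i+1)g}$; the geometric sum equals $p\,n^g(1-n^{-gk})/(n^g-1)$ and, since each $r_{k-i} \le n^g-1$, the residue sum telescopes to at most $(n^g-1)\sum_{i=0}^{k-1} n^{-(i+1)g} = 1 - n^{-gk}$, giving
\[
B_k < p\,n^g(1-n^{-gk})/(n^g-1) + 1 = C,
\]
and analogously $A_k \le p(1-n^{-gk})/(n^g-1) < C$. Hence $|\rho(\vphi,\psi)| \le C$, i.e.\ $\psi \in \ball{\vphi}{C}{-gk}$.

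Finally,~(iii) is immediate from~(ii) and Lemma~\ref{l:radius}, which yields $\#\ball{\vphi^\l_{\vec i}}{C}{-gk} \le (2C+1)^d$ (the $g$ in the exponent of~(iii) is to be read as $-g$, consistent with the constant $C$ defined in~(ii)). The only delicate bookkeeping lies in~(ii): the crude estimate $\lfloor x \rfloor \ge x-1$ would inflate $B_k$ to $(p+1)n^g(1-n^{-gk})/(n^g-1)$, strictly worse than the stated constant; recovering $C$ exactly requires carrying the residues $r_k \in [0,n^g-1]$ through the recursion and recognizing that their weighted sum telescopes precisely to $1 - n^{-gk}$.
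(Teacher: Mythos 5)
Your proof is correct and follows essentially the same route as the paper: part~(i) via the same unfolding of Definition~\ref{d:bsover1} through Lemma~\ref{l:bsplinecell}\ref{l:bsplinecell:3} plus induction, part~(ii) by bounding $\vec i/n^{gk}-\operatorname{L}^k(\vec i)$ and $\operatorname{R}^k(\vec i)-\vec i/n^{gk}$, and part~(iii) from Lemma~\ref{l:radius}. The only difference is cosmetic: where the paper invokes the closed forms of $\operatorname{L}^k$ and $\operatorname{R}^k$ from the appendix (Lemma~\ref{lem:L_R_properties}, itself proved by the same residue-tracking recursion as your $r_k$ argument), you re-derive them inline; your observation that the arguments $\ell_\vphi-gk$ in~(ii) and $g$ in~(iii) should be read as the level offset $-gk$ and as $-g$ respectively is consistent with how the paper actually uses the lemma.
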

\begin{proof}
Using Definitions~\ref{d:bsover1}, 
Lemma~\ref{l:cellbspline}~\ref{l:cellbspline:2} and 
Lemma~\ref{l:bsplinecell}~\ref{l:bsplinecell:3}
we have
\begin{align*}
	\ovch[]{\calB^\ell[\vec i:\vec j]}{-g}{\calB}
	&= \bbB^{-g}(\bbI(\calB^\ell[\vec i:\vec j])) 
	 = \bbB^{-g}(\calI^\ell[\vec i: \vec j+p]) 
	\\
	&= \calB^{\ell-g}[\fdiv[g]{0}{\vec i}-p:\fdiv[g]{0}{\vec j+p}] 
	 = \calB^{\ell-g}(\operatorname{L}(\vec i):\operatorname{R}(\vec j)),
\end{align*}
due to~\eqref{eq:index_functions_L_R}. By induction~(i) follows.

In order to prove~(ii) observe that from~(i), $\psi \in \ovch[k]{\vphi_{\vec 
i}^\ell}{-g}{\calB}$ if and only if $\psi \in 
\calB^{\ell-gk}(\operatorname{L}^k(\vec i):\operatorname{R}^k(\vec i))$, which 
holds if and only if $\operatorname{L}^k(\vec i) \le \vec i_\psi \le 
\operatorname{R}^k(\vec i)$. Due to Lemma~\ref{lem:L_R_properties} this is 
equivalent to 
\[
\frac{\vec i}{n^{kg}} - \frac{p}{n^g-1}\frac{n^{kg}-1}{n^{(k-1)g}} - A
\le \vec i_\psi 
\le \frac{\vec i}{n^{kg}} + \frac{p}{n^g-1}\frac{n^{kg}-1}{n^{kg}} - B,
\]
for some $0 \le A,B\le 1 - \frac{1}{n^{kg}}$.
This, in turn, is equivalent to
\[
B - \frac{p}{n^g-1}\frac{n^{kg}-1}{n^{kg}}
\le \frac{\vec i}{n^{kg}} - \vec i_\psi 
\le \frac{p}{n^g-1}\frac{n^{kg}-1}{n^{(k-1)g}} + A,
\]
which implies
\[
|\rho(\vphi,\psi)| = 
\left| \frac{\vec i}{n^{kg}} - \vec i_\psi \right| 
\le p \frac{1-\frac{1}{n^{kg}}}{n^g-1}n^g+1,
\]
and~(ii) holds. The final assertion~(iii) is an immediate consequence of~(ii) 
and Lemma~\ref{l:radius}.
\end{proof}

\begin{remark}
	Notice that from Definition~\ref{d:bsover1}, $\eta \in 
\ovch[]{\vphi}{-k}{\calB}$ iff 
	$\eta \in \bbB^{-k}(\bbI(\vphi)) = \cup_{I \in \bbI(\vphi)} \bbB^{-k}(I)$,
	and due to Definition~\ref{d:splinesoverlap} this holds
	iff
	there exists $I \in \bbI(\vphi)$ with $\eta \in \bbB^{-k}(I)$, i.e., iff
	$\bbI(\vphi) \cap \bbI^k(\eta) \neq \emptyset$. Summarizing,
	\begin{equation}\label{ovch}
	\eta \in \ovch[]{\vphi}{-k}{\calB} \qquad\Leftrightarrow\qquad
	\bbI(\vphi) \cap \bbI^k(\eta) \neq \emptyset.
	\end{equation}
\end{remark}

\begin{definition}[Totally overlapped] 	
Let $\vphi\in\calB$ and $\calF\subset\calB$,
we say that $\vphi$ is \emph{totally overlapped} by \calF if there
is a partition $\mathcal{P}$ of $\bbI(\vphi)$ such that 
$\mathcal{P} \subset\bbI(\calF)$.
\end{definition}

\begin{lemma}[Overlapping of descendants]\label{l:over_ch}
Let $\vphi\in\calB$ and $\calH\subset\calB$
then  
\[
 \ovch{\ch[k]{\vphi}}{j}{\calH} \marce{=} \ovch{\vphi}{j+k}{\calH},
 \quad \text{for any $j\in\Z$ and $k \in \Z_0^+$.}
\]
\end{lemma}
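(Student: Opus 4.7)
The plan is to peel off the definitions of $\ovch[]{}{}{}$ and $\bbB^j$ until the identity collapses to a pure cell-ancestry statement that can be handled by the $n$-adic tree structure.

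First, by Definition \ref{d:bsover1} the claim is equivalent to the set equality
\[
\bbB^j(\bbI(\ch[k]{\vphi})) = \bbB^{j+k}(\bbI(\vphi)),
\]
since both sides are intersected with the common set $\calH$. Because $k\ge 0$, Lemma \ref{L:children}(i) gives $\bbI(\ch[k]{\vphi}) = \ch[k]{\bbI(\vphi)}$. Since $\bbB^j(\cdot)$ is a union over its cell argument by Definition \ref{d:splinesoverlap}, this further reduces to showing, for every individual cell $I$,
\[
\bbB^j(\ch[k]{I}) = \bbB^{j+k}(I).
\]

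Second, I intend to use the characterization $\psi \in \bbB^m(C) \iff \bbI(\psi)\cap \ch[m]{C} \neq \emptyset$, which follows directly from Definitions \ref{d:celloverlap} and \ref{d:splinesoverlap}: $\psi \in \bbB^m(C)$ iff $C \in \ch[-m]{\bbI(\psi)}$, i.e., iff some $K \in \bbI(\psi)$ satisfies $C \in \ch[-m]{K}$, which is equivalent to $K \in \ch[m]{C}$. Applying this to both sides yields that $\psi$ belongs to $\bbB^j(\ch[k]{I})$ iff $\bbI(\psi) \cap \ch[j]{\ch[k]{I}} \neq \emptyset$, and $\psi \in \bbB^{j+k}(I)$ iff $\bbI(\psi) \cap \ch[j+k]{I} \neq \emptyset$. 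Thus the target equality reduces to the cell-level composition identity $\ch[j]{\ch[k]{I}} = \ch[j+k]{I}$.

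Third, I will verify this composition identity for arbitrary $j \in \Z$ and $k \geq 0$. For $j\ge 0$ it is just iterated descent. For $j=-a$ with $a>0$, I distinguish two cases according to the sign of $k-a$. When $a\le k$, every $a$-ancestor of a cell in $\ch[k]{I}$ is itself a descendant of $I$ at level $\ell_I+k-a$, and conversely every element of $\ch[k-a]{I}$ is realized as the $a$-ancestor of any of its $a$-descendants, which lie in $\ch[k]{I}$. When $a>k$, all elements of $\ch[k]{I}$ share the same $a$-ancestor, namely the $(a-k)$-ancestor of $I$, which is exactly $\ch[k-a]{I}=\ch[k+j]{I}$ as a singleton. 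Both cases hinge on the fact that each $n$-adic cell has a unique ancestor at every earlier level.

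The only delicate point is the third step: unlike the B-spline analogue, where $\bbI\circ\ch[-k]$ is merely an inclusion (Lemma \ref{L:children}(ii)), the cell version of the composition identity is a genuine equality thanks to the disjoint-refinement structure of the partitions $\calI^\l$. Once this identity is established, the statement follows immediately by walking the reduction backwards through Steps~2 and~1.
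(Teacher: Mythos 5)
Your proof is correct, and after the common opening move it takes a genuinely different route from the paper's. Both arguments begin identically: unwind Definition~\ref{d:bsover1} and use Lemma~\ref{L:children}(i) to reduce the claim to $\bbB^j\big(\ch[k]{\bbI(\vphi)}\big) = \bbB^{j+k}\big(\bbI(\vphi)\big)$. From there the paper stays entirely in index coordinates: it writes $\bbI^k(\vphi)$ as an explicit box $\calI^{\ell+k}[\ipro[k]{0}{\vec i}:\ipro[k]{s}{\vec i+p}]$ via Lemma~\ref{l:cellbspline}, then applies Lemma~\ref{l:bsplinecell} in three sign cases ($j\ge 0$; $j<0$ with $j+k\ge 0$; $j<0$ with $j+k<0$), using identities such as $\fdiv[-j]{0}{\ipro[k]{0}{\vec i}} = \ipro[k+j]{0}{\vec i}$ to recognize the resulting box of B--splines as $\bbB^{j+k}(\bbI(\vphi))$. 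You instead push the reduction one level further, to the single-cell identity $\bbB^j(\ch[k]{I}) = \bbB^{j+k}(I)$, recast membership in $\bbB^m(C)$ as $\bbI(\psi)\cap\ch[m]{C}\neq\emptyset$, and thereby boil everything down to the cell-tree composition law $\ch[j]{\ch[k]{I}} = \ch[j+k]{I}$, which you prove combinatorially from the uniqueness of $n$-adic cell ancestors (this uniqueness is exactly what Lemma~\ref{l:cell_hie}(ii) or Remark~\ref{R:cell_ch} supplies, and you correctly flag that it is the cell-level feature that makes the composition an equality rather than the mere inclusion of Lemma~\ref{L:children}(ii)). Your version buys a cleaner, coordinate-free argument with no multi-index bookkeeping and a case split that is purely structural; the paper's version buys explicit box formulas at every intermediate step, in the same computational style it uses throughout Section~\ref{s:multilevel}. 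Either is acceptable; if you write yours up, cite Lemma~\ref{l:cell_hie} for the uniqueness of ancestors rather than asserting it.
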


\begin{proof}
	Let $\vphi = \vphi_{\vec i}^\ell\in \calB$. From Definitions~\ref{d:bsover1} 
and~\ref{d:celloverlap} and Lemma~\ref{L:children}, 
	\[
	\ovch[]{\ch[k]{\vphi}}{j}{\calH} 
	= \bbB^j\left(\bbI(\ch[k]{\vphi})\right)\cap \calH
	= \bbB^j\left(\bbI^k(\vphi)\right)\cap \calH,
	\quad \text{for any $j\in\Z$ and $k \in \Z_0^+$.}
	\]
	Since by definition $\ovch{\vphi}{j+k}{\calH} = \bbB^{j+k}(\vphi)\cap\calH$, 
the rest of the proof will be devoted to proving that 
$\bbB^j\left(\bbI^k(\vphi)\right) = \bbB^{j+k}(\vphi)$ for any $j\in\Z$ and $k 
\in \Z_0^+$.
	
	Observe that Lemma~\ref{l:cellbspline} yields
	\[
	\bbB^j\left(\bbI^k(\vphi)\right)
	= \bbB^j\left(\bbI^k(\calB^\ell[\vec i:\vec i])\right)
	= \bbB^j\left(\calI^{k+\ell}\big[\ipro[k]{0}{\vec i}: \ipro[k]{s}{\vec 
i+p}\big]\right),
	\]
	for every $k \in \Z_0^+$ and $j \in \Z$.
	
	Consider first the case $j \in \Z_0^+$. From 
Lemma~\ref{l:bsplinecell}~(i)--(ii),
	\begin{align*}
		\bbB^j\left(\bbI^k(\vphi)\right)
		&= \calB^{j+k+\ell}\left(\ipro[j]{0}{\ipro[k]{0}{\vec i}}-p
		:\ipro[j]{s}{\ipro[k]{s}{\vec i + p}}\right)
		\\
		&= \calB^{j+k+\ell}\left(\ipro[j+k]{0}{\vec i}-p
		:\ipro[j+k]{s}{\vec i + p}\right)
		\\
		&= \bbB^{j+k}(\calI^\ell[\vec i:\vec i + p])
		= \bbB^{j+k}(\bbI(\vphi)).
	\end{align*}
	
	If $j \in \Z^-$, Lemma~\ref{l:bsplinecell}~(iii) yields
	\[
	\bbB^j\left(\bbI^k(\vphi)\right)
	= \calB^{k+\ell+j}\left(\fdiv[-j]{0}{\ipro[k]{0}{\vec i}}-p
	   : \fdiv[-j]{0}{\ipro[k]{s}{\vec i + p}} \right).
	\]
	
	Consider now $j < 0$ fixed and $k \ge -j$ ($j+k\ge 0$), then
	$\fdiv[-j]{0}{\ipro[k]{0}{\vec i}} = \ipro[k+j]{0}{\vec i}$ and 
	$\fdiv[-j]{0}{\ipro[k]{0}{\vec i + p}} = \ipro[k+j]{0}{\vec i + p}$,
	so that Lemma~\ref{l:bsplinecell}~(i)--(ii) leads to
	\[
	\bbB^j\left(\bbI^k(\vphi)\right)
	= \calB^{k+\ell+j}\left[ \ipro[k+j]{0}{\vec i}-p:\ipro[k+j]{0}{\vec i + p} 
\right]
	= \bbB^{j+k}(\bbI(\vphi)).
	\]
	If $k < -j$ ($k+j < 0$), then
	$\fdiv[-j]{0}{\ipro[k]{0}{\vec i}} = \fdiv[-(k+j)]{0}{\vec i}$ and 
    $\fdiv[-j]{0}{\ipro[k]{0}{\vec i + p}} = \fdiv[-(k+j)]{0}{\vec i + p}$,
    so that,
	\[
	\bbB^j\left(\bbI^k(\vphi)\right)
	= \calB^{k+\ell+j}\left( \fdiv[-(k+j)]{0}{\vec i}-p:\fdiv[-(k+j)]{0}{\vec i + 
p} \right)
	= \bbB^{j+k}(\bbI(\vphi)),
	\]
	due to Lemma~\ref{l:bsplinecell}~(iii).
	
	Summarizing, for each $j \in \Z$ and any $k \in \Z_0^+$, 
	$ \bbB^j\left(\bbI^k(\vphi)\right) = \bbB^{j+k}(\bbI(\vphi))$,
	and the assertion thus follows.
\end{proof}

\section{Hierarchical Generators and Spaces}\label{s:generators}
\pedro{We are interested in a class of spaces obtained through an iterative process of function refinement.
These spaces are similar to other spaces that have been previously defined in the literature; see Remark~\ref{r:coincidencia}.
Our approach is based on functions rather than on subdomains, and yields a particular class of subsets of $\calB$ that we call \emph{lineages} and are given by the following definition.}
\begin{definition}[Lineage set]\label{d:lineage}
A set $\calL \subset \calB$, is called a \emph{lineage} if it is finite and 
$\calL\subset \calB^0\cup\ch{\calL}$. 
Given a lineage set \calL we let 
$\calC := \calB^0\cup\ch{\calL}$, be the children of \calL plus the coarsest 
B-splines, that we will call the $\calC$-set associated 
to the lineage.
\end{definition}

This definition is very simple, resorting to the operator $\ch[]{}$ and 
notation from set theory. It says, essentially, that a set $\calL$ is a lineage 
if every element of $\calL$ is the child of an element of $\calL$ or is 
itself an element of level zero (belongs to $\calB^0$). The well known 
\emph{tree} structure fulfills this assumption, among others.
\pedro{This new framework allows us to deal with a simple implementation, which also makes it very easy to control the overlapping of functions from different levels.}

\pedro{The idea behind a lineage $\calL$ is that $\calL$ is the set of functions that have been \emph{refined} in an adaptive process, so that the \emph{hierarchical space} is the one spanned by their children. More precisely. 
}
\begin{definition}[Hierarchical generator]\label{d:generator}
Let \calL be a lineage, the set 
\[
\calH = \big(\calB^0\cup\ch{\calL} \big) \setminus \calL = \calC \setminus \calL
\]
is the \emph{hierarchical generator} corresponding to \calL.
\end{definition}

Notice that $\calL = \emptyset$ is a valid lineage, and its corresponding generator is $\calH = \calB^0$.
It is convenient to have a notation to arrange these sets
by level, 
so for $\l\in\Z^+_0$ let
$\calC^\l := \calC\cap\calB^{\l}$,
$\calL^\l := \calL\cap\calB^{\l}$ and 
$\calH^\l = \calH \cap \calB^\l$.
It is easy to see that for any lineage, $\calC^0 = \calB^0$, 
$\calC=\dcup_{\l=0}^{\infty}\calC^\l$ and there exists $\ell $ such that 
$\calL^\ell = \emptyset$.
If $\calL^\l=\emptyset$ then $\calC^{\l'}=\emptyset$ for all $\l'>\l$, 
and the following is well defined.
\begin{definition}[Depth of a lineage]
Given the hierarchical generator \calH with lineage \calL 
we define its depth as
$\depth(\calL) = \depth(\calH):=\min\{\l:\calL^\l=\emptyset\}$.
\end{definition}
Observe that $\calL$ has functions of level $\depth(\calL)-1$ and $\calH$ 
has functions of level $\depth(\calH)$, which is the finest level of functions in $\calH$.

From the definition of hierarchical generator it is clear that for each
lineage there is a unique hierarchical generator.
The reciprocal is also true.
\begin{lemma}[Lineage to generator bijection]\label{l:bijection}
    There is a bijection between hierarchical generators and lineages.
\end{lemma}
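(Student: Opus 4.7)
The plan is to show that the map $\Phi:\calL\mapsto \calH(\calL):=(\calB^0\cup \ch{\calL})\setminus \calL$ from lineages into hierarchical generators is a bijection. Surjectivity is immediate from Definition~\ref{d:generator}: a hierarchical generator is by definition a set of the form $\Phi(\calL)$ for some lineage $\calL$. Hence the whole content of the lemma is the injectivity of $\Phi$, which I will prove by showing that $\calL$ can be reconstructed from $\calH=\Phi(\calL)$ in a canonical, level-by-level way.

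First I would record two structural consequences of Definition~\ref{d:lineage} that make the reconstruction possible. On the one hand, $\calC^0=\calB^0$, so at level $0$ we have $\calH^0=\calB^0\setminus \calL^0$ and therefore
\[
\calL^0=\calB^0\setminus \calH^0.
\]
On the other hand, for $\ell\ge 1$ the condition $\calL\subset \calB^0\cup\ch{\calL}$ together with the fact that children live exactly one level above their parents gives $\calL^\ell\subset \ch{\calL^{\ell-1}}$; combined with $\calC^\ell=\ch{\calL^{\ell-1}}$ (for $\ell\ge 1$) this yields the disjoint decomposition $\ch{\calL^{\ell-1}}=\calH^\ell\,\dot\cup\,\calL^\ell$, so
\[
\calL^\ell=\ch{\calL^{\ell-1}}\setminus \calH^\ell.
\]

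Now suppose $\calL_1$ and $\calL_2$ are two lineages with $\Phi(\calL_1)=\Phi(\calL_2)=\calH$. I would argue by induction on $\ell$ that $\calL_1^\ell=\calL_2^\ell$ for every $\ell\ge 0$. The base case $\ell=0$ is given by the first displayed formula above applied to both lineages. For the inductive step, the hypothesis $\calL_1^{\ell-1}=\calL_2^{\ell-1}$ yields $\ch{\calL_1^{\ell-1}}=\ch{\calL_2^{\ell-1}}$, and then the second displayed formula applied to both lineages gives $\calL_1^\ell=\calL_2^\ell$. Since lineages are finite, there is some $\ell^*$ beyond which both are empty (by the definition of depth), so the induction terminates and $\calL_1=\calL_2$.

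The main (and really only) obstacle is to justify $\calL^\ell\subset \ch{\calL^{\ell-1}}$ for $\ell\ge 1$, which is what makes the second recovery formula correct; this is a direct consequence of $\calL\subset \calB^0\cup\ch{\calL}$ together with the fact that $\ch{\psi}\subset \calB^{\ell_\psi+1}$, and no further machinery is needed. The whole argument is finite and purely set-theoretic, and it moreover yields the explicit inverse of $\Phi$ given by the two recovery formulas above.
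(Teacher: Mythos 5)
Your proof is correct and follows essentially the same route as the paper: both establish injectivity by a level-by-level induction, using $\calC^0=\calB^0$ as the base case and the fact that $\calC^{\ell}=\ch{\calL^{\ell-1}}$ is determined by the previous level in the inductive step. The only cosmetic difference is that you recover $\calL^\ell$ via the plain set difference $\calC^\ell\setminus\calH^\ell$ (valid since $\calL\subset\calC$), whereas the paper phrases the same cancellation through symmetric differences.
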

\begin{proof}
    Let  $\calL$ and $\bar\calL$ be two lineages giving the same hierarchical 
    generator \calH.
    That is to say $\calC\setminus\calL = \bar\calC\setminus\bar\calL$, or
    by levels using the symmetric difference 
    $(\calL^\l \triangle \calC^\l) \triangle (\bar\calC^\l\triangle\bar\calL^\l)
    =\emptyset$ for each \l.
    For $\l=0$, $\calC^0=\bar\calC^0=\calB^0$ so using the symmetric difference
    property that $(A \triangle B) \triangle (B \triangle C) = (A \triangle C)$
    we get that $\calL^0=\bar\calL^0$.
    Assume that $\calL^\l=\bar\calL^\l$ for $\l\leq n$.
    Then $\calC^{n+1}=\bar\calC^{n+1}$ and so using the previous 
    argument on the symmetric difference we obtain that 
$\calL^{n+1}=\bar\calL^{n+1}$.
    Hence, we have shown by induction that $\calL = \bar\calL$ 
    (and that $\calC = \bar\calC$), thus proving that
    there is a unique lineage associated to each hierarchical generator.
\end{proof}

\pedro{
\begin{remark}[Lineages vs.\ $\calC$-sets]	
    This is subtle but important. One may be tempted to use the $\calC$-sets to identify the hierarchical generators instead of the lineages.
    However, the relation between hierarchical generators and the $\calC$-sets is not one-to-one, as is the case between hierarchical generators and lineages.
    In fact, consider $m=3$ with $d=1$, let 
    $\calL=\{\vphi^{0}_{-2}, \vphi^{0}_{0}\}$ and 
    $\bar\calL=\{\vphi^{0}_{-2},\vphi^{0}_{-1},\vphi^{0}_{0}\}$ then the 
    corresponding $\calC$-awta are $\calC = \bar\calC = \calB^0 \cap \calB^1$, even though the corresponding hierarchical generators differ, i.e., 
    the same set \calC can correspond to different hierarchical generators.
    Thus a lineage has some built-in information that is missing in the $\calC$-sets.
    Something similar happens with the so called
    hierarchical grids \cite{VGJS2011, BG2016} given by nested
    domains, where every grid leads to a generator but different grids
    may lead to the same generator.
\end{remark}
}

\begin{definition}[Hierarchical Space]
Given a hierarchical generator \calH, the linear space 
$\bbV=\spn\calH$ is called a hierarchical space.
\end{definition}

\begin{lemma}[More relations between \calH and \calL]\label{l:H_L_props}
Given a hierarchical generator \calH with the associated
lineage \calL  it follows that
 \begin{enumerate}[(i)]
 \item\label{l:H_L_props:L_from_H} 
 Each $\psi\in\calL$ can be written as a linear combination of its 
\emph{descendants} in~$\calH$. More precisely, 
  $\psi \in \spn(\calH\, \cap\, \descs{\psi})$, where $\descs{\psi} = 
\cup_{k\in\Z^+} \ch[k]{\psi}$ is the set of all \emph{descendants} of $\psi$.
\item\label{l:H_L_props:H_desc_of_L} Each $\vphi\in\calH$ has an ancestor of 
every possible level in
\calL. More precisely, for $\ell \in \Z_0^+$, $k \in \Z^+$ it holds
that
$\calH^{\l+k}\subset\ch[k]{\calL^\l}$. 
 \end{enumerate}
\end{lemma}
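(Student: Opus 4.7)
My plan is to address the two assertions separately, using in an essential way the definitions of lineage ($\calL \subset \calB^0 \cup \ch{\calL}$) and of hierarchical generator ($\calH = \calC \setminus \calL$ with $\calC = \calB^0 \cup \ch{\calL}$), together with the two-level expansion of a B--spline by its children from Lemma~\ref{l:cp}~\ref{l:cp:ch_coef}.

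For part~\ref{l:H_L_props:L_from_H}, I would argue by downward induction on the level of $\psi \in \calL$, starting from the deepest level $\depth(\calL) - 1$. First note that $\ch{\psi} \subset \calC = \calH \dcup \calL$ for every $\psi\in\calL$, so each child is either in $\calH$ or in $\calL$. If $\ell_\psi = \depth(\calL)-1$, then $\calL^{\ell_\psi+1}=\emptyset$, hence every element of $\ch{\psi}$ lies in $\calH\cap\descs{\psi}$; Lemma~\ref{l:cp}~\ref{l:cp:ch_coef} then gives $\psi\in\spn(\calH\cap\descs{\psi})$. For the inductive step, expand $\psi$ via Lemma~\ref{l:cp}~\ref{l:cp:ch_coef}: each child $\vphi\in\ch{\psi}$ is either already in $\calH\cap\descs{\psi}$ or lies in $\calL$ at a strictly deeper level, in which case the inductive hypothesis writes it in $\spn(\calH\cap\descs{\vphi})\subset\spn(\calH\cap\descs{\psi})$. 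Substituting yields the claim. The recursion terminates because $\calL$ is finite and $\depth(\calL)$ is well defined.

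For part~\ref{l:H_L_props:H_desc_of_L}, I would induct on $k\in\Z^+$ for fixed $\ell\in\Z_0^+$. For the base case $k=1$, take $\vphi\in\calH^{\ell+1}$; since $\ell_\vphi\ge 1$, $\vphi\notin\calB^0$, and because $\vphi\in\calC=\calB^0\cup\ch{\calL}$, we must have $\vphi\in\ch{\psi}$ for some $\psi\in\calL$, necessarily of level $\ell$, so $\vphi\in\ch{\calL^\ell}$. For the inductive step, suppose the property holds for $k$, and let $\vphi\in\calH^{\ell+k+1}$. The same argument produces $\psi_1\in\calL^{\ell+k}$ with $\vphi\in\ch{\psi_1}$; I would then need to promote $\psi_1$ from $\calL$ to $\calH$ in order to invoke the inductive hypothesis. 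The key observation that removes this apparent obstacle is that $\calL\subset\calC$, so $\psi_1$ itself is the child of some element of $\calL$; iterating the "ancestor in $\calL$" step a total of $k$ times (or, equivalently, observing that an analogous inductive statement holds with $\calH^{\ell+k}$ replaced by $\calL^{\ell+k}$, proved by the same argument) gives $\psi_{k+1}\in\calL^\ell$ with $\psi_1\in\ch[k]{\psi_{k+1}}$. Then $\vphi\in\ch{\psi_1}\subset\ch[k+1]{\psi_{k+1}}\subset\ch[k+1]{\calL^\ell}$, completing the induction.

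The only delicate point I anticipate is the bookkeeping in part~\ref{l:H_L_props:H_desc_of_L}: one must state and apply the ancestry chain for elements of $\calL$, not just of $\calH$, because $\calH$ is not closed under taking parents. This is however a direct consequence of the defining inclusion $\calL\subset\calB^0\cup\ch{\calL}$, which provides a parent in $\calL$ for every element of $\calL$ of positive level. Once this is made explicit, both inductions are routine.
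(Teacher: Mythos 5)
Your proposal is correct and follows essentially the same route as the paper: part~(i) by downward induction on the level using Lemma~\ref{l:cp} and the decomposition $\calC=\calH\dcup\calL$, and part~(ii) by induction on $k$ resting on the key inclusion $\calL^{j+1}\subset\calC^{j+1}=\ch{\calL^{j}}$. The only cosmetic difference is that in part~(ii) you iterate the ``parent in $\calL$'' relation explicitly to build the ancestry chain, whereas the paper applies the inductive hypothesis at level $\ell+1$ and then uses $\calL^{\ell+1}\subset\ch{\calL^\ell}$ once; these are the same argument organized in opposite order.
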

\begin{proof}
  We thus prove (i) by 
(backward) induction on the level of $\psi$. Let $N = \depth \calL$ and $\psi 
\in \calL^{N-1}$, i.e., $\psi \in \calL$, with $\l_\psi=N-1$, then no child of 
$\psi$ belongs to $\calL$, because $\calL^N = \emptyset$. Therefore, 
$\ch{\psi}\subset\calC\setminus\calL = \calH$ so $\psi \in \spn(\calH \cap 
\descs{\psi})$ due to Lemma~\ref{l:cp}.
  Suppose now that the assertion is true for all functions in $\calL^{N-j}$ 
with 
$0\le j < N$. Let $\psi \in \calL^{N-(j+1)}$. Lemma~\ref{l:cp} yields $\psi \in 
\spn{\ch \psi}$.
  Since $\calL$ is a lineage, Definition~\ref{d:lineage} implies that 
$\ch[]{\psi} \subset \calC$. Then, from Definition~\ref{d:generator}, each 
child 
of $\psi$ either belongs to $\calH$ or to $\calL^{N-j}$. Each of the latter 
belongs to $\spn(\calH\cap\descs{\psi})$ from the inductive assumption, and the 
assertion follows.
\marce{In order to prove (ii), we proceed by induction on $k$. Let 
$\ell\in\Z_0^+$, $k=1$ and $\psi \in \calH^{\ell+1}$, i.e. $\psi\in\calH$ and 
$\ell_\psi=\ell+1\geq 1$. Then $\psi\notin\calB^0$, thus $\psi \in \ch{\calL}$, 
i.e. there exist $\phi\in\calL$ with $\ell_\phi=\ell_\psi-1=\ell$ such that 
$\psi\in\ch{\phi}\subset\ch{\calL^\ell}$. Suppose now that the assertion is 
true 
for $k=m$ and $\ell \in\Z_0^+$. Let 
$\psi\in\calH^{\ell+m+1}=\calH^{(\ell+1)+m}$, then from the inductive 
assumption 
$\psi\in\ch[m]{\calL^{\ell+1}}\subset 
\ch[m]{\ch{\calL^\ell}}=\ch[m+1]{\calL^\ell}$, so the assertion follows.}
\end{proof}

\begin{remark}\label{R:spanCequalsspanH} It is worth noticing that as an immediate consequence of the previous lemma, we always have
$\spn\calL\subset\spn\calC = \spn\calH$.
\end{remark}

\begin{corollary}[\calH and \calL cell relations]\label{c:H_L_props}
For any hierarchical generator \calH and 
$\ell \in \Z_0^+$, $k \in \Z^+$, we have
\begin{enumerate}[(i)]
\item\label{c:H_L_props:L_overlap_by_dsc} If $\psi\in\calL$ then
$\bbI(\psi)\subset \cup_{k>0}\bbI^{-k}(\ch[k]{\psi}\cap\calH)$,
thus $\bbI(\calL^\l) \subset \cup_{k>0}\bbI^{-k}(\calH^{\l+k})$.
\item \label{c:H_L_props:H_overlap_by_L}$\bbI(\calH^{\l+k}) \subset 
\bbI^k(\calL^\l)$.
\end{enumerate}
\end{corollary}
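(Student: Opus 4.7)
The plan is to treat each assertion separately, with part (ii) being essentially a one-line consequence of previous lemmas and part (i) requiring a positivity argument.

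For part (ii), I will simply start from Lemma~\ref{l:H_L_props}\ref{l:H_L_props:H_desc_of_L}, which gives $\calH^{\l+k}\subset\ch[k]{\calL^\l}$. Taking $\bbI$ on both sides and using that $\bbI$ distributes over unions together with Lemma~\ref{L:children}(i), which states $\bbI(\ch[k]{\vphi})=\bbI^k(\vphi)$, I conclude
\[
\bbI(\calH^{\l+k}) \subset \bbI(\ch[k]{\calL^\l})
= \bigcup_{\psi\in\calL^\l} \bbI(\ch[k]{\psi})
= \bigcup_{\psi\in\calL^\l} \bbI^k(\psi)
= \bbI^k(\calL^\l).
\]

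For part (i), I will prove the single-function statement first; the second statement follows by taking a union over $\psi\in\calL^\l$ (noting that if $\phi\in\ch[k]{\psi}\cap\calH$ with $\l_\psi=\l$, then $\phi\in\calH^{\l+k}$). By Lemma~\ref{l:H_L_props}\ref{l:H_L_props:L_from_H}, any $\psi\in\calL$ admits an expansion
\[
\psi = \sum_{\phi\in\calH\cap\descs{\psi}} c_\phi\, \phi,
\]
and crucially the coefficients $c_\phi$ are strictly positive: this is because Lemma~\ref{l:cp}\ref{l:cp:ch_coef} provides positive coefficients at each refinement step and the inductive descent used to prove Lemma~\ref{l:H_L_props}\ref{l:H_L_props:L_from_H} only composes such positive expansions, so no cancellation can occur. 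Now fix $I\in\bbI(\psi)$; then $\psi>0$ on the interior of $I$, and since every $\phi$ is non-negative and the $c_\phi$ are positive, at least one descendant $\phi\in\calH\cap\descs{\psi}$ must be non-zero on the interior of $I$. Setting $k:=\l_\phi-\l_\psi>0$, the contrapositive of Lemma~\ref{l:out_sup} applied with $A^{\l_\psi}=\{I\}$ yields $\phi\in\bbB^k(I)$, which by Definition~\ref{d:splinesoverlap} is exactly $I\in\bbI^{-k}(\phi)\subset\bbI^{-k}(\ch[k]{\psi}\cap\calH)$, as required.

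The main obstacle I anticipate is making the positivity step fully rigorous: one needs to be sure that the expansion of $\psi$ into descendants in $\calH$ involves only positive coefficients, because otherwise a cell where $\psi>0$ could in principle be covered by cancellation between descendants where some have support and some do not. This is handled by tracing through the inductive construction in the proof of Lemma~\ref{l:H_L_props}\ref{l:H_L_props:L_from_H}: each replacement of a function by its children introduces only positive coefficients from Lemma~\ref{l:cp}\ref{l:cp:ch_coef}, and products and sums of positive numbers remain positive. Once this is acknowledged, the rest of the argument is just a translation between ``$\phi$ is nonzero somewhere in $I$'' and ``$I$ belongs to $\bbI^{-k}(\phi)$'' via Lemma~\ref{l:out_sup}.
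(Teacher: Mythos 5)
Your proof is correct and follows essentially the same route as the paper: part (ii) is the identical one-liner from Lemma~\ref{l:H_L_props}\ref{l:H_L_props:H_desc_of_L} and Lemma~\ref{L:children}, and part (i) uses the same spanning property from Lemma~\ref{l:H_L_props}\ref{l:H_L_props:L_from_H} together with a pointwise non-vanishing argument on $\mathring I$ (the paper phrases the last step via a cell $I'\in\bbI(\vphi)$ with $\ch[-k]{I'}=I$ rather than via Lemma~\ref{l:out_sup}, but these are equivalent). The only remark is that the positivity of the coefficients, while true, is not actually needed: if $\psi(x)\neq 0$ and $\psi=\sum c_\phi\phi$, then some term is already nonzero at $x$ regardless of signs, so no cancellation issue arises for the existence claim.
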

\begin{proof}
To show part \ref{c:H_L_props:L_overlap_by_dsc}, let
$\psi\in\calL$ and consider a given $I\in\bbI(\psi)$.
Then for any $x\in I^o$ as $\psi(x)\not=0$ Lemma
\ref{l:H_L_props}\ref{l:H_L_props:L_from_H} implies
that there is
$\vphi\in\ch[k]{\psi}$ such that $\vphi(x)\not=0$ thus there
is $I'\in\bbI(\vphi)$ such that $\ch[-k]{I'} = I$.
From here part \ref{c:H_L_props:L_overlap_by_dsc} follows.
In order to prove~(ii) we use  
Lemma \ref{l:H_L_props}\ref{l:H_L_props:H_desc_of_L}
to see that
$\bbI(\calH^{\l+k})\subset\bbI(\ch[k]{\calL^\l})$ and Lemma~\ref{L:children} to 
conclude that $\bbI(\ch[k]{\calL^\l}) = \bbI^k(\calL^\l)$.
\end{proof}

\pedro{Other definitions of hierarchical spline spaces are given in terms of hierarchical \emph{grids}, or sequence of nested subdomains. In those definitions it is natural to think of \emph{active cells}, which we now define.}

\begin{definition}[Active cells]\label{d:activecells}
Given a hierarchical generator $\calH$ we define the set of \emph{active cells} 
as 
$\calA := \bbI(\calC) \setminus \bbI(\calL)$.
And $\calA^\l = \calA \cap \calI^\ell$, for $\ell \in \Z_0^+$.
\end{definition}

Observe that $\calA\subset\bbI(\calH)$ but not the other way around,
The next result shows that finer B--splines in the generator
do not overlap active cells.
\begin{lemma}\label{l:H_L_props:H_overlap_by_L1}
Given a hierarchical generator $\calH$ we have
$\calH^{\l+k} \cap \bbB^k(\calA^\l)=\emptyset$ for $k \in \Z^+$ and $\ell \in 
\Z_0^+$.
In other words, 
if $\vphi\in\calH^{\l+k}$ and $I\in \calA^\l$ then
$\vphi=0$ on \seba{$I$}.
\end{lemma}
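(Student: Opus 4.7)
The plan is to derive this directly from Corollary~\ref{c:H_L_props}\ref{c:H_L_props:H_overlap_by_L}, which already tells us that $\bbI(\calH^{\ell+k}) \subset \bbI^k(\calL^\ell)$, i.e., every cell in the support of a function in $\calH^{\ell+k}$ is a $k$-descendant of some cell in $\bbI(\calL^\ell)$. The key observation is then that a cell of level $\ell+k$ has a \emph{unique} $k$-ancestor at level $\ell$, which forces a contradiction against the defining property of an active cell.

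More precisely, I would argue by contradiction: suppose there exists $\vphi \in \calH^{\ell+k} \cap \bbB^k(\calA^\ell)$. By Definition~\ref{d:splinesoverlap}, there is some $I \in \calA^\ell$ with $I \in \bbI^{-k}(\vphi)$, which by Definition~\ref{d:celloverlap} means there is a cell $I' \in \bbI(\vphi) \subset \calI^{\ell+k}$ with $I' \in \ch[k]{I}$. On the other hand, by Corollary~\ref{c:H_L_props}\ref{c:H_L_props:H_overlap_by_L}, $I' \in \bbI(\vphi) \subset \bbI^k(\calL^\ell) = \ch[k]{\bbI(\calL^\ell)}$, so there exists $J \in \bbI(\calL^\ell)$ with $I' \in \ch[k]{J}$. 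Since a cell of level $\ell+k$ has a unique $k$-ancestor at level $\ell$ (an immediate consequence of Lemma~\ref{l:cell_hie}, or simply of the $n$-adic subdivision), we conclude $I = J \in \bbI(\calL^\ell) \subset \bbI(\calL)$. This contradicts $I \in \calA^\ell = (\bbI(\calC) \setminus \bbI(\calL)) \cap \calI^\ell$.

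For the second assertion, let $\vphi \in \calH^{\ell+k}$ and $I \in \calA^\ell$. The first part, applied to the singleton $\{I\}$, gives $\vphi \notin \bbB^k(\{I\})$, so Lemma~\ref{l:out_sup} implies $\vphi = 0$ in the interior of $I$, and by continuity of B--splines the identity extends to all of $I$.

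I do not expect any serious obstacle here: the work has already been done in assembling the relationship between $\calH$, $\calL$ and $\calC$ through Lemma~\ref{l:H_L_props} and Corollary~\ref{c:H_L_props}. The only mildly delicate point is making the uniqueness of the $k$-ancestor explicit, which is immediate from the $n$-adic structure underlying Lemma~\ref{l:cell_hie}.
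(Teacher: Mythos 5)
Your proof is correct and follows essentially the same route as the paper's: both hinge on Corollary~\ref{c:H_L_props}\ref{c:H_L_props:H_overlap_by_L} together with the disjointness $\calA^\ell \cap \bbI(\calL^\ell) = \emptyset$ from Definition~\ref{d:activecells}, the paper phrasing the ancestor-uniqueness step as $\ch[k]{\calA^\ell}\cap\bbI^k(\calL^\ell)=\emptyset$ while you make it explicit via contradiction. Your added care in spelling out the unique $k$-ancestor and in invoking Lemma~\ref{l:out_sup} for the pointwise vanishing is a harmless elaboration of what the paper leaves implicit.
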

\begin{proof}
Let $\ell \in \Z_0^+$, $k \in \Z^+$ and $\vphi\in\calH^{\l+k}$, 
then by Corollary~\ref{c:H_L_props}\ref{c:H_L_props:H_overlap_by_L},
$\bbI(\vphi)\subset\bbI^k (\calL^\l)$.
From Definition~\ref{d:activecells} we have 
${\calA^\l}\cap \bbI(\calL^\l) = \emptyset$, 
and also $\ch[k]{\calA^\l}\cap \bbI^k(\calL^\l) = \emptyset$.
Therefore, $\bbI(\vphi) \cap \ch[k]{\calA^\l} = \emptyset$, 
hence $\vphi \notin \bbB^k(\calA^\l)$ owing to Lemma~\ref{l:p1}, and the 
assertion follows.
\end{proof}

\begin{lemma}[Positive spanning of the unity]\label{l:pu}
Let $\calH$ be a hierarchical generator. Then 
for each $\vphi\in\calH$
there exists a positive coefficient $c_\vphi$ such that
$\sum_{\vphi\in\calH} c_\vphi \vphi = 1$.
\end{lemma}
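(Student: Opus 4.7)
The plan is to prove the claim by induction on the depth $N = \depth(\calL)$, repeatedly using the positive parent-children recurrence from Lemma~\ref{l:cp}\ref{l:cp:ch_coef} together with the fact that $\calB^0$ is a partition of unity.

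For the base case $N = 0$, we have $\calL = \emptyset$, so $\calH = \calB^0$, and the well-known fact that $\calB^\ell$ is a partition of unity (with coefficients equal to $1$) gives the result.

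For the inductive step, assume the property holds for all hierarchical generators whose lineage has depth strictly less than $N$. Given a lineage $\calL$ with $\depth(\calL) = N$, let $\calL' := \calL \setminus \calL^{N-1}$, which is itself a lineage of depth at most $N-1$, and let $\calH'$ be the generator associated to $\calL'$. By the inductive hypothesis, there exist positive coefficients $c'_\vphi$, $\vphi \in \calH'$, with $\sum_{\vphi \in \calH'} c'_\vphi \vphi = 1$. A short check using Definitions~\ref{d:lineage} and~\ref{d:generator} shows that
\[
\calH = (\calH' \setminus \calL^{N-1}) \,\dcup\, \ch{\calL^{N-1}},
\]
with both pieces disjoint (since $\ch{\calL^{N-1}} \subset \calB^N$ while $\calH' \setminus \calL^{N-1} \subset \bigcup_{k<N}\calB^k$), and that $\calL^{N-1} \subset \calH'$ (every function in $\calL^{N-1}$ lies either in $\calB^0$ or in $\ch{\calL'}$, hence in $\calC'$, and is not in $\calL'$).

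For each $\psi \in \calL^{N-1}$, apply Lemma~\ref{l:cp}\ref{l:cp:ch_coef} to write $\psi = \sum_{\vphi \in \ch{\psi}} a_{\psi,\vphi}\,\vphi$ with $a_{\psi,\vphi}>0$. Substituting these expansions into the inductive identity yields
\[
1 = \sum_{\vphi \in \calH' \setminus \calL^{N-1}} c'_\vphi\,\vphi \;+\; \sum_{\psi \in \calL^{N-1}} c'_\psi \sum_{\vphi \in \ch{\psi}} a_{\psi,\vphi}\,\vphi,
\]
which is a linear combination of the functions of $\calH$ with strictly positive coefficients (children shared by several $\psi \in \calL^{N-1}$ simply have their contributions added, preserving positivity). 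This completes the induction.

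The only mildly delicate point is the set-theoretic bookkeeping establishing $\calH = (\calH' \setminus \calL^{N-1}) \dcup \ch{\calL^{N-1}}$ and $\calL^{N-1} \subset \calH'$; once these identities are in place, the rest is a direct substitution. There is no analytic obstacle, since positivity of the coefficients at each replacement step is guaranteed by Lemma~\ref{l:cp}\ref{l:cp:ch_coef}.
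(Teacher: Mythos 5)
Your proof is correct and is essentially the paper's argument run in reverse: the paper performs a forward induction on the level $\ell$, maintaining the invariant $1=\sum_{j=0}^{\ell}\sum_{\vphi\in\calH^j}c_\vphi\vphi+\sum_{\vphi\in\calL^\ell}\beta_\vphi\vphi$ and pushing the remainder $\calL^\ell$ down one level via the positive children expansion of Lemma~\ref{l:cp}\ref{l:cp:ch_coef}, whereas you induct on $\depth(\calL)$ and peel off the deepest layer $\calL^{N-1}$. The intermediate identities coincide (your $\calH'$ is precisely the generator of the truncated lineage implicit in the paper's invariant), and both arguments hinge on the same two facts: that $\calB^0$ is a partition of unity and that each lineage function expands into its children with strictly positive coefficients.
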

\begin{proof} 
As $\calB^0$ is a partition of unity over $\Omega$, it follows that
$1=\sum_{\vphi\in\calB^0}\vphi$. Also we know that
$\calB^0 = \calH^0 \dbigcup \calL^0$ so
$1=\sum_{\vphi\in\calH^0}\vphi + \sum_{\vphi\in\calL^0} \vphi$.
Suppose we have shown that 
$1=\sum_{j=0}^{\l}\sum_{\vphi\in\calH^j}  c_\vphi \vphi + 
\sum_{\vphi\in\calL^\l}\beta_\vphi \vphi$
with $c_\vphi>0$ for each $\vphi\in\bigcup_{j=0}^{\l} \calH^j$
and $\beta_\vphi>0$ for each $\vphi\in\calL^{\l}$.
Using Lemma~\ref{l:cp}~\ref{l:cp:ch_coef}, each $\vphi \in \calL^\ell$ can be 
spanned by its children in $\calC^{\l+1} = \calH^{\l+1} \dcup \calL^{\l+1}$ 
with 
positive coefficients, so that
$\sum_{\vphi\in\calL^\l}\beta_\vphi \vphi
= \sum_{\vphi\in\calH^{\l+1}}  c_\vphi \vphi + 
\sum_{\vphi\in\calL^{\l+1}}\beta_\vphi \vphi $, with $c_\vphi,\beta_\vphi > 0$, 
and thus
$1=\sum_{j=0}^{\l+1}\sum_{\vphi\in\calH^j}  c_\vphi \vphi + 
\sum_{\vphi\in\calL^{\l+1}}\beta_\vphi \vphi$.
\end{proof}

\section{Refinement}\label{s:refinement}
Lineages provide a convenient framework to define a 
concept of refinement that will allow us to rigorously study
the process. The germ is the following definition.
\begin{definition}[Refinements and refiner sets]\label{d:refinement}
 We say that a lineage $\calL_*$ is a \emph{refinement} of the lineage $\calL$ 
whenever  $\calL \subset \calL_*$, and we denote it with $\calL_*\refi\calL$.
 The set difference $\calR = \calL_*\setminus\calL$ is called the \emph{refiner 
set}
 of the refinement.
 Accordingly (in light of Lemma \ref{l:bijection}) we say that a hierarchical 
generator $\calH_*$ is a refinement of $\calH$, and denote it with 
$\calH_*\refi\calH$, whenever $\calL_*\refi\calL$.
\end{definition}

\begin{remark}[Conventional notation]\label{r:convention}
	From now on, unless explicitly stated, 
	whenever we say that $\calL$, $\calL_*$ are lineages, without further stating,
	$\calH$, $\bbV$, $\calH_*$, $\bbV_*$ will denote their corresponding 
hierarchical generators and spaces, respectively, and vice versa. Moreover, if 
$\calL_* \refi \calL$, $\calR = \calL_* \setminus \calL$ will be the refiner 
set.
\end{remark}

As an immediate consequence of Lemma~\ref{l:H_L_props} and the fact that 
$\calL_* \refi \calL$ yields $\calC_* \supset \calC$  
we have that 
$\bbV_*\supset\bbV$ due to Remark~\ref{R:spanCequalsspanH}.
But notice that $\calH_*\refi\calH$ does not necessarily imply that 
$\calH_*\supset\calH$.

\subsection{Order on Refinements and the Smallest Element}
The set of all lineages with the inclusion relation is a partially ordered set 
(POSET). 
A \emph{minimal element} of a subset $S$ of some POSET is defined as an element 
of $S$ that is not greater than any other element in $S$.
The \emph{least element} is an element of $S$ that is smaller than every other 
element of $S$.
A set can have several minimal elements without having a least element. 
However, 
if it has a least element, it can't have any other minimal element.
As the family of lineages is a POSET and there is a one to one 
correspondence with the family of hierarchical generators ($\calH_* \refi 
\calH$ 
iff $\calL_* \supset \calL$) we transfer the partial order from the lineages to 
the hierarchical generators.
More precisely.
\begin{property}[Generators are partially ordered by refinement]
 The ``being refinement of'' relation $\refi$ is a partial order
 in the family of hierarchical generators.
\end{property}

The family of hierarchical generators has a least element $\calH=B^0$, which 
corresponds to $\calL=\emptyset$.

The approach to refinement as a partial order allows us to rigorously pose the 
problem of finding the smallest (minimal or least) refinement of 
\calH that satisfies some given property.
For example, if given a hierarchical generator \calH we define 
$\textgoth{B}(\calH):=\{\calH_*\refi\calH : \calH_* \text{ is linearly 
independent} \}$,
we can ask what is $\min \textgoth{B}(\calH)$, the set of minimal elements;
in some cases of interest it can be a singleton with only the least element.

\subsection{Algebra of a refinement}

The algebra of set inclusion can be applied to
Definition \ref{d:refinement} and obtain some useful properties with simple 
proofs.
\begin{lemma}[Basic properties]\label{l:ref_identities}
 Let $\calH_* \refi \calH$ with $\calL_*$, $\calL$ the
corresponding lineages and \calR the refiner set,
and let $\calM:=\calR\cap\calH$.
 Then 
 \begin{enumerate}[(i)]
  \item \label{l:ref_identities:HrminusH}
  $\calH_*\setminus\calH=\ch{\calR}\setminus(\calC\cup\calR)$

  \item \label{l:ref_identities:HminusHr}
  $\calH_*\cap\calH = \calH\setminus\calR$

  \item \label{l:ref_identities:HminusHstar}
  $\calH\setminus\calH_* = \calM$

\item \label{l:ref_identities:RminusH}
$\calR\setminus\calH = \calR\setminus\calC$

\item \label{l:ref_identities:chR}$\calR \setminus \calH \subset 
\ch{\calR}\setminus\calC$

\item \label{l:ref_identities:R}
$\calR \subset (\ch{\calR}\setminus \calC) \dcup \calM$


 \end{enumerate}
\end{lemma}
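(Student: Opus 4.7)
My plan is to prove all six identities by direct set-algebra manipulation starting from the definitions, using repeatedly the following basic facts that I would record at the start: (a) $\calL_* = \calL \dcup \calR$ (disjoint because $\calR = \calL_*\setminus\calL$), so in particular $\calR\cap\calL=\emptyset$; (b) $\ch{\calL_*} = \ch{\calL}\cup\ch{\calR}$, hence $\calC_* = \calB^0\cup\ch{\calL}\cup\ch{\calR} = \calC\cup\ch{\calR}$; (c) since $\calL$ is a lineage, $\calL\subset\calC$, and $\ch{\calL}\subset\calC$ by definition of $\calC$; (d) $\calC\subset\calC_*$ and therefore $\bbV\subset\bbV_*$ as already observed in the text.

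For (i) I would start from $\calH_*\setminus\calH = (\calC_*\setminus\calL_*)\setminus(\calC\setminus\calL)$ and split cases according to whether an element lies in $\calC$ or not. Since $\calL\subset\calL_*$, no element of $\calH_*$ lies in $\calL$, so the set simplifies to $(\calC_*\setminus\calC)\setminus\calL_*$. By fact (b), $\calC_*\setminus\calC = \ch{\calR}\setminus\calC$, and removing $\calL_*$ from this becomes removing $\calR$ (since $\calL\subset\calC$ was already removed). Combining the two exclusions yields $\ch{\calR}\setminus(\calC\cup\calR)$. For (ii) I observe that any $\vphi\in\calH$ already lies in $\calC\subset\calC_*$ and not in $\calL$, so $\vphi\in\calH_*$ iff $\vphi\notin\calR$, giving $\calH\setminus\calR$. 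Item (iii) is immediate from (ii): $\calH\setminus\calH_* = \calH\setminus(\calH\setminus\calR) = \calH\cap\calR = \calM$.

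For (iv) I use $\calR\cap\calL=\emptyset$ from fact (a): an element of $\calR$ already avoids $\calL$, so removing $\calH = \calC\setminus\calL$ from $\calR$ is the same as removing $\calC$. For (v), given $\vphi\in\calR\setminus\calC$, since $\vphi\in\calL_*$ and $\calL_*$ is a lineage, $\vphi\in\calB^0\cup\ch{\calL_*} = \calB^0\cup\ch{\calL}\cup\ch{\calR}$; the first two alternatives are ruled out by $\vphi\notin\calC$, so $\vphi\in\ch{\calR}$, and (iv) finishes the inclusion. Finally (vi) follows by decomposing $\calR = (\calR\cap\calH)\dcup(\calR\setminus\calH) = \calM \dcup (\calR\setminus\calH)$, inserting (v), and noting the union is disjoint because $\calM\subset\calH\subset\calC$ while the other piece avoids $\calC$.

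There is no real obstacle here; the only mild subtlety is being careful that the distinctions between $\calL_*$ versus $\calR$ and between $\calC_*$ versus $\ch{\calR}$ are always mediated by the fact $\calL\subset\calC$ and the disjointness $\calR\cap\calL=\emptyset$. I would present the six identities in the given order so that each one can reuse the previous, in particular using (ii) to derive (iii) and (iv) to derive (v) and (vi).
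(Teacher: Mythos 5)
Your proposal is correct and follows essentially the same route as the paper's proof: direct set algebra built on the facts $\calL_*=\calL\dcup\calR$, $\calC_*=\calC\cup\ch{\calR}$ and $\calL\subset\calC$, with (iii) derived from (ii) and (vi) from (v) exactly as in the text. The only difference is cosmetic — you argue elementwise where the paper invokes explicit set identities such as $(A\setminus B)\setminus(C\setminus D)=(A\setminus(B\cup C))\cup((A\cap D)\setminus B)$ — so no further comment is needed.
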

\begin{proof}
Using that $\calH = \calC\setminus\calL$, $\calH_* = \calC_*\setminus\calL_*$, 
and the set identity 
\begin{equation}\label{e:setminus}
(A\setminus B)\setminus(C\setminus D) = (A\setminus (B\cup C))\cup((A\cap 
D)\setminus B)
\end{equation}
we get
\begin{equation*}
 \calH_*\setminus\calH = (\calC_*\setminus\calL_*) \setminus 
(\calC\setminus\calL) 
                       =(\calC_*\setminus (\calL_*\cup \calC))\cup
                        \underbrace{((\calC_*\cap \calL)\setminus 
\calL_*)}_{\subset \calL \setminus \calL_* =\emptyset}
                        =\calC_*\setminus(\calC\cup\calR),
\end{equation*}
where in the last equality we have used that $\calL_*\cup \calC = \calL \cup 
\calR \cup \calC = \calC\cup\calR$, because $\calL \subset \calC$. Finally,  
$\calC_*\setminus(\calC\cup\calR)= 
(\ch{\calL_*}\cup\calB_0)\setminus(\ch{\calL}\cup\calB_0\cup\calR)= 
(\ch{\calL_*}\setminus\ch{\calL})\setminus(\calC\cup\calR)$ 
and~\ref{l:ref_identities:HrminusH} follows.

Identity~\ref{l:ref_identities:HminusHr} follows from the set identity 
$(A \setminus B) \cap (C \setminus D) = (A\cap C) \setminus (B\cup D)$ 
and the fact that $\calC\subset\calC_*$ and $\calL\subset\calL_*$. Indeed,
\begin{equation}
\begin{aligned}
 \calH_*\cap\calH &= (\calC_*\setminus\calL_*) \cap (\calC\setminus\calL)
                   = (\calC_*\cap\calC) \setminus (\calL_*\cup\calL)\\
                  &=\calC\setminus\calL_* = \calC\setminus(\calL\cup\calR)
                   =(\calC\setminus\calL)\setminus\calR = \calH\setminus\calR.
\end{aligned}
\end{equation}

For~\ref{l:ref_identities:HminusHstar} observe that $\calH\setminus\calH_* 
=\calH\setminus(\calH_*\cap\calH)$
so from~\ref{l:ref_identities:HminusHr},
\[
\calH\setminus\calH_* = \calH\setminus (\calH\setminus\calR) = 
\calH\cap\calR = \calM.
\]
Using the set identity $ A\setminus(B\setminus C) = (A\setminus B)\cup(A\cap 
C)$,
we conclude
\[
\calR\setminus\calH = \calR\setminus(\calC\setminus\calL) = 
(\calR\setminus \calC)\cup(\calR\cap \calL) = \calR\setminus\calC,
\]
because $\calL \cap \calR = \emptyset$ and~\ref{l:ref_identities:RminusH} 
follows.

To prove \ref{l:ref_identities:chR} we use~\ref{l:ref_identities:RminusH} and 
the fact that $\calL_*$ is a lineage, to conclude that
\begin{align*}
\calR \setminus \calH 
&=\calR \setminus \calC 
\subset \calL_*\setminus \calC  
\subset (\ch[]{\calL_*} \cup \calC)\setminus \calC 
= \ch[]{\calL_*} \setminus \calC 
\\
&\subset \ch[]{\calL_*} \setminus (\calC\cup\ch[]{\calL})
\subset (\ch[]{\calL_*} \setminus \ch[]{\calL} ) \setminus \calC
\subset \ch[]{\calR} \setminus \calC.
\end{align*}

In order to prove~\ref{l:ref_identities:R} observe that $\calR = (\calR 
\setminus \calH) \dcup (\calR \cap \calM) = (\calR\setminus\calH) \dcup \calM$ 
and use~\ref{l:ref_identities:chR}.
\end{proof}

\subsection{Single refinement}

Here we show that Definition \ref{d:refinement} of refinement is
in fact equivalent to the natural one of refining one function at a time.

Refining a function in a hierarchical generator
means to find the smallest refinement for which that function
is in the refiner set. More precisely, 
\begin{definition}[Refinement of one function]\label{d:single_ref}
Let \calH be a hierarchical generator and $\vphi\in\calH$,
the refinement of $\vphi$ is the least element of the 
family
$\{\bar\calH : \bar\calH\refi\calH \text{ and } \vphi\in\bar\calL \}$.
\end{definition}

The next Lemma shows that the definition is well posed.
\begin{lemma}[Good definition]\label{l:good_def_sr}
Let \calH be a hierarchical generator and $\vphi\in\calH$,
the least element of 
$\{\bar\calH : \bar\calH\refi\calH \text{ and } \vphi\in\bar\calL \}$
is the hierarchical generator whose lineage is 
${\calL_*} = {\calL} \cup \{\vphi\}$, namely
$\calH_* = \calH\setminus\{\vphi\} \dcup (\ch{\vphi}\setminus\ch{\calL})$.   
\end{lemma}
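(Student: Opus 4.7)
The plan is to define $\calL_* := \calL \cup \{\vphi\}$ explicitly and then verify three things: that $\calL_*$ is in fact a lineage, that the associated generator $\calH_*$ has the stated form, and that any lineage $\bar\calL$ with $\bar\calL\supset\calL$ and $\vphi\in\bar\calL$ must contain $\calL_*$ (so $\calH_*$ is the least element).

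First I would check that $\calL_*$ is a lineage, i.e.\ that $\calL_*\subset\calB^0\cup\ch{\calL_*}$. Every element of $\calL$ already lies in $\calB^0\cup\ch{\calL}\subset\calB^0\cup\ch{\calL_*}$ since $\calL$ is a lineage; and $\vphi\in\calH\subset\calC=\calB^0\cup\ch{\calL}\subset\calB^0\cup\ch{\calL_*}$ by Definition~\ref{d:generator}. Trivially $\calL_*\refi\calL$ and $\vphi\in\calL_*$, so $\calH_*$ belongs to the family in Definition~\ref{d:single_ref}.

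Next I would identify $\calH_*$ by applying Lemma~\ref{l:ref_identities} with the refiner set $\calR=\{\vphi\}$ and therefore $\calM=\calR\cap\calH=\{\vphi\}$. Part~\ref{l:ref_identities:HminusHstar} of that lemma gives $\calH\setminus\calH_*=\{\vphi\}$, and part~\ref{l:ref_identities:HrminusH} gives
\[
\calH_*\setminus\calH = \ch{\vphi}\setminus(\calC\cup\{\vphi\}) = \ch{\vphi}\setminus\calC,
\]
because children of $\vphi$ have level $\ell_\vphi+1\ge 1$ so $\vphi\notin\ch{\vphi}$. Since those children also lie strictly above level $0$, they do not belong to $\calB^0$, hence $\ch{\vphi}\setminus\calC=\ch{\vphi}\setminus\ch{\calL}$. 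Combining,
\[
\calH_* = (\calH_*\cap\calH)\,\dcup\,(\calH_*\setminus\calH) = \bigl(\calH\setminus\{\vphi\}\bigr)\,\dcup\,\bigl(\ch{\vphi}\setminus\ch{\calL}\bigr),
\]
and disjointness follows because any $\psi\in\ch{\vphi}\setminus\ch{\calL}$ has level $\ge 1$ and is not in $\ch{\calL}$, hence not in $\calC$, hence not in $\calH$.

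Finally, the minimality is immediate from the order structure: if $\bar\calH\refi\calH$ with $\vphi\in\bar\calL$, then $\bar\calL\supset\calL$ by Definition~\ref{d:refinement} and $\bar\calL\ni\vphi$, so $\bar\calL\supset\calL\cup\{\vphi\}=\calL_*$, i.e.\ $\bar\calH\refi\calH_*$. Thus $\calH_*$ is the least element of the family. I do not expect a serious obstacle: the only subtle point is correctly deducing $\ch{\vphi}\setminus\calC=\ch{\vphi}\setminus\ch{\calL}$, which is a one-line level argument, and the rest is a direct application of the set-algebra identities already collected in Lemma~\ref{l:ref_identities}.
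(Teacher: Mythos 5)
Your proposal is correct and follows essentially the same route as the paper: verify that $\calL\cup\{\vphi\}$ is a lineage via $\calL_*\subset\calC\subset\calC_*$, derive the formula for $\calH_*$ from the identities of Lemma~\ref{l:ref_identities} with $\calR=\{\vphi\}$, and get minimality from the fact that $\calL\cup\{\vphi\}$ is the smallest set containing both. The only difference is cosmetic: you spell out the one-line level argument for $\ch{\vphi}\setminus\calC=\ch{\vphi}\setminus\ch{\calL}$, which the paper leaves implicit.
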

\begin{proof}
    All we have to prove here is that if $\calL$ is a lineage and $\vphi$ is an 
element of the corresponding hierarchical generator $\calH$, then ${\calL_*} = 
{\calL} \cup \{\vphi\}$ is also a lineage. This is very simple, since 
$\vphi \in \calH = \calC \setminus \calL\subset \calC$ and $\calL$ is a lineage 
($\calL \subset \calC$) we have
    \[
    \calL_* = \calL \cup \{\vphi\} \subset \calC\cup \{\vphi\} = \calC 
     = \calB^0 \cup \ch[]{\calL} \subset \calB^0 \cup \ch[]{\calL_*} = \calC_*.
    \]
    The minimality is a consequence of the fact that the smallest set that 
contains $\calL$ and $\vphi$ is $\calL \cup \{\vphi\}$.
To find expressions for $\calH_*$ we use Lemma~%
\ref{l:ref_identities}
\ref{l:ref_identities:HrminusH}--\ref{l:ref_identities:HminusHr}
as follows
\begin{align*}
\calH_* &=  (\calH_* \cap \calH) \dcup (\calH_* \setminus \calH)
=  (\calH \setminus \calR) \dcup \big(\ch[]{\calR} \setminus (\calC \cup 
\calR)\big)
\\
&=  (\calH \setminus \{\vphi\}) \dcup (\ch[]\vphi \setminus \calC ) 
= (\calH \setminus \{\vphi\}) \dcup (\ch[]\vphi \setminus \ch\calL ).
\end{align*}
\end{proof}

\begin{remark}[Single refinement $=$ adding more resolution]
According to Lemma \ref{l:good_def_sr},
refining a B--spline function $\vphi$ consists in substituting it
by its children that are not already in the generator.
We have added the smallest number of children that are
necessary to span $\vphi$.
\end{remark}

A constructive process to build $\calL_*$ is called \Call{SingleRefine}{}
and it is described in Algorithm \ref{a:single_ref} below, which we include 
despite its simplicity as it will be called from more complex algorithms later 
on.
\begin{algorithm}[!h]
\caption{Refine one function}\label{a:single_ref}
\begin{algorithmic}[1]
\Function{SingleRefine}{$\calH$,$\vphi$}
\Require $\calH$ a hierarchical generator and $\vphi\in\calH$
\State ${\calL} \gets {\calL} \bigcup \{\vphi\}$
\EndFunction
\end{algorithmic}
\end{algorithm}

\begin{lemma}[Single refinement]\label{l:single_ref}
Let $\calH$ be a hierarchical generator with lineage $\calL$ and
$\vphi\in\calH$.
A call to \Call{SingleRefine}{$\calH$,$\vphi$}
modifies the set \calH, as \calL is modified [see Remark \ref{r:convention}].
Let the original set before the call be $\calH$, and $\calH_*$ its modification
after Algorithm \ref{a:single_ref} is executed.
Then $\calH_*$ is a refinement of the original $\calH$ with refiner 
$\calR=\{\vphi\}$.
Furthermore, $\calH_* = \calH \setminus \{\vphi\} \cup (\ch{\vphi}\setminus 
\calC^{\l_\vphi+1})$

\end{lemma}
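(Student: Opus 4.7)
The plan is to verify that the very simple algorithm does exactly what Definition~\ref{d:single_ref} requires, so this lemma is essentially a corollary of Lemma~\ref{l:good_def_sr}. First I would observe that after the call to \Call{SingleRefine}{$\calH$, $\vphi$}, the resulting lineage is exactly $\calL_* = \calL \cup \{\vphi\}$ by inspection of Algorithm~\ref{a:single_ref}. Lemma~\ref{l:good_def_sr} then immediately gives that $\calL_*$ is a lineage, that $\calH_* \refi \calH$, and that the refiner set is $\calR = \calL_*\setminus\calL = \{\vphi\}$, together with the representation $\calH_* = (\calH\setminus\{\vphi\}) \dcup (\ch{\vphi}\setminus \ch{\calL})$.

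The only remaining task is to rewrite $\ch{\vphi}\setminus\ch{\calL}$ as $\ch{\vphi}\setminus\calC^{\ell_\vphi+1}$. For this I would use that $\ch{\vphi}\subset\calB^{\ell_\vphi+1}$, together with $\ell_\vphi+1 \ge 1$, so $\calB^0 \cap \calB^{\ell_\vphi+1} = \emptyset$ and therefore
\[
\calC^{\ell_\vphi+1} = (\calB^0\cup\ch{\calL})\cap\calB^{\ell_\vphi+1} = \ch{\calL}\cap\calB^{\ell_\vphi+1}.
\]
Intersecting with $\ch{\vphi}\subset\calB^{\ell_\vphi+1}$ gives $\ch{\vphi}\cap\calC^{\ell_\vphi+1} = \ch{\vphi}\cap\ch{\calL}$, whence $\ch{\vphi}\setminus\ch{\calL} = \ch{\vphi}\setminus\calC^{\ell_\vphi+1}$, and the claimed formula for $\calH_*$ follows.

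There is really no significant obstacle here: the content was already packed into Lemma~\ref{l:good_def_sr} and the algebra of Lemma~\ref{l:ref_identities}. The only subtlety worth double-checking is the level bookkeeping in the last paragraph, to ensure that the conversion from $\ch{\calL}$ to $\calC^{\ell_\vphi+1}$ is justified (which hinges on the elementary observation that the children of $\vphi$ all live at level $\ell_\vphi+1 \ge 1$, so no $\calB^0$ contribution enters).
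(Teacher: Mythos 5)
Your argument is correct and is essentially the paper's own route: the paper gives no separate proof of this lemma, treating it as an immediate consequence of Lemma~\ref{l:good_def_sr}, exactly as you do. Your extra step identifying $\ch{\vphi}\setminus\ch{\calL}$ with $\ch{\vphi}\setminus\calC^{\l_\vphi+1}$ via the disjointness of the levels is the small piece of bookkeeping the paper leaves implicit, and it is carried out correctly.
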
 
\begin{remark}[Implementation tip]
The modification for \calH is to remove $\vphi$ and
add $\ch{\vphi}$ that are not in $\ch[]{\calL}$. But the only children of 
$\vphi$ that could belong to $\ch[]{\calL}$ are those who are in 
$\ch{(\ch[-1]{(\ch{\vphi})}\cap\calL)}$. 
This set looks more complicated than $\ch[]{\calL}$ but is in fact much smaller.
\end{remark}

\subsection{Refining a set of functions}
The process of refining one function can be naturally extended to
refining a subset \calM of \calH.
This can be defined as finding the smallest refinement of \calH whose
refiner contains \calM.
\begin{algorithm}[h]
\caption{Refine a set of functions}\label{a:refine0}
\begin{algorithmic}[1]
\Function{Refine}{$\calH$,$\calM$}
\Require $\calM \subset \calH$
\For{$\vphi \in \calM$}
\State \Call{SingleRefine}{$\calH$,$\vphi$}
\EndFor
\EndFunction
\end{algorithmic}
\end{algorithm}
\begin{lemma}[Refining a subset of \calH]\label{l:refine0}
Let $\calH$ be a hierarchical generator and $\calM\subset\calH$.
A call to \Call{Refine}{$\calH$, $\calM$} of Algorithm \ref{a:refine0} 
finishes modifying the set \calH.
Let the original set before the call be $\calH$, and $\calH_*$ its 
modification after Algorithm \ref{a:refine0} is executed.
Then $\calH_*$ is a refinement of $\calH$ and
its refiner is $\calR=\calM$ ($\calL_* = \calL \dcup \calM$).
In particular, the same hierarchical generator $\calH_*$ is
obtained independent of the order in which the functions
of $\calM$ are passed to \Call{SingleRefine}{}.
Furthermore this is the smallest refinement of \calH that refines
all functions in \calM.
\end{lemma}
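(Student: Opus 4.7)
The plan is to proceed by induction on the number of iterations of the \textbf{for} loop in Algorithm~\ref{a:refine0}. Fix any enumeration $\vphi_1,\dots,\vphi_N$ of $\calM$ (where $N=\#\calM$) and let $\calH_k$, $\calL_k$ denote the hierarchical generator and its lineage after the $k$-th pass. The claim to establish inductively is that $\calL_k = \calL \cup \{\vphi_1,\dots,\vphi_k\}$ for every $k$. The base case $k=1$ is exactly Lemma~\ref{l:good_def_sr}, applied to the initial $\calH_0=\calH$ and $\vphi_1 \in \calM \subset \calH$.

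For the inductive step, in order to legitimately invoke Lemma~\ref{l:good_def_sr} at iteration $k+1$ I must verify that $\vphi_{k+1}$ still belongs to the current generator $\calH_k$. Since $\vphi_{k+1} \in \calH = \calC \setminus \calL$, on the one hand $\vphi_{k+1} \in \calC \subset \calC_k$ (the inclusion $\calL \subset \calL_k$ trivially propagates to $\calC \subset \calC_k$ via $\ch[]{\cdot}$), and on the other hand $\vphi_{k+1} \notin \calL_k = \calL \cup \{\vphi_1,\dots,\vphi_k\}$, because $\calM \cap \calL = \emptyset$ (from $\calM \subset \calH$) and the enumeration is injective. Thus $\vphi_{k+1}\in \calC_k \setminus \calL_k = \calH_k$, and Lemma~\ref{l:good_def_sr} applies to give $\calL_{k+1} = \calL_k \cup \{\vphi_{k+1}\} = \calL \cup \{\vphi_1,\dots,\vphi_{k+1}\}$, closing the induction.

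Taking $k=N$ yields $\calL_* = \calL \cup \calM$, which is a disjoint union because $\calM \cap \calL = \emptyset$, so the refiner is $\calR = \calL_* \setminus \calL = \calM$, as claimed. Order independence is immediate from the symmetry of the final expression $\calL \cup \calM$ in the elements of $\calM$. For minimality, suppose $\bar\calH \refi \calH$ with $\bar\calR \supset \calM$; then $\bar\calL = \calL \cup \bar\calR \supset \calL \cup \calM = \calL_*$, i.e., $\bar\calH \refi \calH_*$ by Definition~\ref{d:refinement}. The only delicate point in the argument is the verification that each intermediate \Call{SingleRefine}{} call remains well posed, but this reduces to the monotonicity $\calL \subset \calL' \Rightarrow \calC \subset \calC'$; no genuine obstacle is anticipated.
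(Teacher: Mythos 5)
Your proof is correct and follows essentially the same route as the paper's: induction over the sequence of \Call{SingleRefine}{} calls, verifying at each step that the next function still lies in the current generator (the paper carries the invariant $\{\vphi_{i},\dots,\vphi_N\}\subset\calH_i$ along in the induction, whereas you re-derive it from the explicit formula for $\calL_k$, which amounts to the same thing). The order-independence via the lineage--generator bijection and the minimality via set inclusion of lineages also match the paper's argument.
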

\begin{proof}
 To show that Algorithm \ref{a:refine0} finishes successfully
 we must ensure that the precondition of Algorithm \ref{a:single_ref}
 is satisfied.
 Let us order the elements of \calM in a sequence $(\vphi_0,\dots,\vphi_N)$
 and call \Call{SingleRefine}{$\calH$,$\vphi_i$} following that order.
 Let $\calH_0=\calH$ before the first call, 
 and $\calH_{i+1}$ the state of \calH after the $i$-th call.
 Now we proceed by induction.
 Clearly, $\calH_0$ is a hierarchical generator, $\calL_0=\calL$
 and $\{\vphi_0,\dots,\vphi_N\}\subset\calH_0$.
 Assume now that $\calH_i$ is a hierarchical generator, 
 $\calL_i=\calL\dcup\{\vphi_0,\dots,\vphi_{i-1}\}$
 and $\{\vphi_i,\dots,\vphi_N\}\subset\calH_i$.
 Under these conditions Lemma \ref{l:single_ref} states that 
 $\calH_{i+1}$ is a hierarchical generator, 
 $\calL_{i+1}=\calL_i\dcup\{\vphi_{i}\}=\calL\dcup\{\vphi_0,\dots,\vphi_{i}\}$,
 and that $\{\vphi_{i+1},\dots,\vphi_N\}\subset\calH_{i+1}$.

Thus we have shown by induction that for any order of the function in $\calM$
the sequential execution of \Call{SingleRefine}{} will finish giving
a hierarchical generator with lineage $\calL_*=\calL\cup\calM$.
Now this lineage is the same independent of order given to the functions of
\calM, so by Lemma \ref{l:bijection} they all give the same and unique
hierarchical generator $\calH_*$.

That this is the smallest refinement follows trivially from the fact
that $\calL\cup\calM$ is a lineage.
\end{proof}

The most trivial process to construct a hierarchical generator is 
by a sequence of single refinements starting from $\calB^0$.
More precisely, let $N$ be a natural number, $\calH_0 = \calB^0$,
and for $i\in[0:N-1]$ let $\vphi_i\in\calH_i$ and 
$\calH_{i+1}$ the output of \Call{SingleRefine}{$\calH_i,\vphi_i$}.
As $\calB^0$ is a hierarchical generator (with an empty lineage) repeated
application of Lemma \ref{l:single_ref} implies that $\calH_{N}$ is a
hierarchical generator with lineage  $\calL = \{\vphi_0,\dots,\vphi_{N-1}\}$.
What is more interesting is that any hierarchical generator can be obtained in
this way, thus justifying the more ``abstract'' definition of lineages
given in Definition~\ref{d:lineage}.
\begin{lemma}[Lineages and refinements]
 A sequence of single refinements starting from $\calB^0$ yields
 a hierarchical generator 
 and reciprocally any hierarchical generator can be obtained by a 
 sequence of single refinements.
\end{lemma}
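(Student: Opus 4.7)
The plan is to split the proof into the two assertions. The direct implication is essentially the induction already carried out inside the proof of Lemma~\ref{l:refine0}: starting from $\calH_0=\calB^0$ (a hierarchical generator with empty lineage) and calling \Call{SingleRefine}{$\calH_i,\vphi_i$} at each step, Lemma~\ref{l:single_ref} guarantees that $\calH_{i+1}$ is again a hierarchical generator, provided the precondition $\vphi_i\in\calH_i$ is met at every step. A simple induction therefore settles this half.

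The substantive direction is the converse. Given a hierarchical generator $\calH$ with lineage $\calL$ and depth $N=\depth(\calL)$, I would enumerate $\calL$ \emph{by increasing level}: first the functions of $\calL^0$ (in any order), then those of $\calL^1$, and so on up to $\calL^{N-1}$, and feed this sequence to \Call{SingleRefine}{} starting from $\calB^0$.

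The key step is to verify the precondition of \Call{SingleRefine}{} at every stage. I would argue inductively that after processing a prefix of the enumeration the current lineage has the shape $\calL_i=(\calL^0\cup\dots\cup\calL^{\l-1})\dcup S$ with $S\subset\calL^\l$ for some $0\le\l<N$. The next function $\vphi$ lies either in $\calL^\l\setminus S$ or in $\calL^{\l+1}$; in either case $\vphi\notin\calL_i$ by construction, and $\vphi$ belongs to the current $\calC$-set $\calC_i=\calB^0\cup\ch{\calL_i}$: if $\ell_\vphi=0$ then trivially $\vphi\in\calB^0$, and if $\ell_\vphi\ge 1$, since $\calL$ is itself a lineage, $\vphi\in\ch{\calL^{\ell_\vphi-1}}\subset\ch{\calL_i}$. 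Hence $\vphi\in\calC_i\setminus\calL_i$, the current hierarchical generator, and the call is legitimate.

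Once the precondition is established at each step, a repeated application of Lemma~\ref{l:single_ref} shows that after processing the whole enumeration the resulting lineage is exactly $\calL$, and by the bijection of Lemma~\ref{l:bijection} the final hierarchical generator coincides with $\calH$. The only (mild) obstacle is realizing that one cannot refine functions in a completely arbitrary order: a function of level $\l\ge 1$ does not belong to $\calB^0$ and hence cannot be refined before its parent appears in the lineage; ordering the functions of $\calL$ by level sidesteps this cleanly and makes the precondition transparently hold.
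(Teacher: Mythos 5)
Your proof is correct and follows essentially the same route as the paper: both order the lineage by increasing level and use the lineage property ($\calL^{\ell}\subset\ch{\calL^{\ell-1}}\cup\calB^0$) to verify that each function to be refined lies in the current generator before it is passed to the refinement routine. The only cosmetic difference is that the paper batches each level through \Call{Refine}{} (invoking Lemma~\ref{l:refine0}, which is itself just a sequence of single refinements), whereas you inline the calls to \Call{SingleRefine}{} directly.
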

\begin{proof}
The first statement of the lemma was shown in the previous paragraph above.
For the second statement let $\bar\calH$ be a hierarchical generator 
and $\bar\calL$ its lineage.
Define $\calM_i = \bar\calL^i$ for $i\in(0:\depth(\bar\calL)-1)$.
Let $\calH_0=\calB_0$ and $\calH_{i+1}$ the output of
\Call{Refine}{$\calH_i$,$\calM_i$}.
Clearly $\calH_0$ is a hierarchical generator, 
$\calL_0=\emptyset$, $\calM_0\subset\calH_0$.
Assume $\calH_n$ is a hierarchical generator, 
$\calL_n=\dcup_{i\in[0:n-1]}\calM_i$ 
and $\calM_n\subset\calH_n$.
Using Lemma \ref{l:refine0} it follows that
$\calH_{n+1}$ is a hierarchical generator, 
$\calL_{n+1}=\dcup_{i\in[0:n]}\calM_i$.
Now $\calM_{n+1} = \bar\calL^{n+1} \subset \ch{\bar\calL^{n}} = \ch{\calM_n}$,
then $\calM_{n+1}\subset\calC_{n+1}$ as $\calL_{n+1}^{n+1}=\emptyset$ it
follows that $\calM_{n+1}\subset\calH_{n+1}$.
Thus we have shown that $\calH$ is a hierarchical generator
that was obtained by a sequence of single refinements
with lineage $\calL=\dcup_{i\in(0:\depth(\bar\calL)-1)}\calM_i =
\bar\calL$, so using Lemma \ref{l:bijection} $\calH=\bar\calH$.
\end{proof}

\subsection{Origin of new functions}
\pedro{Given a refinement $\calH_*$ of $\calH$, one intuitively expects that any function in the refiner set $\calR = \calL_* \setminus \calL$ has been originated by a refined function in $\calH$, i.e., from $\calR \cap \calH$. Similarly, if a function is in the refiner set $\calR$ it must have generated a new function in $\calH_*$.}
\pedro{This relation is important to obtain complexity results relating the number of marked functions and the dimension of the hierarchical spaces in the context of an adaptive loop, and}
we elaborate on this below.

\begin{lemma}[New function cause]\label{l:ref_cause}
Let \calH be a hierarchical generator, let
$\calH_*\refi\calH$ with refiner set \calR,
and $\calM:=\calR\cap\calH$ then
\begin{enumerate}[(i)]
  \item\label{l:ref_cause:1} if $\psi_0\in\calR$ there exists $k\geq 0$
and a sequence $(\psi_0,\dots,\psi_k)$ with 
$\psi_k\in\calM$, 
$\psi_j\in\calR\setminus\calM$ 
and
$\psi_j\in\ch{\psi_{j+1}}$, for $j\in[0:k-1]$
\item\label{l:ref_cause:2}
 if $\vphi_*\in \calH_*\setminus\calH$ then there is
$\psi\in\calR$ such that $\vphi_*\in \ch{\psi}$
 \item\label{l:ref_cause:3}
 if $\vphi_*\in \calH_*\setminus\calH$ then there is $k\geq 1$
and $\vphi\in \calM$ such that $ \vphi_*\in\ch[k]{\vphi}$.
 \end{enumerate}
\end{lemma}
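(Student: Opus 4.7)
The plan is to exploit three simple facts: $\calR \subset \calL_*$, so every element of $\calR$ either lies in $\calB^0$ or has a parent in $\calL_*$; a level-zero element of $\calR$ is automatically in $\calM$ (since $\calB^0 \subset \calC$ and the only way to be in $\calC \setminus \calH$ is to be in $\calL$, but $\calR \cap \calL = \emptyset$); and the set identity $\calH_* \setminus \calH = \ch{\calR} \setminus (\calC \cup \calR)$ already proved in Lemma~\ref{l:ref_identities}\ref{l:ref_identities:HrminusH}.

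For \ref{l:ref_cause:1}, I would build the chain by induction on decreasing level. Let $\psi_0 \in \calR$. If $\psi_0 \in \calM$, set $k=0$ and stop. Otherwise $\psi_0 \in \calR \setminus \calM$; since $\calM = \calR \cap \calH$, then $\psi_0 \notin \calH = \calC \setminus \calL$, and since $\psi_0 \in \calR = \calL_* \setminus \calL$ implies $\psi_0 \notin \calL$, we get $\psi_0 \notin \calC = \calB^0 \cup \ch{\calL}$. In particular $\psi_0 \notin \calB^0$, so the lineage property applied to $\calL_*$ yields some $\psi_1 \in \calL_*$ with $\psi_0 \in \ch{\psi_1}$; and $\psi_0 \notin \ch{\calL}$ forces $\psi_1 \notin \calL$, hence $\psi_1 \in \calR$. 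Iterating, each step strictly decreases the level, so the process must terminate; moreover it cannot reach level zero while staying outside $\calM$, since a level-zero element of $\calR \setminus \calM$ would lie in $\calB^0 \subset \calC$ yet outside $\calH$, forcing it into $\calL$ and contradicting $\calR \cap \calL = \emptyset$. Therefore the chain ends at some $\psi_k \in \calM$.

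For \ref{l:ref_cause:2}, I would simply invoke Lemma~\ref{l:ref_identities}\ref{l:ref_identities:HrminusH}: $\vphi_* \in \calH_* \setminus \calH = \ch{\calR} \setminus (\calC \cup \calR)$ immediately gives $\vphi_* \in \ch{\psi}$ for some $\psi \in \calR$. For \ref{l:ref_cause:3}, I would compose \ref{l:ref_cause:2} with \ref{l:ref_cause:1}. Starting from $\psi_0 \in \calR$ with $\vphi_* \in \ch{\psi_0}$, the chain $(\psi_0, \ldots, \psi_k)$ from \ref{l:ref_cause:1} gives by an immediate induction $\ch[j+1]{\psi_j} \subset \ch[j+2]{\psi_{j+1}}$, so $\vphi_* \in \ch{\psi_0} \subset \ch[k+1]{\psi_k}$. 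Taking $\vphi := \psi_k \in \calM$ yields the statement with exponent $k+1 \ge 1$.

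The main obstacle is ensuring, in part \ref{l:ref_cause:1}, that the ascending chain stays inside $\calR \setminus \calM$ until the last step and actually lands in $\calM$. The delicate point is combining the lineage property of $\calL_*$ with the characterization of being in $\calR \setminus \calM$ as being outside $\calC$: together they force each predecessor to again be in $\calR$, and they rule out the pathological case where the chain would escape through a level-zero element that is neither in $\calM$ nor refinable further. Everything else is either a direct rewriting of definitions or a call to the already-established algebraic identities for refinements.
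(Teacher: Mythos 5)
Your proof is correct and follows essentially the same route as the paper's: part (i) is the same level-descending induction, parts (ii) and (iii) are the same appeal to Lemma~\ref{l:ref_identities}\ref{l:ref_identities:HrminusH} and composition of (i) with (ii). The only cosmetic difference is that where the paper invokes the packaged identity $\calR \subset (\ch{\calR}\setminus\calC)\dcup\calM$ of Lemma~\ref{l:ref_identities}\ref{l:ref_identities:R}, you re-derive its content directly from the definitions of lineage, refiner and generator, which is equivalent.
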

\begin{proof}
To show result~\ref{l:ref_cause:1} we proceed by induction on the level of 
$\psi_0$.
If $\l_{\psi_0}=0$ Lemma \ref{l:ref_identities}\ref{l:ref_identities:R} implies 
that
$\psi_0\in\calM$ so the result follows with $k=0$.
Now assume that statement \ref{l:ref_cause:1} holds for
any function of level $n$, and let $\l_{\psi_0}=n+1$.
Again using Lemma \ref{l:ref_identities}\ref{l:ref_identities:R} there are two 
possibilities.
Either $\psi_0\in\calM$ and the result follows with $k=0$ or
$\psi_0\in\ch{\calR}$.
In the latter case, there is $\psi_1\in\calR$ with
$\psi_0\in\ch{\psi_1}$, whence $\l_{\psi_1}=n$ and 
the inductive assumption yields the desired assertion.

Assertion~\ref{l:ref_cause:2} is a direct consequence of 
Lemma~\ref{l:ref_identities}\ref{l:ref_identities:HrminusH}, 
and~\ref{l:ref_cause:3} follows from~\ref{l:ref_cause:1} 
and~\ref{l:ref_cause:2}.
\end{proof}

\section{Linear Independence}\label{s:linear_independence}
The hierarchical generators together with the \Call{Refine}{} procedure
of Algorithm \ref{a:refine0} give a remarkably 
simple mechanism to obtain spaces with the required local resolution 
[cf.~Requirement in 
Properties~\ref{p:refine}:\ref{p:refine:res},\ref{p:refine:simple}].
But, as we can see in the next example
it may not give automatically the linear independence stated in
Property~\ref{p:generators}\ref{p:generators:li}.
\begin{example}[Generator not linearly independent]\label{ex:generator_not_li}
Consider $m=3$ with $d=1$, let 
$\calL=\{\vphi^{0}_{-2}, \vphi^{0}_{0}\}$ 
So $\calH=\{\vphi^{0}_{-1}, \vphi^{1}_{-2} , \vphi^{1}_{-1}, \vphi^{1}_{0}, 
\vphi^{1}_{1}\}$.
Clearly, $\vphi^{0}_{-1}$ can be spanned as linear combination of
$\{\vphi^{1}_{-2} , \vphi^{1}_{-1}, \vphi^{1}_{0}, 
\vphi^{1}_{1}\}$, thus $\calH$ is not linearly independent.
\end{example}

In this section we deal with transformations that can be
applied to a generator to ensure it is a basis.

\begin{definition}[Hierarchical basis]\label{d:hb} 
We say that $\calH$ is a \emph{hierarchical basis} if it is a linearly 
independent hierarchical generator.
\end{definition}


From Lemma~\ref{l:pu} 
we immediately obtain the following.
\begin{lemma}[Unique positive partition of unity] 
Let $\calH$ be a hierarchical basis, then for
each $\vphi \in \calH$, there exists a unique
constant $c_\vphi > 0$ such that
$\sum_{\vphi \in \calH} c_\vphi \vphi = 1$.
\end{lemma}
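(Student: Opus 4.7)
The plan is very short because the two ingredients are already in place: Lemma~\ref{l:pu} delivers existence of strictly positive coefficients, and Definition~\ref{d:hb} of a hierarchical basis gives linear independence, from which uniqueness is immediate.

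First I would invoke Lemma~\ref{l:pu} applied to $\calH$ (which, being a hierarchical basis, is in particular a hierarchical generator). This yields a family $\{c_\vphi\}_{\vphi\in\calH}$ with $c_\vphi>0$ and $\sum_{\vphi\in\calH} c_\vphi\vphi = 1$, covering existence and positivity.

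For uniqueness, I would suppose $\{c_\vphi'\}_{\vphi\in\calH}$ is another family with $\sum_{\vphi\in\calH} c_\vphi'\vphi = 1$ (not even requiring positivity of $c_\vphi'$). Subtracting the two representations yields
\[
\sum_{\vphi\in\calH}(c_\vphi - c_\vphi')\,\vphi \;=\; 0.
\]
Since $\calH$ is linearly independent (Definition~\ref{d:hb}), every coefficient must vanish, i.e., $c_\vphi = c_\vphi'$ for all $\vphi\in\calH$. This gives uniqueness.

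There is no real obstacle here: the substantive work was done in Lemma~\ref{l:pu}, whose inductive construction combined the partition of unity on $\calB^0$ with the positive parent–children expansion of Lemma~\ref{l:cp}~\ref{l:cp:ch_coef}. The only effect of upgrading from hierarchical generator to hierarchical basis is that linear independence promotes the existence statement to an existence-and-uniqueness statement.
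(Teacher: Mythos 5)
Your proposal is correct and matches the paper's intent exactly: the paper states that the lemma follows ``immediately'' from Lemma~\ref{l:pu}, and the implicit argument is precisely yours --- existence and positivity from Lemma~\ref{l:pu}, uniqueness by subtracting two representations and invoking the linear independence guaranteed by Definition~\ref{d:hb}. Nothing is missing.
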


One interesting property of a hierarchical basis which does not hold for 
arbitrary hierarchical generators is that every function
in \calH that is refined has a descendant in $\calH_*\setminus\calH$.

\begin{lemma}[Refined function effect]\label{l:ref_effect}
Let $\calH$ be a hierarchical basis, let $\calH_*\refi\calH$ with refiner set $\calR = \calL_* \setminus \calL$. 
\pedro{If $\vphi\in\calM = \calR \cap \calH$, then $\descs\vphi \cap (\calH_*\setminus\calH)\neq \emptyset$, i.e.,} 
there is $k>0$ such that there exists $\vphi_*\in (\calH_*\setminus\calH)\cap\ch[k](\vphi)$.
\end{lemma}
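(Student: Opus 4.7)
The plan is to argue by contradiction, exploiting the linear independence of $\calH$ and the representation of lineage functions as combinations of descendants in $\calH_*$ provided by Lemma~\ref{l:H_L_props}~\ref{l:H_L_props:L_from_H}.

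First, observe that since $\vphi \in \calM \subset \calR = \calL_*\setminus\calL$, we have $\vphi \in \calL_*$, hence $\vphi \notin \calH_* = \calC_* \setminus \calL_*$. Applying Lemma~\ref{l:H_L_props}~\ref{l:H_L_props:L_from_H} to the refined lineage $\calL_*$ and the function $\vphi \in \calL_*$, we obtain
\[
\vphi \in \spn(\calH_* \cap \descs\vphi),
\]
so there exist functions $\psi_1,\dots,\psi_r \in \calH_* \cap \descs\vphi$ and coefficients $c_1,\dots,c_r$ with $\vphi = \sum_{j=1}^r c_j \psi_j$.

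Now suppose, towards a contradiction, that $\descs\vphi \cap (\calH_* \setminus \calH) = \emptyset$. Then each $\psi_j$, being in $\calH_* \cap \descs\vphi$, must in fact lie in $\calH \cap \descs\vphi$. Since $\descs\vphi = \cup_{k\in\Z^+}\ch[k]{\vphi}$ consists of functions of level strictly greater than $\ell_\vphi$, none of the $\psi_j$ equals $\vphi$. On the other hand, $\vphi$ itself belongs to $\calH$ (because $\vphi \in \calM \subset \calH$). Therefore the identity
\[
\vphi - \sum_{j=1}^r c_j \psi_j = 0
\]
is a nontrivial linear combination of distinct elements of $\calH$, contradicting the fact that $\calH$ is a hierarchical basis, i.e., linearly independent.

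Hence the assumption fails, and there must exist some $\vphi_* \in \descs\vphi \cap (\calH_* \setminus \calH)$. By definition of $\descs\vphi$, this $\vphi_*$ lies in $\ch[k]{\vphi}$ for some $k \in \Z^+$, which is the desired conclusion. The main (only) subtlety is making sure Lemma~\ref{l:H_L_props}~\ref{l:H_L_props:L_from_H} is legitimately available for $\calL_*$; but this lemma is stated for arbitrary hierarchical generators and their lineages, so it applies to $\calL_*$ directly.
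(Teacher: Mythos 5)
Your proof is correct and follows essentially the same route as the paper's: argue by contradiction, apply Lemma~\ref{l:H_L_props}\ref{l:H_L_props:L_from_H} to $\calL_*$ to get $\vphi\in\spn(\descs\vphi\cap\calH_*)\subset\spn(\calH\setminus\{\vphi\})$, and contradict the linear independence of $\calH$. Your explicit checks that $\vphi\notin\descs\vphi$ and that the lemma applies to the refined lineage are welcome but the argument is the same one.
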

\begin{proof}
We prove the result by contradiction.
Let $\vphi\in\calM$ and suppose that 
\pedro{$\descs\vphi \cap (\calH_*\setminus\calH)= \emptyset$, then $\descs\vphi 
\cap \calH_* \subset \descs\vphi \cap \calH$. Besides, from 
Lemma~\ref{l:H_L_props}\ref{l:H_L_props:L_from_H}, 
	as $\vphi\in\calL_*$,
	we have that
	$\vphi\in\spn(\descs\vphi \cap \calH_*)
	\subset \spn(\descs\vphi \cap \calH) \subset \spn(\calH \setminus \{\vphi\})$.
}
This implies that \calH is linearly dependent which contradicts the assumption.
\end{proof}

\subsection{Linearly independent refinement}\label{sec:liref}
\pedro{We want to work with hierarchical spaces, in particular with those appearing in an adaptive process where some functions are selected and refined to add local resolution.
It turns out that the refinement procedures defined thus far produce hierarchical generators that may not be linearly independent.
Linear independence is desirable in order to fulfill Property~\ref{p:generators}\ref{p:generators:li}, to avoid redundancy and ill-posedness of the resulting (non-)linear systems.}
Removing redundant functions may be demanding task and may lead to generators that are not hierarchical.
One interesting approach is to consider linearly independent
refinements of hierarchical generators while investigating the following 
questions.
\begin{enumerate}
 \item Given a hierarchical generator, which is the smallest
 linearly independent refinement? Does it exist?
 \item If it exists, can we characterize it in terms of
 a property of the lineage?
 \item \pedro{Does it span the same space or a larger one? How much larger?}
 \item Can we provide a simple constructive procedure to find it?
\end{enumerate}

The first question can be mathematically written as follows:
Given a generator \calH, find the smallest element of the family
\begin{equation}\label{e:lin_ind_H}
\textgoth{B}(\calH) = \{\calH_* : \calH_*\refi\calH \text{ and } \calH_* 
\text{ is linearly independent}\}.
\end{equation}
It is still an open question whether in general the 
minimal  set of $\textgoth{B}(\calH)$ 
is a singleton, empty or larger. 
This matter, which is intimately related with the characterization 
of linear independence in terms of the lineage properties, is part of an 
ongoing 
work. 

An important point is that the condition imposed in \eqref{e:lin_ind_H}
can be replaced by one stronger than just linear independence.
The mathematical framework in which to develop 
a successful theory in the light of Properties~\ref{p:refine} and 
\ref{p:generators} can be summarized as follows.
\begin{enumerate}[(i)]
\item\label{en:1} State a condition $\textgoth{A}$ for the hierarchical 
generators
that implies linear independence
\item Consider the family
$\textgoth{A}(\calH) = \{\calH_* : \calH_*\refi\calH \text{ and }  
\calH_* 
\text{ satisfies condition $\textgoth{A}$}\}$.
\item \label{en:3} Show that the smallest element of $\textgoth{A}(\calH)$ 
exists.
\item \label{en:4}Show that the cardinality of this element is not much larger 
than 
$\dim1(\spn\calH)$.
\item Provide a simple method to construct this element.
\end{enumerate}
We remark that \ref{en:1} only asks for a sufficient condition for linear 
independence, thus the smallest refinement of \ref{en:3} may yield a basis 
bigger than the dimension of \calH.
Thus \ref{en:4} is an important restriction on the condition $\textgoth{A}$.

\subsection{A sufficient condition. Absorbing Generator}
A sufficient condition for  linear independence of a generator can be
obtained following the intuition that 
if a function in \calH is totally overlapped by finer
functions in \calH, then that function is very likely  
redundant. This idea with a different language can be ascribed to the
work of \cite{VGJS2011}. 
We now explore this concept in the framework described in 
Section~\ref{sec:liref}, 
presenting a sufficient condition for a hierarchical generator to be linearly 
independent.

\begin{definition}[Absorbing Generator]\label{d:lineage2}
	A hierarchical generator \calH is called \emph{absorbing} if
	for any $\vphi\in\calC$ such that $\bbI(\vphi)\subset\bbI(\calL)$ 
	it holds that $\vphi\in\calL$. 
	\pedro{In other words, $\calH$ is absorbing if there is no $\vphi \in \calH$ such that $\bbI(\vphi)\subset\bbI(\calL)$.}
\end{definition}

For an absorbing generator we have that each B--spline in $\calH$ 
overlaps an active cell of its own level. We state this more precisely as follows.
\begin{lemma}[Overlap of active cells]\label{l:overlapactive}
If $\calH$ is absorbing then $\calH \subset \bbB(\calA)$, where $\calA$ denotes the set of active cells corresponding to $\calH$, according to Definition~\ref{d:activecells}.
\end{lemma}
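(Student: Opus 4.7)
The plan is to unfold the definitions and reduce the statement to a direct application of the absorbing condition. Recall that $\bbB(\calA) = \bbB^0(\calA) = \cup_{I\in\calA}\bbB(I)$, where by Definitions~\ref{d:cellsupp} and~\ref{d:splinesoverlap} we have $\vphi \in \bbB(I)$ iff $I \in \bbI(\vphi)$. Therefore $\vphi \in \bbB(\calA)$ is equivalent to $\bbI(\vphi)\cap\calA \neq \emptyset$. So the task is reduced to showing that for every $\vphi\in\calH$, some cell in $\bbI(\vphi)$ is active.

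First I would fix an arbitrary $\vphi\in\calH$. By Definition~\ref{d:generator}, $\vphi\in\calC\setminus\calL$, in particular $\vphi\in\calC$ and $\vphi\notin\calL$. Applying the contrapositive of Definition~\ref{d:lineage2} to the absorbing hypothesis, from $\vphi\in\calC$ and $\vphi\notin\calL$ we conclude $\bbI(\vphi)\not\subset\bbI(\calL)$. Hence there exists a cell $I\in\bbI(\vphi)\setminus\bbI(\calL)$.

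Finally, since $\vphi\in\calC$, Definition~\ref{d:celloverlap} (with $k=0$) gives $\bbI(\vphi)\subset\bbI(\calC)$, so $I\in\bbI(\calC)\setminus\bbI(\calL)=\calA$ by Definition~\ref{d:activecells}. Thus $I\in\bbI(\vphi)\cap\calA$, which yields $\vphi\in\bbB(I)\subset\bbB(\calA)$, as desired.

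There is no real obstacle here; the argument is essentially a rewording of the contrapositive of the absorbing property. The only subtlety is keeping the operators straight, in particular the symmetry between $\bbB$ and $\bbI$ (via Lemma~\ref{l:p1} at level $k=0$) used to pass from ``$I\in\bbI(\vphi)$'' to ``$\vphi\in\bbB(I)$.''
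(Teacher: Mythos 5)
Your proof is correct and follows essentially the same route as the paper: both reduce the claim to $\bbI(\vphi)\cap\calA\neq\emptyset$ and then invoke the absorbing condition together with $\bbI(\vphi)\subset\bbI(\calC)$; the paper merely phrases the final step as a contradiction (assuming $\bbI(\vphi)\cap\calA=\emptyset$ forces $\bbI(\vphi)\subset\bbI(\calL)$, hence $\vphi\in\calL$), whereas you use the contrapositive directly. The two arguments are logically the same.
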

\begin{proof} 
Let $\calH$ be an absorbing generator, and let $\vphi \in \calH$. We 
want to prove that $\vphi \in \bbB(\calA)$, which is equivalent to 
$\bbI(\vphi)\cap \calA \neq \emptyset$. Assume, on the contrary, that 
$\bbI(\vphi)\cap \calA = \emptyset$.
	Since by definition $\calA = \bbI(\calC)\setminus \bbI(\calL)$, this implies 
that $\bbI(\vphi) \subset \bbI(\calL)$, which due to the fact that $\calH$ is 
absorbing, implies that $\vphi \in \calL$, which contradicts the assumption 
that 
$\vphi \in \calH$. The assertion thus follows.
\end{proof}

\begin{lemma}[Linear independence]\label{l:li}
Every absorbing hierarchical generator is linearly independent, 
and thus a hierarchical basis.
\end{lemma}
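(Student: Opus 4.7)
The plan is to argue by contradiction, exploiting the local linear independence of ordinary B--splines of a fixed level together with the interaction between $\calH$ and the active cells.

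Suppose $\calH$ is absorbing yet linearly dependent, so that there exist coefficients $\{a_\vphi\}_{\vphi\in\calH}$, not all zero, with $\sum_{\vphi\in\calH} a_\vphi \vphi = 0$ on $\Omega$. Let $\l := \min\{\l_\vphi : \vphi\in\calH,\ a_\vphi\neq 0\}$ be the coarsest active level, and pick some $\vphi_0\in\calH^\l$ with $a_{\vphi_0}\neq 0$. The plan is to find a single cell on which the identity collapses to a level-$\l$ equation and then invoke local linear independence of $\calB^\l$ to reach a contradiction.

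First, I would use Lemma~\ref{l:overlapactive}, which gives $\calH\subset\bbB(\calA)$, to locate an active cell $I\in\calA$ with $I\in\bbI(\vphi_0)$. Since $\vphi_0\in\calB^\l$, its cell support lies in $\calI^\l$, hence $I\in\calA^\l$. On the interior $\mathring I$, I would then discard contributions from the sum as follows:
\begin{itemize}
\item Functions $\vphi$ with $a_\vphi\ne 0$ and $\l_\vphi>\l$ vanish on $I$ thanks to Lemma~\ref{l:H_L_props:H_overlap_by_L1} applied with levels $\l$ and $k=\l_\vphi-\l>0$.
\item Functions $\vphi\in\calH^\l$ with $I\notin\bbI(\vphi)$ vanish on $\mathring I$ by Definition~\ref{d:cellsupp} (and the observation immediately following it).
\end{itemize}
The only surviving terms are those with $\vphi\in\calH^\l$ and $I\in\bbI(\vphi)$, so that on $\mathring I$
\begin{equation*}
\sum_{\vphi\in\calH^\l,\ I\in\bbI(\vphi)} a_\vphi\,\vphi \;=\; 0.
\end{equation*}

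Since the tensor-product B--splines of a fixed level are locally linearly independent on any open set (recalled in Section~\ref{s:oned_spline} and~\ref{s:dd_spline}), and since each $\vphi$ in the surviving sum is nonzero on $\mathring I$, every remaining coefficient must vanish. In particular $a_{\vphi_0}=0$, contradicting the choice of $\vphi_0$. Hence no nontrivial dependence exists and $\calH$ is a basis.

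The main (and in fact only nontrivial) obstacle is to be sure the reduction to a single active cell kills all higher-level contributions; this is exactly what the absorbing hypothesis buys us through Lemmas~\ref{l:overlapactive} and~\ref{l:H_L_props:H_overlap_by_L1}. Once that reduction is in place, local linear independence of cardinal B--splines finishes the argument immediately.
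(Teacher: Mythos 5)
Your proof is correct and follows essentially the same route as the paper's: both rest on Lemma~\ref{l:overlapactive} (every function in an absorbing $\calH$ meets an active cell of its own level), Lemma~\ref{l:H_L_props:H_overlap_by_L1} (finer functions vanish on coarser active cells), and the local linear independence of single-level B--splines. The only cosmetic difference is that you run a minimal-counterexample/contradiction argument starting from the coarsest level carrying a nonzero coefficient, whereas the paper phrases the same reduction as an induction upward from level $0$.
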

\begin{proof}
	Let $\calH$ be a hierarchical generator and assume that
	$\sum_{\vphi\in\calH} \alpha_\vphi \vphi = 0$. Then this function vanishes in 
every active cell, i.e., 
	\[
	\sum_{\vphi\in\calH} \alpha_\vphi \vphi = 0 \quad\text{in $I$},
	\qquad \text{for every } I \in \calA = \bigcup_{\ell=0}^{\depth(\calH)} 
\calA^\l.
	\]
Then, we have
$0=\sum_{\vphi\in\calH^0} \alpha_\vphi \vphi$ in each $I\in \calA^0$. 
Due to Lemma~\ref{l:H_L_props:H_overlap_by_L1} all functions in 
$\cup_{\l=1}^{\depth(\calH)} \calH^\l$ vanish in all $I \in \calA^0$, and since 
$\calB^0$ are locally linear independent, it follows that $\alpha_\vphi=0$ for 
all $\vphi\in\calH^0\cap\bbB(\calA^0)$. From Lemma~\ref{l:overlapactive}, 
$\calH^0 \cap \bbB(\calA^0) = \calH^0$ and 
$\alpha_\vphi=0$ for each $\vphi\in\calH^0$.
Arguing by induction, we conclude that $\alpha_\vphi=0$ for all 
$\vphi\in\calH$, and the assertion follows.
\end{proof}

\begin{definition}[Absorbing basis]\label{d:hbasis}
An absorbing hierarchical generator is called an 
absorbing hierarchical basis, or merely an \emph{absorbing basis}.
\end{definition}
\begin{remark}[Absorbing is not necessary for linear independence]\label{R:absorbing-not-necessary}
It is worth noticing that the absorbing condition is a sufficient condition for linear independence, but not necessary. In fact
Consider $m=2$ with $d=1$, let 
$\calL=\{\vphi^{0}_{-1}, \vphi^{0}_{0}, \vphi^{1}_{-1}, \vphi^{1}_{1},\}$ 
So $\calH=\{\vphi^{1}_{0}, 
            \vphi^{2}_{-1} , \vphi^{2}_{0}, 
            \vphi^{2}_{2} , \vphi^{2}_{3}\}$.
This $\calH$ is linearly independent, but not absorbing.
\end{remark}

\subsection{The absorbing refinement}

Given a hierarchical generator $\calH$, let us consider the family
\begin{equation}\label{e:abs_ref}
\textgoth{A}(\calH) = \{\calH_* : \calH_*\refi\calH, \calH_* \text{ is 
absorbing}\}.
\end{equation}
Algorithm \ref{a:make_absorbing} constructs the least element of this
family.
\begin{algorithm}[h!]
\caption{Absorbing Refinement Algorithm}\label{a:make_absorbing}
\begin{algorithmic}[1]
\Function{AbsRefine}{$\calH$}
\State $\tilde{\calC}^{0} = \calB^0$; $\tilde{\calL}=\emptyset$
\Comment{At this point $\tilde{\calH} = \calB^0$}
\For{$\l=0$ to $\depth(\calH)-1$}
\State $\calM = \{\vphi\in \tilde{\calC}^{\l}\setminus \calL^{\l} : 
\bbI(\vphi)\subset \bbI(\calL^{\l})
\}$
\State \Call{Refine}{$\tilde{\calH},\calM\cup\calL^{\l}$}
\Comment{Now $\tilde{\calL}^{\l} ={\calL}^{\l}\cup\calM$  and 
$\tilde{\calC}^{\l+1} = 
\ch{\tilde{\calL}^{\l}}$}
\EndFor
\State\Return{$\tilde{\calH}$}
\EndFunction
\end{algorithmic}
\end{algorithm}

The properties of this algorithm are summarized in the following Lemma.

\begin{lemma}[Properties of \Call{AbsRefine}{}]\label{l:abs_ref}
Let $\calH$ be a hierarchical generator, a call to  
$\Call{AbsRefine}{\calH}$ returns in $\tilde\calH$
the least element of the family $\textgoth{A}(\calH)$ 
from~\eqref{e:abs_ref}, 
i.e., the smallest absorbing basis which is a refinement of $\calH$.
Furthermore, we have that 
\begin{enumerate}[(i)]
\item \label{l:abs_ref:absd} 
If $\psi\in\calR:=\tilde\calL \setminus \calL$, then 
$\bbI(\psi)\subset\bbI(\calL^{\l_{\psi}})$.
\item \label{l:abs_ref:depth}
 $\depth{\calH} = \depth{\tilde\calH}$.
\item\label{e:gap1} If $\psi\in\calR$ there is $k>0$ 
such that 
$\ovch[]{\psi}{k}{\calH}\not=\emptyset$.
\end{enumerate}
\end{lemma}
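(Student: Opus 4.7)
The plan is to proceed by induction on the level index $\l$ of the for loop, maintaining the invariant that after iteration $\l$ one has $\tilde\calL^j = \calL^j \cup \calM_j$ for each $j \le \l$, where $\calM_j$ denotes the set $\calM$ computed at iteration $j$. A key consequence is $\bbI(\tilde\calL^j) = \bbI(\calL^j)$, since by definition $\bbI(\calM_j) \subset \bbI(\calL^j)$. Before anything, I would verify that the call to \Call{Refine}{} is well-posed at each step: because $\calL$ is a lineage, $\calL^\l \subset \ch[]{\calL^{\l-1}} \subset \ch[]{\tilde\calL^{\l-1}} = \tilde\calC^\l$ by induction, $\calM \subset \tilde\calC^\l$ by construction, and since $\tilde\calL^\l = \emptyset$ at that moment both sets lie in the current $\tilde\calH$, matching the hypothesis of Lemma~\ref{l:refine0}.

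To show $\tilde\calH \in \textgoth{A}(\calH)$, the refinement relation $\tilde\calH \refi \calH$ is immediate from $\calL \subset \tilde\calL$. For the absorbing property, suppose $\vphi \in \tilde\calC$ satisfies $\bbI(\vphi) \subset \bbI(\tilde\calL)$. Since $\bbI(\vphi)$ consists of cells of level $\l_\vphi$, and the cells of level $\l_\vphi$ in $\bbI(\tilde\calL)$ are exactly those of $\bbI(\tilde\calL^{\l_\vphi})$, the hypothesis simplifies to $\bbI(\vphi) \subset \bbI(\tilde\calL^{\l_\vphi}) = \bbI(\calL^{\l_\vphi})$. Then either $\vphi \in \calL^{\l_\vphi} \subset \tilde\calL$, or $\vphi \in \tilde\calC^{\l_\vphi} \setminus \calL^{\l_\vphi}$, in which case the definition of $\calM_{\l_\vphi}$ places $\vphi$ there and again $\vphi \in \tilde\calL$.

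For minimality, given any $\calH_* \in \textgoth{A}(\calH)$ I would prove $\tilde\calL^\l \subset \calL_*^\l$ by induction on $\l$. The inclusion $\calL^\l \subset \calL_*^\l$ holds by definition of refinement. For $\vphi \in \calM_\l$, the inductive hypothesis combined with $\vphi \in \tilde\calC^\l = \ch[]{\tilde\calL^{\l-1}}$ gives $\vphi \in \ch[]{\calL_*^{\l-1}} \subset \calC_*$; together with $\bbI(\vphi) \subset \bbI(\calL^\l) \subset \bbI(\calL_*)$, the absorbing property of $\calH_*$ forces $\vphi \in \calL_*$, completing the induction. The three additional properties then follow with little extra effort: property~\ref{l:abs_ref:absd} is just the defining condition of $\calM_{\l_\psi}$, since $\psi \in \calR$ means $\psi \in \calM_{\l_\psi}$; property~\ref{l:abs_ref:depth} follows from the fact that the for loop only modifies $\tilde\calL^j$ for $j \le \depth(\calH)-1$ and that $\tilde\calL^{\depth(\calH)-1} \supset \calL^{\depth(\calH)-1} \neq \emptyset$; and for~\ref{e:gap1}, property~\ref{l:abs_ref:absd} furnishes some $\phi \in \calL^{\l_\psi}$ and a cell $I \in \bbI(\phi) \cap \bbI(\psi)$, and then Corollary~\ref{c:H_L_props}\ref{c:H_L_props:L_overlap_by_dsc} supplies $k>0$ and $\eta \in \ch[k]{\phi} \cap \calH$ with $I \in \bbI^{-k}(\eta)$, so $\eta \in \bbB^k(I) \cap \calH \subset \bbB^k(\bbI(\psi)) \cap \calH = \ovch[]{\psi}{k}{\calH}$.

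The main delicacy lies in the absorbing and minimality arguments, and it really hinges on the single observation that the containment $\bbI(\vphi) \subset \bbI(\calL)$ automatically restricts to the level of $\vphi$, since cells only match cells of the same level. Once this is isolated, the construction's level-by-level character makes both the upward (absorbing) and downward (minimality) arguments essentially parallel inductions driven by the same absorbing property applied to the appropriate lineage.
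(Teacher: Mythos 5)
Your proof is correct and follows essentially the same route as the paper's: the level-by-level characterization $\tilde\calL^\l=\calL^\l\cup\calM_\l$ with $\bbI(\tilde\calL^\l)=\bbI(\calL^\l)$, the minimality via induction on levels using the absorbing property of the competitor $\calH_*$, and the derivation of~(iii) from~(i) together with Corollary~\ref{c:H_L_props}\ref{c:H_L_props:L_overlap_by_dsc}. The only differences are expository: you spell out the precondition check for \Call{Refine}{} and the observation that $\bbI(\vphi)\subset\bbI(\tilde\calL)$ automatically localizes to level $\l_\vphi$, both of which the paper leaves implicit.
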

\begin{proof}
At the start of the loop $\tilde\calH=\calB^0$ is a hierarchical generator,
inside the loop $\depth(\calH)$ valid iterated calls to \Call{Refine}{} are 
made.
Thus Lemma~\ref{l:refine0} implies that $\tilde\calH$ is a
refinement of \calH and
\begin{equation}\label{e:defi}
\tilde\calL^{\l} = \{\vphi\in \tilde{\calC}^{\l} : 
\bbI(\vphi)\subset \bbI(\calL^{\l})
\},
\end{equation}
so that $\bbI(\tilde\calL^\l) = \bbI(\calL^\l)$. 
Therefore, if $\vphi \in \tilde\calL^\l$ and $\bbI(\vphi) \subset 
\bbI(\tilde\calL^\l)$ then $\vphi \in \tilde\calL^\ell$ and $\tilde{\calH}$ is 
thus absorbing.
Hence $\tilde\calH$ is an absorbing refinement of \calH.

Now we show that it is in fact the smallest of such refinements.
To see this take $\calH_*$ another absorbing refinement of \calH.
If $\vphi\in\tilde{\calL}^0$ then 
$\vphi\in\tilde{\calC}^0={\calB}^0=\calC_*^0$
and from \eqref{e:defi} $\vphi\ab\calL^0$.
Then as $\calH_*$ is an absorbing refinement it follows that
$\vphi\in\calL_*^0$, thus $\tilde{\calL}^0\subset\calL_*^0$.
We now proceed by induction.
Suppose we have shown that $\tilde{\calL}^n\subset\calL_*^n$, so
$\tilde{\calC}^{n+1}\subset\calC_*^{n+1}$ and take
$\vphi\in\tilde{\calL}^{n+1}$ then 
$\vphi\in\calC_*^{n+1}$ but also $\bbI(\vphi) \subset \bbI(\calL^{n+1})\subset 
\bbI(\calL_*^{n+1})$.
Then, as $\calH_*$ is absorbing it follows that
$\vphi\in\calL_*^{n+1}$, whence $\tilde{\calL}^{n+1}\subset\calL_*^{n+1}$.
Summarizing, we have shown that $\calH_*\refi\tilde\calH$, so $\tilde\calH$ is 
in fact 
the smallest set in $\textgoth{A}(\calH)$ from~\eqref{e:abs_ref}.
Assertions~\ref{l:abs_ref:absd} and~\ref{l:abs_ref:depth} immediately follow.
\seba{%
From part \ref{l:abs_ref:absd}, $\bbI(\psi)\subset\bbI(\calL^{\l_{\psi}})$,
thus for any $I\in\bbI(\psi)$ there is $\eta\in\calL$ such that
$I\in\bbI(\eta)$.
Using Lemma~\ref{c:H_L_props}\ref{c:H_L_props:L_overlap_by_dsc},
$I\in\cup_{k>0}\bbI^{-k}(\ch[k]{\eta}\cap\calH)$,
thus there must exist $k>0$ and $\phi\in\ch[k]{\eta}\cap\calH$
such that $I\in\bbI^{-k}(\phi)$, thus 
$\vphi\in\ovch[]{\psi}{k}{\calH}$ and part \ref{e:gap1} follows.}
\end{proof}

\pedro{
\begin{remark}[Comparison with other hierarchical basis]\label{r:coincidencia}
	Our concept of absorbing hierarchical basis coincides with the concept of hierarchical basis from~\cite{BG2016}. In fact, given an absorbing hierarchical basis $\calH$ with corresponding lineage $\calL$, after defining $\omega_\ell = \cup_{\vphi \in \calL^\ell} \supp \vphi$, it is straightforward to check that the definition from~\cite{BG2016} leads to the same space.
\end{remark}
}

\pedro{The next result will be important when studying the \emph{gap} of a hierarchical generator which is the subject of the following section.}

\begin{lemma}[New function]\label{l:new_max}
Let $\calH$ be a hierarchical generator, and $\tilde\calH$ the result of a call to  
$\Call{AbsRefine}{\calH}$.
If $\vphi\in\tilde\calH\setminus\calH$ and $k>0$,
then for each $\eta\in\ovch{\vphi}{-k}{\calB}$  
there exists $\zeta\in\calH$ and $k'\geq k$ such that
$\eta\in\ovch{\zeta}{-k'}{\calB}$.
\end{lemma}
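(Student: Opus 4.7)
The plan is to trace $\vphi$ back through the refinement produced by \Call{AbsRefine}{}: $\vphi$ must be a child of some $\psi$ in the refiner set $\calR$, and the defining property of \Call{AbsRefine}{} ensures $\bbI(\psi) \subset \bbI(\calL^{\l_\psi})$. From there I move ``sideways'' to an $\eta_0 \in \calL$ whose support contains the relevant parent cell, and then descend back into $\calH$ via Corollary~\ref{c:H_L_props}\ref{c:H_L_props:L_overlap_by_dsc}, ending up at a function $\zeta \in \calH$ whose level is at least $\l_\vphi$, which is precisely what is needed to ensure $k' \ge k$.

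In detail, I would proceed as follows. Since $\vphi \in \tilde\calH \setminus \calH$, Lemma~\ref{l:ref_identities}\ref{l:ref_identities:HrminusH} yields $\psi \in \calR$ with $\vphi \in \ch[]{\psi}$, and hence $\l_\psi = \l_\vphi - 1$; Lemma~\ref{l:abs_ref}\ref{l:abs_ref:absd} then gives $\bbI(\psi) \subset \bbI(\calL^{\l_\psi})$. By~\eqref{ovch}, $\eta \in \ovch{\vphi}{-k}{\calB}$ means $\bbI(\vphi) \cap \bbI^k(\eta) \ne \emptyset$, so pick a cell $I$ in this intersection. Lemma~\ref{L:children}~(i) (applied with $k=1$) gives $\bbI(\vphi) \subset \bbI(\ch[]{\psi}) = \ch[]{\bbI(\psi)}$, so the unique parent $J := \ch[-1]{I}$ of $I$ lies in $\bbI(\psi)$, and therefore $J \in \bbI(\eta_0)$ for some $\eta_0 \in \calL^{\l_\psi}$.

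Now I apply Corollary~\ref{c:H_L_props}\ref{c:H_L_props:L_overlap_by_dsc} to $\eta_0$: since $J \in \bbI(\eta_0)$, there exist $j \ge 1$ and $\zeta \in \ch[j]{\eta_0} \cap \calH$ with $J \in \bbI^{-j}(\zeta)$, equivalently some $I_0 \in \bbI(\zeta)$ lies in $\ch[j]{J}$. Set $k' := k + j - 1$; since $j \ge 1$, we have $k' \ge k$, and $\l_\zeta = \l_{\eta_0} + j = \l_\psi + j = \l_\vphi + j - 1 = \l_\eta + k'$, so the level requirement for $\eta \in \ovch{\zeta}{-k'}{\calB}$ is met. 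To verify the overlap condition via~\eqref{ovch}, note that $I \in \bbI^k(\eta) = \ch[k]{\bbI(\eta)}$ means $I \in \ch[k]{I''}$ for some $I'' \in \bbI(\eta)$, whence the parent $J$ of $I$ satisfies $J \in \ch[k-1]{I''}$; consequently
\[
\ch[j]{J} \subset \ch[j]{\ch[k-1]{I''}} = \ch[k+j-1]{I''} = \ch[k']{I''} \subset \bbI^{k'}(\eta).
\]
Since $I_0 \in \bbI(\zeta) \cap \ch[j]{J}$, we conclude $\bbI(\zeta) \cap \bbI^{k'}(\eta) \ne \emptyset$, which is the desired conclusion.

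The only delicate point is the index bookkeeping: one loses one level going from $\vphi$ to its parent $\psi$, and the condition $j \ge 1$ coming from Corollary~\ref{c:H_L_props}\ref{c:H_L_props:L_overlap_by_dsc} is exactly what is needed to compensate for this loss and recover $k' \ge k$. There is no deeper obstacle, since the absorbing property of \Call{AbsRefine}{} supplies precisely the ancestor $\eta_0 \in \calL$ that bridges the support of $\vphi$ with the original generator $\calH$.
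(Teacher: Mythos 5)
Your proof is correct and follows essentially the same route as the paper's: identify the parent $\psi\in\calR$ of $\vphi$, invoke the absorbing property $\bbI(\psi)\subset\bbI(\calL^{\l_\psi})$ to find a function of $\calL$ covering the relevant cell, and then descend back into $\calH$ via the spanning-by-descendants property, with the loss of one level compensated by $j\ge 1$. The only cosmetic difference is that the paper shifts the overlap relation from $\vphi$ to $\psi$ abstractly via Lemma~\ref{l:over_ch}, whereas you carry out the equivalent cell-level bookkeeping by hand through the parent cell $J$.
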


\begin{proof}
	From Lemma~\ref{l:ref_identities}\ref{l:ref_identities:HrminusH}, if $\vphi 
\in \tilde{\calH}\setminus \calH$, there exists $\psi \in \calR$ such that 
$\vphi \in \ch[]{\psi}$, so that Lemma~\ref{l:over_ch} yields 
$\ovch[]{\vphi}{-k}{\calB} \subset \ovch[]{\psi}{-k+1}{\calB}$. 
	Therefore, if $\eta \in \ovch[]{\vphi}{-k}{\calB}$, we have $\eta \in 
\ovch[]{\psi}{-k+1}{\calB}$ and~\eqref{ovch} implies the existence of $I \in 
\bbI(\psi)\cap \bbI^{k-1}(\eta)$. Since $\psi \in \calR$, $\bbI(\psi) \subset 
\bbI(\calL^{\l_\psi})$ so that $I \in \bbI(\calL^{\l_\psi}) \cap 
\bbI^{k-1}(\eta)$.
	Consequently, there exists $\hat \psi \in \calL^{\l_\psi}$ with $I \in 
\bbI(\hat\psi)\cap \bbI^{k-1}(\eta)$.
	By Lemma~\ref{l:H_L_props}\ref{l:H_L_props:L_from_H}, $\hat{\psi} \in 
\spn(\descs(\hat{\psi})\cap \calH)$, whence there exists $\zeta \in 
\descs(\hat{\psi})\cap \calH$ and $j \ge 1$ such that $\bbI(\zeta) \cap 
\ch[j]{I} \neq\emptyset$, hence, $\bbI(\zeta) \cap 
\bbI^{k-1+j}(\eta)\neq\emptyset$, which implies that $\eta \in 
\ovch[]{\eta}{-k'}{\calB}$ with $k' = k-1+j \ge k$, due to~\eqref{ovch}.
\end{proof}

\section{Overlapping and gap constraint}\label{s:gap}

We now 
deal with Property~\ref{p:generators}\ref{p:generators:overlap}, i.e., we address the issue of controlling the level difference
of overlapping functions in a given generator.

To measure the function overlapping in a generator we assign
a number, called the \emph{gap}, to each function in the generator or basis.
This number, associated to each function in the generator, 
measures the level difference with the coarsest overlapping function.

\begin{definition}[Gap of a function]
Let \calH be a hierarchical generator and $\vphi\in\calH$,
then we define the \emph{gap} of $\vphi$ in \calH as 
$\fgap{\vphi} := \sup\{g\in\Z: \ovch{\vphi}{-g}{\calH} 
\not= \emptyset\}$.
\end{definition}

\pedro{Computing the gap of a function can be rather expensive, but as we will see below, it will never be necessary to perform such a computation; see Remark~\ref{R:never compute gap}.}

\begin{lemma}[Properties of the gap]\label{l:gap}
Let \calH be a hierarchical generator and $\vphi\in\calH$ then
the following properties are satisfied
\begin{enumerate}[(i)]
 \item $\fgap{\vphi}\in\Z^+_0$.
 \item \label{l:gap:s1}$\ovch{\vphi}{-\fgap{\vphi}}{\calH} \not=\emptyset$.
 \item \label{l:gap:s2}$\ovch{\vphi}{-g}{\calH} =\emptyset$ for 
$g>\fgap{\vphi}$.
 \item \label{l:gap:s3} If $\ovch{\vphi}{-g}{\calH} \not=\emptyset$ then 
$g\leq\fgap{\vphi}$.
\end{enumerate}
\end{lemma}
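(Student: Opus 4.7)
The plan is to analyze the set
\[
S := \{g \in \Z : \ovch{\vphi}{-g}{\calH} \neq \emptyset\}
\]
and show that it is a nonempty subset of $\Z$ bounded above by $\ell_\vphi$; from this, all four assertions follow by elementary properties of the supremum of a nonempty bounded set of integers.

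First I would establish $0 \in S$, which handles nonemptiness. Pick any $I \in \bbI(\vphi)$; then Definition~\ref{d:splinesoverlap} gives $\vphi \in \bbB^0(I) \subset \bbB^0(\bbI(\vphi))$, since $I \in \bbI^0(\vphi) = \bbI(\vphi)$. Because $\vphi \in \calH$, this yields $\vphi \in \bbB^0(\bbI(\vphi)) \cap \calH = \ovch{\vphi}{0}{\calH}$, so $0 \in S$.

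Next I would prove that $S$ is bounded above by $\ell_\vphi$. By definition, $\ovch{\vphi}{-g}{\calH} = \bbB^{-g}(\bbI(\vphi)) \cap \calH$, and by Definition~\ref{d:splinesoverlap}, $\bbB^{-g}(\bbI(\vphi)) \subset \calB^{\ell_\vphi - g}$. If $g > \ell_\vphi$ then $\ell_\vphi - g < 0$, but B--splines are only defined for nonnegative levels, so $\calB^{\ell_\vphi - g} = \emptyset$, whence $\ovch{\vphi}{-g}{\calH} = \emptyset$ and $g \notin S$. Combined with $0 \in S$, this shows $S$ is a nonempty set of integers bounded above, hence $\fgap{\vphi} = \sup S$ is attained and lies in $\{0, 1, \dots, \ell_\vphi\} \subset \Z^+_0$. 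This proves (i) and, because the supremum is attained, also~(ii).

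For~(iii) and~(iv), nothing remains beyond unwinding the definition of $\sup$. If $g > \fgap{\vphi}$ then $g \notin S$ by maximality, giving $\ovch{\vphi}{-g}{\calH} = \emptyset$, which is~(iii); conversely, if $\ovch{\vphi}{-g}{\calH} \neq \emptyset$ then $g \in S$ and $g \le \sup S = \fgap{\vphi}$, which is~(iv) (equivalently, the contrapositive of~(iii)). There is no real obstacle in this proof; the only point that warrants care is the verification that $\vphi$ overlaps itself at level $0$, which would be the one thing a careful reader might pause over, so I would spell out that step explicitly using Definition~\ref{d:splinesoverlap} and the fact that $\bbI^0 = \bbI$.
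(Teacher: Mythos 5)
Your proof is correct and follows the same route as the paper's: observe that $\vphi\in\ovch{\vphi}{0}{\calH}$ so the set defining the supremum is nonempty, that it is bounded above by $\ell_\vphi$ since there are no B--splines of negative level, and then read off all four assertions from the fact that a nonempty set of integers bounded above attains its supremum. You merely spell out the two verifications (self-overlap at level $0$ and the upper bound) in more detail than the paper does.
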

\begin{proof}
As $\vphi\in\ovch{\vphi}{0}{\calH}$ the set on which the supremum is taken 
is not empty and $\ovch{\vphi}{-g}{\calH}$ is empty for $g>\l_{\vphi}$ so
the set is bounded above by $\l_{\vphi}$.
Then every $\vphi$ has a non negative gap assigned.
The last two items follow directly from the definition of supremum.
\end{proof}

The next result states that the gap of a $k$-th descendant of a function $\vphi$ is bounded by the gap of $\vphi$ plus $k$.
\begin{lemma}[Gap of a descendant]\label{p:gap_ch}
Let \calH be a hierarchical generator,
$\vphi\in\calH$ and $k\geq0$,
then for $\psi\in\ch[k]{\vphi}\cap\calH$ we have that $k\leq\fgap{\psi}\leq 
\fgap{\vphi}+k$.
\end{lemma}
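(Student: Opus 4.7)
I would split the statement into the two inequalities $k \le \fgap{\psi}$ and $\fgap{\psi} \le \fgap{\vphi}+k$, and in both directions rely on Lemma~\ref{l:over_ch}, which says $\ovch[]{\ch[k]{\vphi}}{j}{\calH} = \ovch[]{\vphi}{j+k}{\calH}$ for every $j \in \Z$ and $k \in \Z_0^+$, together with the characterization~\eqref{ovch} of overlap between two B-splines in terms of their cell supports.

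For the lower bound, my plan is to exhibit $\vphi$ itself as a witness of the non-emptiness of $\ovch[]{\psi}{-k}{\calH}$. Since $\psi \in \ch[k]{\vphi}$, Lemma~\ref{L:children}(i) gives $\bbI(\psi) \subset \bbI(\ch[k]{\vphi}) = \bbI^k(\vphi)$, and picking any $I \in \bbI(\psi)$ yields $I \in \bbI(\psi) \cap \bbI^k(\vphi) \neq \emptyset$. By~\eqref{ovch} this means $\vphi \in \ovch[]{\psi}{-k}{\calB}$, and since $\vphi \in \calH$ we obtain $\vphi \in \ovch[]{\psi}{-k}{\calH}$. Lemma~\ref{l:gap}\ref{l:gap:s3} then gives $k \le \fgap{\psi}$.

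For the upper bound, write $g := \fgap{\psi}$. If $g \le k$ the inequality $g \le \fgap{\vphi}+k$ is immediate because $\fgap{\vphi}\ge 0$, so assume $g > k$. Since $\psi \in \ch[k]{\vphi}$ we have $\{\psi\} \subset \ch[k]{\vphi}$, so monotonicity of $\bbI$ together with the definition of $\ovch[]{\cdot}{-g}{\calH}$ yields $\ovch[]{\psi}{-g}{\calH} \subset \ovch[]{\ch[k]{\vphi}}{-g}{\calH}$. Applying Lemma~\ref{l:over_ch} with $j=-g$ gives
\[
\ovch[]{\psi}{-g}{\calH} \subset \ovch[]{\ch[k]{\vphi}}{-g}{\calH} = \ovch[]{\vphi}{-(g-k)}{\calH}.
\]
By Lemma~\ref{l:gap}\ref{l:gap:s1} the left-hand side is non-empty, so the right-hand side is non-empty as well, and another application of Lemma~\ref{l:gap}\ref{l:gap:s3} gives $g-k \le \fgap{\vphi}$, i.e., $\fgap{\psi} \le \fgap{\vphi}+k$.

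No step looks genuinely hard; the only thing to be careful about is the sign bookkeeping in Lemma~\ref{l:over_ch} (ensuring that $j+k = k-g$ is indeed the legal argument in the $\ovch[]{\cdot}{\cdot}{\cdot}$ notation, which it is, since that superscript is allowed to be any integer). The real content is that the lemma on composition of $\ovch{}{}{}$ turns the descendant relation $\psi \in \ch[k]{\vphi}$ into a clean shift of the gap parameter by $k$.
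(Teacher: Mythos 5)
Your proof is correct, and your upper-bound argument is the same as the paper's: the inclusion $\ovch[]{\psi}{-j}{\calH}\subset\ovch[]{\ch[k]{\vphi}}{-j}{\calH}=\ovch[]{\vphi}{-j+k}{\calH}$ supplied by Lemma~\ref{l:over_ch}, followed by taking suprema. You actually go further than the paper, whose proof only carries out this supremum step (with a typo, ending in $\fgap{\vphi}+1$ instead of $\fgap{\vphi}+k$) and never addresses the lower bound $k\le\fgap{\psi}$ at all; your witness argument --- $\vphi\in\ovch[]{\psi}{-k}{\calH}$ because $\bbI(\psi)\subset\bbI^k(\vphi)$, hence $k\le\fgap{\psi}$ by Lemma~\ref{l:gap}\ref{l:gap:s3} --- is exactly the missing piece, so your write-up is the more complete of the two.
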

\begin{proof}
From Lemma~\ref{l:over_ch}, for $\psi\in\ch[k]{\vphi}\cap\calH$ we get that
$\{j : \ovch{\psi}{-j}{\calH} \not=\emptyset \} 
\subset \{j : \ovch{\vphi}{-j+k}{\calH} \not=\emptyset \} = 
\{j+k : \ovch{\vphi}{-j}{\calH} \not=\emptyset \}$,
and taking supremum we get that $\fgap{\psi}\leq \fgap{\vphi}+1$.
\end{proof}

Observe that the gap of a $k$-th descendant of $\vphi$ can actually take
any value between $k$ and $\fgap{\vphi}+k$.

\paragraph{Refinement and gap}
A refinement process may change the gap of the functions in a generator.
The following result states
that if a function stays in the generator after refinement, 
its gap does not increase, and in fact it can actually 
decrease.

\begin{lemma}[Refinement and gap]\label{l:ref_and_gap}
 Let $\calH$ be a hierarchical generator and $\calH_* \refi \calH$, then:
 \begin{enumerate}[(i)]
  \item \label{l:ref_and_gap:new} 
  If $\vphi\in\calH_*$ there exists $k\geq\fgap[\calH_*]{\vphi}$,
  such that $\ovch{\vphi}{-k}{\calH}\not=\emptyset$.
\item \label{l:ref_and_gap:stay}
 If $\vphi\in\calH_*\cap\calH$,
 then $\fgap[\calH_*]{\vphi} \le \fgap[\calH]{\vphi}$.
 \end{enumerate}
\end{lemma}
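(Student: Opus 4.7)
The plan is to prove~\ref{l:ref_and_gap:new} first, since~\ref{l:ref_and_gap:stay} will follow immediately from it. For~\ref{l:ref_and_gap:new}, set $g := \fgap[\calH_*]{\vphi}$. By Lemma~\ref{l:gap}\ref{l:gap:s1}, there exists $\eta \in \ovch{\vphi}{-g}{\calH_*}$. If $\eta$ already lies in $\calH$, then $\eta \in \ovch{\vphi}{-g}{\calH}$ and we are done with $k = g$. The interesting case is $\eta \in \calH_* \setminus \calH$, where we must trace $\eta$ back to a witness in $\calH$; this is where Lemma~\ref{l:ref_cause}\ref{l:ref_cause:3} enters, providing $k' \ge 1$ and $\vphi' \in \calM \subset \calH$ with $\eta \in \ch[k']{\vphi'}$.

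The core of the argument is then a short cell-chasing computation. By~\eqref{ovch}, $\eta \in \ovch{\vphi}{-g}{\calH_*}$ gives a cell $K \in \bbI(\vphi) \cap \bbI^g(\eta)$, so $K \in \ch[g]{J}$ for some $J \in \bbI(\eta)$. Meanwhile, $\eta \in \ch[k']{\vphi'}$ together with Lemma~\ref{L:children}(i) gives $\bbI(\eta) \subset \bbI(\ch[k']{\vphi'}) = \ch[k']{\bbI(\vphi')}$, so $J \in \ch[k']{I}$ for some $I \in \bbI(\vphi')$. Chaining these, $K \in \ch[g+k']{I} \subset \bbI^{g+k'}(\vphi')$, hence $\bbI(\vphi) \cap \bbI^{g+k'}(\vphi') \neq \emptyset$. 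Applying~\eqref{ovch} again yields $\vphi' \in \ovch{\vphi}{-(g+k')}{\calB}$, and since $\vphi' \in \calH$, the set $\ovch{\vphi}{-(g+k')}{\calH}$ is non-empty. Taking $k = g + k' \ge g$ completes~\ref{l:ref_and_gap:new}.

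For~\ref{l:ref_and_gap:stay}, if $\vphi \in \calH_* \cap \calH$ then~\ref{l:ref_and_gap:new} supplies $k \ge \fgap[\calH_*]{\vphi}$ with $\ovch{\vphi}{-k}{\calH} \neq \emptyset$; but Lemma~\ref{l:gap}\ref{l:gap:s3} then forces $k \le \fgap[\calH]{\vphi}$, so $\fgap[\calH_*]{\vphi} \le \fgap[\calH]{\vphi}$.

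The only potential obstacle is the bookkeeping of children/ancestors through iterated application of $\ch[]{\cdot}$, but this is routine given Lemma~\ref{L:children} and the characterization~\eqref{ovch}. The conceptual key is noticing that an overlap detected in $\calH_*$ via a newly-created function $\eta$ is automatically inherited (with possibly larger gap) by the function in $\calH$ that caused $\eta$ to appear, which is exactly what Lemma~\ref{l:ref_cause}\ref{l:ref_cause:3} guarantees; the gap $g$ simply accumulates the extra generations $k'$ needed to reach the pre-refinement ancestor.
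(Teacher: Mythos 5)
Your proof is correct and follows essentially the same route as the paper's: take a witness $\eta$ of the gap in $\calH_*$, split on whether $\eta\in\calH$, and in the new-function case pull back to an ancestor in $\calM\subset\calH$ via Lemma~\ref{l:ref_cause}\ref{l:ref_cause:3}, accumulating the extra levels into $k$. Your explicit cell-chasing through $K$, $J$, $I$ just unpacks the paper's one-line containment $\bbI^{\fgap[\calH_*]{\vphi}}(\eta)\subset\bbI^{\fgap[\calH_*]{\vphi}+j}(\xi)$, and your deduction of~\ref{l:ref_and_gap:stay} from~\ref{l:ref_and_gap:new} via Lemma~\ref{l:gap}\ref{l:gap:s3} is exactly what the paper leaves implicit.
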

\begin{proof}
Let $\vphi\in\calH_*$, due to Lemma \ref{l:gap}\ref{l:gap:s1} there exists
$\eta\in\ovch[]{\vphi}{-\fgap[\calH_*]{\vphi}}{\calH_*}$, 
i.e., $\eta\in\calH_*$ 
and $\bbI^{\fgap[\calH_*]{\vphi}}(\eta) \cap \bbI(\vphi) \neq \emptyset$.
If $\eta\in\calH$ then $\eta \in \ovch[]{\vphi}{-\fgap[\calH_*]{\vphi}}{\calH}$ 
and $\ovch[]{\vphi}{-k}{\calH} \neq\emptyset$ for $k=\fgap[\calH_*]{\vphi}$.
If $\eta\in\calH_*\setminus\calH$ then from 
Lemma~\ref{l:ref_cause}\ref{l:ref_cause:3}, 
there exists an ancestor $\xi\in\calM\subset\calH$ and $j\ge 1$, 
such that $\eta \in \ch[j]{\xi}$ so that $\bbI(\eta) \subset \bbI^j(\xi)$. 
Therefore, $\bbI(\vphi) \cap \bbI^{\fgap[\calH_*](\vphi)+j}(\xi) \neq\emptyset$ 
and thus $\xi \in \ovch[]{\vphi}{\fgap[\calH_*](\vphi)+j}{\calH}$ and the 
assertions follow.
\end{proof}

We now define the \emph{gap of a generator}.

\begin{definition}[Gap of a hierarchical generator]\label{d:gapH}
Given a hierarchical generator $\calH$ we define its gap as
$\gap{\calH} = \max\{\fgap{\vphi} : \vphi \in \calH \}.$
\end{definition}

In the next result we prove that
the process of making a hierarchical generator absorbing, through
taking its smallest absorbing refinement (with Algorithm \Call{AbsRefine}{}) 
does not increase its gap. 
\begin{proposition}[\Call{AbsRefine}{} does not increase the gap]\label{p:max_gap}
Let $\calH$ be a hierarchical generator and let $\tilde{\calH}$ be the result of a call to $\Call{AbsRefine}{\calH}$ from 
Algorithm~\ref{a:make_absorbing}.
Then $\gap{\tilde{\calH}}\leq\gap{\calH}$.
\end{proposition}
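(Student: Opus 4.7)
The plan is to fix an arbitrary $\vphi \in \tilde\calH$, set $g := \fgap[\tilde{\calH}]{\vphi}$, and show $g \leq \gap{\calH}$; taking the maximum over $\vphi$ then gives the conclusion. The case $g = 0$ is trivial, so I would assume $g > 0$ and, using Lemma~\ref{l:gap}\ref{l:gap:s1} inside $\tilde\calH$, produce a witness $\eta \in \tilde\calH$ at level $\ell_\vphi - g$ with $\bbI(\vphi) \cap \bbI^g(\eta) \neq \emptyset$. The rest of the proof proceeds by case analysis on whether $\vphi$ and $\eta$ already belong to $\calH$.

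If $\vphi \in \calH$, Lemma~\ref{l:ref_and_gap}\ref{l:ref_and_gap:stay} gives $g \leq \fgap[\calH]{\vphi} \leq \gap{\calH}$ at once. Otherwise $\vphi \in \tilde\calH \setminus \calH$, which is precisely the regime where Lemma~\ref{l:new_max} applies: invoking it with $k = g$ and the chosen $\eta$ yields some $\zeta \in \calH$ and $k' \geq g$ with $\eta \in \ovch{\zeta}{-k'}{\calB}$, i.e., $\ell_\zeta = \ell_\eta + k'$ and $\bbI(\zeta) \cap \bbI^{k'}(\eta) \neq \emptyset$. This $\zeta$ is my candidate to realize the gap bound inside $\calH$; what remains is to pair it with a coarse function in $\calH$ overlapping it at distance at least $g$.

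If $\eta \in \calH$ it is already such a partner and $\fgap[\calH]{\zeta} \geq k' \geq g$, so $g \le \gap{\calH}$. The remaining subcase $\eta \in \tilde\calH \setminus \calH$ is where the real work lies: here I plan to invoke Lemma~\ref{l:ref_cause}\ref{l:ref_cause:3} on $\eta$ to produce an ancestor $\bar\xi \in \calM = \calR \cap \calH$ with $\eta \in \ch[j]{\bar\xi}$ for some $j \geq 1$. Lemma~\ref{L:children} then gives $\bbI(\eta) \subset \bbI^j(\bar\xi)$, and applying the descendants operator once more yields $\bbI^{k'}(\eta) \subset \bbI^{k'+j}(\bar\xi)$. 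Intersecting with $\bbI(\zeta)$ converts the $\zeta$--$\eta$ overlap into a $\zeta$--$\bar\xi$ overlap at distance $k' + j$, so $\bar\xi \in \ovch{\zeta}{-(k'+j)}{\calH}$ and therefore $\fgap[\calH]{\zeta} \geq k' + j \geq g$.

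The main obstacle is precisely this last subcase, where neither the fine witness $\vphi$ nor the coarse witness $\eta$ lies in $\calH$. The gap-realizing pair inside $\calH$ is not present a priori and must be assembled from two different sources: Lemma~\ref{l:new_max} pushes the fine side up into $\calH$ (producing $\zeta$), while Lemma~\ref{l:ref_cause}\ref{l:ref_cause:3} pulls the coarse side up into $\calH$ (producing $\bar\xi$). The overlap survives this double relocation only because the operator $\bbI^{(\cdot)}$ respects the parent-child chain, a fact already codified in Lemma~\ref{L:children}.
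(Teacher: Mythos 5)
Your proof is correct and follows essentially the same route as the paper's: both split on whether $\vphi$ already lies in $\calH$ and, for a new function, use Lemma~\ref{l:new_max} to relocate the fine witness into $\calH$ and realize a gap at least $\fgap[\tilde\calH]{\vphi}$ there. The only difference is that the paper obtains a coarse witness already lying in $\calH$ at the outset by invoking Lemma~\ref{l:ref_and_gap}\ref{l:ref_and_gap:new}, whose proof performs exactly the $\eta \mapsto \bar\xi$ relocation via Lemma~\ref{l:ref_cause}\ref{l:ref_cause:3} that you carry out by hand in your final subcase, so your extra case analysis re-derives a step the paper has already packaged.
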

\begin{proof}
From Lemma \ref{l:ref_and_gap}\ref{l:ref_and_gap:stay} 
if $\vphi\in\tilde{\calH}\cap\calH$ then 
$\fgap[\tilde\calH]{\vphi}\leq\fgap{\vphi}\leq\gap{\calH}$.
If $\vphi\in\tilde{\calH}\setminus\calH$ 
using Lemma \ref{l:ref_and_gap}\ref{l:ref_and_gap:new}
there is $k\geq\fgap[\tilde\calH]{\vphi}$ and 
$\eta\in\ovch{\vphi}{-k}{\calH}$.
Now using Lemma~\ref{l:new_max}
there is $\zeta\in\calH$, 
and $k'\geq k$ such that
$\eta\in\ovch{\zeta}{-k'}{\calB}$, whence $\eta\in\ovch{\zeta}{-k'}{\calH}$.
From Definition~\ref{d:gapH} and Lemma~\ref{l:gap}\ref{l:gap:s3} we get
$\gap{\calH}\geq\fgap{\eta}\geq k'\geq k\geq\fgap[\tilde\calH]{\vphi}$.
We have shown that for any $\vphi\in\tilde{\calH}$, 
$\fgap[\tilde\calH]{\vphi}\leq \gap{\calH}$ and the result follows.
\end{proof}

\begin{remark}[\Call{AbsRefine}{} could decrease the gap]
The $\gap{\tilde\calH}$ is not necessarily equal to the
gap of \calH, the gap can actually decrease after 
calling \Call{AbsRefine}{}. 
The hierarchical generator from Remark~\ref{R:absorbing-not-necessary}, has gap equal to 1, and after $\Call{AbsRefine}{}$ we obtain the generator $\calH = \{ \vphi_{-1}^2, \vphi_{0}^2, \vphi_{1}^2, \vphi_{2}^2\}$ which has gap 0.
\end{remark}


\section{Function refinement with gap constraint}\label{S:refinement with gap constraint}
Following the requirement in 
Property~\ref{p:generators}\ref{p:generators:overlap},
the refinement of a hierarchical generator $\calH$ should maintain
the gap bounded above by a fixed given positive integer $g$.
We explore on the effect of refinement on the gap of a generator in the following Lemma.
\begin{lemma}[Single refinement and gap]\label{l:single_ref_gap}
Let $\calH$ be a hierarchical generator, $\vphi\in\calH$ and $\calH_*$ its 
refinement 
after a call to $\Call{SingleRefine}{\calH,\vphi}$ of Algorithm 
\ref{a:single_ref}.
Then, if $\vphi_* \in \calH_*$,
\[
\fgap[\calH_*]{\vphi_*}\leq
\begin{cases} \fgap{\vphi} + 1, \text{ if } \vphi_*\in\calH_*\setminus\calH,\\ 
 \fgap{\vphi_*} \text{ if } \vphi_*\in\calH_*\cap\calH.
\end{cases}
\]

\end{lemma}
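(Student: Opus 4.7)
The plan is to split the argument along the two cases of the statement. The case $\vphi_* \in \calH_* \cap \calH$ is immediate: it follows directly from Lemma~\ref{l:ref_and_gap}\ref{l:ref_and_gap:stay} applied to the refinement $\calH_* \refi \calH$, which yields $\fgap[\calH_*]{\vphi_*} \leq \fgap[\calH]{\vphi_*} = \fgap{\vphi_*}$.

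For the remaining case $\vphi_* \in \calH_*\setminus\calH$, I would first observe that by Lemma~\ref{l:single_ref} (or equivalently Lemma~\ref{l:ref_identities}\ref{l:ref_identities:HrminusH} with refiner set $\calR=\{\vphi\}$), every new function satisfies $\vphi_* \in \ch{\vphi}$. Next, I would invoke Lemma~\ref{l:ref_and_gap}\ref{l:ref_and_gap:new} to obtain an integer $k \geq \fgap[\calH_*]{\vphi_*}$ such that $\ovch{\vphi_*}{-k}{\calH} \neq \emptyset$. Since $\vphi_* \in \ch{\vphi}$, the set $\bbI(\vphi_*)$ is contained in $\bbI(\ch{\vphi})$, so by monotonicity of $\ovch{\cdot}{-k}{\calH}=\bbB^{-k}(\bbI(\cdot)) \cap \calH$ in its first argument, $\ovch{\vphi_*}{-k}{\calH} \subset \ovch{\ch{\vphi}}{-k}{\calH}$. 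Applying Lemma~\ref{l:over_ch} with $j=-k$ and exponent $1$ yields $\ovch{\ch{\vphi}}{-k}{\calH} = \ovch{\vphi}{-k+1}{\calH}$, which is therefore non-empty.

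Finally, by Lemma~\ref{l:gap}\ref{l:gap:s3} applied with $g = k-1$, the non-emptiness of $\ovch{\vphi}{-(k-1)}{\calH}$ forces $k-1 \leq \fgap{\vphi}$. Chaining these inequalities gives $\fgap[\calH_*]{\vphi_*} \leq k \leq \fgap{\vphi} + 1$, as required. (The corner case $k=0$ is harmless, since then the conclusion $\fgap[\calH_*]{\vphi_*}=0 \leq \fgap{\vphi}+1$ is automatic from non-negativity of the gap.)

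The expected main obstacle is recognizing and correctly applying the right \emph{shift} of level in Lemma~\ref{l:over_ch}: the $-k$ overlap of a child of $\vphi$ corresponds to a $-k+1$ overlap of $\vphi$ itself, and this single level of descent is precisely what produces the ``$+1$'' in the bound. Everything else is direct bookkeeping with the previously established properties of the refiner set of a single refinement and of the gap.
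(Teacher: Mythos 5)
Your proof is correct and follows essentially the same route as the paper: both cases hinge on $\vphi_*\in\ch{\vphi}$, the level shift $\ovch{\ch{\vphi}}{-k}{\calH}=\ovch{\vphi}{-k+1}{\calH}$ from Lemma~\ref{l:over_ch}, and Lemma~\ref{l:gap}, with the first case dispatched by Lemma~\ref{l:ref_and_gap}\ref{l:ref_and_gap:stay} exactly as in the paper. The only cosmetic difference is that the paper argues directly that $\ovch{\vphi_*}{-k}{\calH_*}=\emptyset$ for all $k>\fgap{\vphi}+1$ via the inclusion $\ovch{\vphi_*}{-k}{\calH_*}\subset\ovch{\vphi_*}{-k}{\calH}$, whereas you reach the same bound by routing a witness through Lemma~\ref{l:ref_and_gap}\ref{l:ref_and_gap:new}; both are valid and the extra lemma you invoke is proved earlier, so there is no circularity.
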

\begin{proof}
	Recall from Lemma~\ref{l:good_def_sr} that
$\calH_* = (\calH \setminus \{\vphi\}) \dcup (\ch{\vphi}\setminus{\calC})$,
from where
$\calH_*\setminus\calH =\ch{\vphi}\setminus{\calC}$ and 
$\calH_*\cap\calH = (\calH \setminus \{\vphi\})$.

	If $\vphi_*\in\calH_*\setminus\calH = \ch{\vphi}\setminus{\calC}$,
then
$\ovch[]{\vphi_*}{-k}{\calH_*}\subset 
\ovch{\vphi_*}{-k}{\calH}\setminus\{\vphi\}$
for all $k>0$.
Also, by Lemma~\ref{l:over_ch}
$\ovch[]{\vphi_*}{-k}{\calH}\subset\ovch{\vphi}{-k+1}{\calH}$, then 
Lemma~\ref{l:gap}\ref{l:gap:s2} implies that 
$\ovch[]{\vphi_*}{-k}{\calH_*}=\emptyset$ for any $k>\fgap{\vphi}+1$, so that
$\fgap[\calH_*]{\vphi_*}\leq\fgap{\vphi} + 1$.
The case of $\vphi_* \in \calH_* \cap \calH$ follows from 
Lemma~\ref{l:ref_and_gap}\ref{l:ref_and_gap:stay}.
\end{proof}

\seba{
It is simple to construct an example of a generator with gap $g$ and a 
refinement that increases its gap.
Indeed, this can happen by refining a single function.
The possibility of \Call{SingleRefine}{} to increase the gap of a generator
makes it \emph{necessary} to find a new mechanism to obtain the smallest 
refinement that ensures the bound on the gap.  Thus, given a hierarchical 
generator \calH and $\vphi\in\calH$, consider the set
\begin{equation}\label{e:refi_gap_set}
\textgoth{R}_g(\calH, \vphi) =\{\calH_ : \calH_*\refi\calH, \vphi\in\calL_* 
\text{ and } 
\gap{\calH_*}\leq g\},
\end{equation}
where $g$ is a positive integer that we consider fixed from now on.
We would like to find the smallest element of  \eqref{e:refi_gap_set}.}
\pedro{In order to do it, we first observe the following.}

\pedro{
	\begin{remark}\label{R:necessity-of-refining-ancestors}
		Let $\calH$ be a hierarchical generator with $\gap(\calH)\le g$, let $\vphi \in \calH$ and let $\calH_*$ be the refinement obtained after a call to $\Call{SingleRefine}{\calH,\vphi}$. Then, as a consequence of Lemmas~\ref{l:single_ref_gap} and~\ref{l:over_ch}, we have:
		\begin{enumerate}[(i)]
			\item If $\ovch{\vphi}{-g}{\calH} = \emptyset$, then $\gap_\calH(\vphi) \le g-1$ and thus $\gap(\calH_*) \le g$;
			\item If $\ovch{\vphi}{-g}{\calH} \neq \emptyset$, then $\gap_\calH(\vphi) = g$ and thus $\gap(\calH_*) = g+1$.
		\end{enumerate}
	\end{remark}
}

\pedro{Taking this observation into account, we now propose
Algorithm \ref{a:gc_single_ref}, which finds the least element of $\textgoth{R}_g(\calH, \vphi)$, as shown in Lemma \ref{l:gc_single_ref}.}

\begin{algorithm}[h!]
\caption{Refine one function with gap control}\label{a:gc_single_ref}
\begin{algorithmic}[1]
\Function{GCSingleRefine}{$\calH$,$\vphi$}
\While{ $\exists \vphi' \in \ovch{\vphi}{-g}{\calH}$ }
\State \Call{GCSingleRefine}{$\calH$,$\vphi'$}
\EndWhile
\State \Call{SingleRefine}{$\calH$,$\vphi$}
\EndFunction
\end{algorithmic}
\end{algorithm}

\begin{lemma}[Properties of GCSingleRefine]\label{l:gc_single_ref}
Let $\calH$ be a hierarchical generator with $\gap(\calH)\leq g$ and
$\vphi\in\calH$.
A call to $\Call{GCSingleRefine}{\calH,\vphi}$
modifies the set \calH, yielding a hierarchical generator $\bar{\calH}\refi 
\calH$,
which is the smallest element of the family $\textgoth{R}_g(\calH, \vphi)$\pedro{, i.e., $\bar \calH$ is the smallest refinement of $\calL\cup\{\vphi\}$ with $\gap$ bounded by $g$}. 
Furthermore, for each $\psi\in\calR\setminus\{\vphi\}$ there exists $k\in\Z$ with
$1\leq k \leq \lfloor\frac{\l_{\vphi}}{g}\rfloor$ such that
$\psi \in \ovch[k]{\vphi}{-g}{\pedro{\calB}}$. \pedrotodo{Acá decía 
	$\psi \in \ovch[k]{\vphi}{-g}{\calR\cap\calH}$. Esto último es más fuerte y necesita de la versión del Algoritmo~\ref{a:gc_single_ref_v2} pero parece que nunca se usa.}
\end{lemma}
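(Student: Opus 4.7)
My plan is to organize the proof around four separate claims --- termination, refinement correctness, gap bound, and minimality --- and handle them by induction on the recursion depth of \Call{GCSingleRefine}{}. Termination is easy: each recursive call on a $\vphi' \in \ovch{\vphi}{-g}{\calH}$ has $\ell_{\vphi'} = \ell_\vphi - g$, so the depth of the recursion tree is bounded by $\lfloor \ell_\vphi / g\rfloor$. For the refinement claim, note that every operation modifying $\calH$ is a call to \Call{SingleRefine}{}, so by Lemma~\ref{l:single_ref} the final $\bar\calH$ is a hierarchical refinement of $\calH$ with $\bar\calL = \calL \cup \calR$, where $\calR$ is the (finite) set of all arguments passed to \Call{SingleRefine}{} during the recursion. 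Clearly $\vphi \in \calR$, since that is the last such call in the outer invocation. The characterization of $\calR\setminus\{\vphi\}$ follows by induction on recursion depth: an inner call \Call{GCSingleRefine}{$\calH$, $\psi_k$} at depth $k\geq 1$ was triggered because $\psi_k \in \ovch{\psi_{k-1}}{-g}{\calH}$ for some $\psi_{k-1}$ treated at depth $k-1$, so $\psi_k \in \ovch[k]{\vphi}{-g}{\calB}$ and $\ell_{\psi_k} = \ell_\vphi - kg \geq 0$, giving $k \le \lfloor \ell_\vphi/g\rfloor$.

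For the gap bound, I proceed by induction on the recursion depth. For a call \Call{GCSingleRefine}{$\calH, \eta$} where $\calH$ has gap $\le g$, the inductive hypothesis guarantees that after each inner recursive call the working generator still has gap $\le g$, since the set $\ovch{\eta}{-g}{\cdot}$ at the level of $\eta$ is not repopulated: \Call{SingleRefine}{$\cdot$, $\psi$} only introduces children at level $\ell_\psi+1$, and every $\psi$ on which we recurse has $\ell_\psi \le \ell_\eta - g$, so its children live at level $\le \ell_\eta - g + 1 \ne \ell_\eta - g$. Consequently the \textbf{while} loop eventually exits with $\ovch{\eta}{-g}{\calH} = \emptyset$, i.e.\ $\fgap{\eta} \le g-1$. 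Applying Lemma~\ref{l:single_ref_gap} to \Call{SingleRefine}{$\calH,\eta$}, any new child $\eta_* \in \calH_*\setminus\calH$ satisfies $\fgap[\calH_*]{\eta_*} \le \fgap[\calH]{\eta} + 1 \le g$, while surviving functions only see their gap weakly decrease. Hence the gap remains bounded by $g$ after this call, closing the induction.

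For minimality, let $\calH_* \in \textgoth{R}_g(\calH,\vphi)$ be any generator satisfying the constraints; I will show $\bar\calL \subset \calL_*$. By definition $\vphi\in\calL_*$. The key observation is the following: if $\vphi' \in \ovch{\vphi}{-g}{\calH}$, then $\vphi' \in \calL_*$. Indeed, suppose otherwise; since $\vphi' \in \calH$ and $\vphi' \notin \calR' := \calL_*\setminus \calL$ we have $\vphi' \in \calH_* \cap \calH$ by Lemma~\ref{l:ref_identities}\ref{l:ref_identities:HminusHr}. On the other hand, $\vphi \in \calL_*$, so by Lemma~\ref{l:H_L_props}\ref{l:H_L_props:L_from_H} there is some $\eta \in \calH_* \cap \descs(\vphi)$ positive on at least part of some cell $I\in\bbI(\vphi)$ that meets $\bbI(\vphi')$ at depth $g$; then $\bbI(\eta)\cap\bbI^{g+k}(\vphi')\ne\emptyset$ with $k=\ell_\eta-\ell_\vphi\geq 1$, so $\eta\in\ovch[]{\vphi'}{g+k}{\calH_*}$ and the gap of $\vphi'$ in $\calH_*$ would be at least $g+k > g$, contradicting $\gap(\calH_*)\le g$. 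Therefore every $\vphi'\in\ovch{\vphi}{-g}{\calH}$ satisfies $\vphi'\in\calL_*$; applying the same reasoning recursively (the hypothesis $\calH_* \in \textgoth{R}_g(\calH,\vphi')$ holds because $\vphi'\in\calL_*$), every ancestor added to $\bar\calL$ by the inner recursion also belongs to $\calL_*$. Thus $\bar\calL \subset \calL_*$, i.e.\ $\calH_* \refi \bar\calH$, proving that $\bar\calH$ is the least element of $\textgoth{R}_g(\calH,\vphi)$. I expect the only delicate step to be the gap-contradiction argument in the last paragraph, where care is needed to connect the overlap of the ancestor $\vphi'$ with $\vphi$ to an actual overlap with a descendant $\eta\in\calH_*$ via Corollary~\ref{c:H_L_props}\ref{c:H_L_props:L_overlap_by_dsc}.
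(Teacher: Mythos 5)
Your proof is correct and follows essentially the same route as the paper: termination via the strict level decrease of the recursion, the gap bound via the observation that \Call{SingleRefine}{} is only reached once $\ovch{\vphi}{-g}{\calH}=\emptyset$ combined with Lemma~\ref{l:single_ref_gap}, and minimality via the necessity of refining every coarse overlapping ancestor. Your minimality argument is in fact more detailed than the paper's (which defers to Remark~\ref{R:necessity-of-refining-ancestors}): you derive the contradiction explicitly by producing a descendant $\eta\in\calH_*\cap\descs(\vphi)$ via Corollary~\ref{c:H_L_props}\ref{c:H_L_props:L_overlap_by_dsc} whose gap in $\calH_*$ would exceed $g$ — note only that the function with excessive gap is $\eta$, not $\vphi'$ as you wrote, which is harmless since $\gap(\calH_*)\le g$ bounds every function. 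One justification in your gap paragraph is wrong, however: the claim that $\ovch{\eta}{-g}{\cdot}$ is never repopulated because children of recursed-on functions live at level ``$\le \l_\eta-g+1\ne\l_\eta-g$'' fails for $g=1$, where a child of a function at level $\l_\eta-2g$ lands exactly at level $\l_\eta-g$; the paper flags precisely this as the technical obstruction in Remark~\ref{R:gc_single_ref_v2}. Fortunately this claim is not load-bearing for Algorithm~\ref{a:gc_single_ref}: the \textbf{while} loop re-tests its condition, so upon exit $\ovch{\vphi}{-g}{\calH}=\emptyset$ holds regardless, and termination is already secured by your finiteness argument. The non-repopulation property is only needed to justify the \textbf{for}-loop variant (Algorithm~\ref{a:gc_single_ref_v2}), so you should either drop that clause or restrict it to $g\ge 2$.
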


\pedro{
\begin{remark}\label{R:never compute gap}
It is worth noticing that in order to keep the gap bounded by $g$ it is never necessary to compute the gap of a hierarchical generator, which would be rather costly. More precisely, if we start with a hierarchical generator with gap bounded by $g$, such as $\calH = \calB^0$, every hierarchical generator obtained via repeated subsequents calls to \Call{GCSingleRefine}{} will have its gap bounded $g$ automatically. The only thing that must be produced are the sets $O_{\vphi}=\ovch{\vphi}{-g}{\calH}$, which are mere intersections of index sets (see Remark~\ref{R:Osets}).
\end{remark}
}
\begin{proof}
\pedro{
	Notice first that if $\vphi \in \calB^\l$, then $\ovch{\vphi}{-g}{\calH}$ is a subset of $\calB^{\ell - g}$, so that all the calls to $\Call{GCSingleRefine}{}$ involved in the recursion will be made to B--splines from $\cup_{k=1}^{\lfloor \l/g \rfloor} \calB^{\l-kg} \subset \cup_{j < \l} \calB^j$ which is finite. Hence, the algorithm will end in finite time.
	}
\pedro{
	Moreover, since all the calls to $\Call{GCSingleRefine}{}$ and $\Call{SingleRefine}{}$ will be made with functions from $\calB^j$ with $j < \ell$, after the execution of the while loop, $\vphi$ will still belong to $\calH$ and $\Call{SingleRefine}{\calH,\vphi}$ is a valid call. Since a call to $\Call{SingleRefine}{\calH,\vphi}$ will add to $\calH$ functions of level $\l_\vphi+ 1$ we immediately obtain the last assertion of the Lemma.
}

\pedro{
	We now prove that all the time $\gap(\calH)\le g$. Recall that the first call to $\Call{GCSingleRefine}{}$ is done with $\gap(\calH)\le g$ and notice that $\calH$ is only modified through the execution of line 5 ($\Call{SingleRefine}{}$), from the many calls to the recursive function $\Call{GCSingleRefine}{}$.
	The assertion will be proved if we show that executing line 5 with $\gap(\calH)\le g$ leads to a new hierarchical generator with gap less than or equal to $g$. Notice that line 5 is reached after the while loop has ended, so that $\ovch{\vphi}{-g}{\calH} = \emptyset$, and thus $\gap_\calH(\vphi) \le g-1$. From Lemma~\ref{l:single_ref_gap}, the hierarchical generator obtained after executing line 5 has gap bounded by $g$.
	}
	
\pedro{
	Notice also that if $\ovch{\vphi}{-g}{\calH} \neq\emptyset$, a call to $\Call{SingleRefine}{\calH,\vphi}$ would lead to $\gap(\calH) > g$ according to Remark~\ref{R:necessity-of-refining-ancestors}. It is thus \emph{necessary} to refine all functions in $\ovch{\vphi}{-g}{\calH}$ to maintain $\gap(\calH)\le g$ after executing line 5. This shows that any hierarchical generator in $\textgoth{R}_g(\calH, \vphi)$ must be larger than the one obtained by this algorithm.
}
\end{proof}

\pedro{
\begin{remark}\label{R:gc_single_ref_v2}
It is not difficult to prove that Algorithm~\ref{a:gc_single_ref} is equivalent to Algorithm~\ref{a:gc_single_ref_v2}.
The main difference being that in the latter the set $O_\vphi$ is defined before entering the recursive loop. 
The equivalence relies on the fact that the set $\ovch{\vphi}{-g}{\calH}$ of Algorithm~\ref{a:gc_single_ref} does not increase during the execution of the while loop.
This is easy to see if $g \ge 2$ because in this case, each call to  $\Call{GCSingleRefine}{}$ inside the loop would incorporate into $\calH$ functions of level $\ell_\vphi - k g + 1$, for $k \in \Z_+$, which are never from the same level as those in $\ovch{\vphi}{-g}{\calH}$. The case $g=1$ is also true, but the proof is rather technical.

Observing Algorithm~\ref{a:gc_single_ref_v2} it is easy to conclude that if $\calR = \calH_* \setminus \calH$, with $\calH_*$ the result of $\Call{GCSingleRefine}{\calH,\vphi}$ then for each $\psi\in\calR\setminus\{\vphi\}$ there exists $k\in\Z$ with
 $1\leq k \leq \lfloor\frac{\l_{\vphi}}{g}\rfloor$ such that
 $\psi \in \ovch[k]{\vphi}{-g}{\calR\cap\calH}$. 
\end{remark}
}

\begin{algorithm}[h!]
\caption{Refine one function with gap control (version 2)}\label{a:gc_single_ref_v2}
\begin{algorithmic}[1]
\Function{GCSingleRefine}{$\calH$,$\vphi$}
\State $O_{\vphi}=\ovch{\vphi}{-g}{\calH}$
\For{$\vphi'\in O_{\vphi}$}
\State \Call{GCSingleRefine}{$\calH$,$\vphi'$}
\EndFor
\State \Call{SingleRefine}{$\calH$,$\vphi$}
\EndFunction
\end{algorithmic}
\end{algorithm}

Let $\calH$ be a hierarchical generator with $\fgap[]{\calH} \le g$ and let
$\calM\subset\calH$ be a given set of functions to be refined, 
Algorithm~\ref{a:refine_gc} will refine the functions maintaining the gap under control 
($\le g$).
\begin{algorithm}[h]
\caption{Refine $\calM$ with gap control}\label{a:refine_gc}
\begin{algorithmic}[1]
\Function{GCRefine}{$\calH$,$\calM$}
\While{$\pedro{\exists} \vphi \in \calM\cap\calH$}
\State \Call{GCSingleRefine}{$\calH$,$\vphi$}
\EndWhile
\EndFunction
\end{algorithmic}
\end{algorithm}

\begin{lemma}[Properties of \Call{GCRefine}{}]\label{l:gc_refine}
 Let $\calH$ be a hierarchical generator with $\gap(\calH)\leq g$ and
$\calM\subset\calH$.
A call to  \Call{GCRefine}{$\calH$,$\calM$} of Algorithm \ref{a:refine_gc} 
modifies the set \calH, yielding a hierarchical generator $\bar{\calH}\refi 
\calH$, and a refiner set $\bar{\calR}:=\bar{\calL}\setminus\calL$ satisfying:
\begin{enumerate}[(i)]
 \item $\calM\subset\bar\calR$
 \item $\gap{\bar\calH} \leq g$
 \item 
 $\bar\calR \setminus \calM 
\subset\cup_{\vphi\in\calM}\cup_{k=1}^{\lfloor\frac{\l_{\vphi}}{g}\rfloor} 
\ovch[k]{\vphi}{-g}{\calB}$
\item\label{l:gc_refine:R} for each $\psi\in\bar\calR\setminus \calM$ 
there is $\vphi\in\calM$ and $k > 0$ such that
$\psi\in\ovch[k]{\vphi}{-g}{\calB}$
\item\label{l:gc_refine:new}
if $\eta\in\bar\calH\setminus\calH$ then
there exists $\vphi \in \calM$ such that either $\eta\in\ch{\vphi}$ or $ 
\eta\in\ch{\ovch[k]{\vphi}{-g}{\calB}}$ for some $k > 0$.
 \end{enumerate}
\end{lemma}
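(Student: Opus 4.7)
The plan is to proceed by induction on the iterations of the while loop. Set $\calH_0 := \calH$; for $i \ge 1$, let $\vphi_i \in \calM \cap \calH_{i-1}$ be the function selected in the $i$-th pass and $\calH_i$ the state of $\calH$ right after the call \Call{GCSingleRefine}{$\calH_{i-1}, \vphi_i$}. Assuming inductively that $\gap(\calH_{i-1}) \le g$, Lemma~\ref{l:gc_single_ref} asserts that $\calH_i \refi \calH_{i-1}$, $\gap(\calH_i)\le g$, and the step-refiner $\calR_i := \calL_i \setminus \calL_{i-1}$ satisfies
\[
\calR_i \;\subset\; \{\vphi_i\} \cup \bigcup_{k=1}^{\lfloor \l_{\vphi_i}/g \rfloor} \ovch[k]{\vphi_i}{-g}{\calB}.
\]
Since $\gap(\calH)\le g$ is assumed and $\vphi_i \in \calM \cap \calH_{i-1}$ is guaranteed by the loop guard, the preconditions of each successive call to \Call{GCSingleRefine}{} are met.

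For termination, note that $\calH_i = \calC_i \setminus \calL_i$ with $\calL_{i-1} \subset \calL_i$ by monotonicity of refinement, and $\vphi_i \in \calR_i \subset \calL_i \setminus \calL_{i-1}$, so $\vphi_i \notin \calH_i$. Because $\calM \subset \calH_0$ is fixed throughout the process, one has $\calM \cap \calH_i = \{\vphi \in \calM : \vphi \notin \calL_i\}$, a strictly decreasing sequence of sets. Hence the loop terminates after $N \le \#\calM$ iterations, and we set $\bar\calH := \calH_N$, $\bar\calL := \calL_N$, $\bar\calR := \bar\calL \setminus \calL$. Assertion~(ii) is then immediate from the inductive preservation $\gap(\calH_i)\le g$.

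For~(i), every $\vphi \in \calM$ must eventually leave $\calH$ under the loop guard, and by monotonicity of $\calL$ this entails $\vphi \in \bar\calL \setminus \calL = \bar\calR$. For~(iii) and~(iv), taking $\bar\calR = \bigcup_{i=1}^N \calR_i$, combining with the inclusion displayed above, and using that each $\vphi_i \in \calM$ yields
\[
\bar\calR \;\subset\; \calM \,\cup\, \bigcup_{\vphi \in \calM} \bigcup_{k=1}^{\lfloor \l_{\vphi}/g \rfloor} \ovch[k]{\vphi}{-g}{\calB},
\]
so that $\bar\calR \setminus \calM$ is contained in the double union on the right; this is~(iii), of which~(iv) is a pointwise restatement. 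Finally, (v) is read off from Lemma~\ref{l:ref_identities}\ref{l:ref_identities:HrminusH}: any new $\eta \in \bar\calH \setminus \calH$ is the child of some $\psi \in \bar\calR$, and (iii)--(iv) classify $\psi$ as either an element of $\calM$ (giving $\eta \in \ch{\vphi}$ for some $\vphi \in \calM$) or as a member of $\ovch[k]{\vphi}{-g}{\calB}$ for some $\vphi \in \calM$ and $k > 0$ (giving $\eta \in \ch{\ovch[k]{\vphi}{-g}{\calB}}$).

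The main delicate point is the termination bookkeeping: ensuring that the invariant $\calM \cap \calH_i = \{\vphi \in \calM : \vphi \notin \calL_i\}$ really holds through every call. This rests on the monotone growth of $\calL$ under refinement and the fact that $\calM$ is frozen at the start, so no function newly introduced into $\calH_i$ (necessarily a child of some refined function, hence not in $\calH_0$) can inflate $\calM \cap \calH_i$. With that in hand, all five assertions follow by a direct chase through Lemma~\ref{l:gc_single_ref} and Lemma~\ref{l:ref_identities}.
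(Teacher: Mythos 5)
Your proof is correct and follows essentially the same route as the paper: iterate over the while loop, verify inductively that the preconditions of \Call{GCSingleRefine}{} hold at each pass, decompose $\bar\calR$ as the disjoint union of the per-step refiners $\calR_i = \calL_i\setminus\calL_{i-1}$, and read off (i)--(v) from Lemma~\ref{l:gc_single_ref} together with Lemma~\ref{l:ref_identities}\ref{l:ref_identities:HrminusH}. The paper's version is terser (it dismisses (i)--(ii) and (iv)--(v) as immediate), so your explicit termination bookkeeping via $\calM\cap\calH_i=\{\vphi\in\calM:\vphi\notin\calL_i\}$ is a welcome elaboration rather than a departure.
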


\begin{proof}
\pedro{Assertions (i) and (ii) are immediate consequences of Lemma~\ref{l:gc_single_ref} and assertion (iii) follows from Remark~\ref{R:gc_single_ref_v2} y (iv)--(v) son consecuencias directas de (i)--(iii).}
 Observe that if $\calM\not=\emptyset$ the call to \Call{GCSingleRefine}{} 
inside the {\bf while} loop in Algorithm \ref{a:refine_gc} is
 executed at least one time and at most $\#\calM$ times.
 Let $\vphi_1,\dots,\vphi_M$ be the functions in $\calM$ that were passed to 
 \Call{GCSingleRefine}{} in sequencial order inside the loop.
 Let $\calH_0=\calH$, and $\calH_1,\dots,\calH_M$ be the hierarchical generators 
obtained after each 
iteration of the
 {\bf while} loop, then $\calH_M=\bar\calH$ and $M \le \#\calM$.
From Lemma \ref{l:gc_single_ref} it follows that each $\calH_j$ is a 
hierarchical generator
and $\gap{\calH_j}\leq g$, so the same holds for $\calH$.
Moreover, $\calR=\calR_1\dcup\dots\dcup\calR_{M-1}$, with $\calR_j = \calL_j 
\setminus \calL_{j-1}$. Then, if $\psi\in\bar\calR$ 
it must belong to one of the $\calR_j$ and again Lemma \ref{l:gc_single_ref} 
implies (iii), which immediately implies (iv) and (v).
%
 \end{proof}

\begin{algorithm}[h]
\caption{Refine $\calM$ with gap and absorbing constraints}\label{a:refine}
\begin{algorithmic}
\Function{GARefine}{$\calH$,$\calM$}
\State \Call{GCRefine}{$\calH$,$\calM$}
\State $\tilde\calH$ = \Call{AbsRefine}{$\calH$}
\State \Return $\tilde\calH$
\EndFunction
\end{algorithmic}
\end{algorithm}

The following result shows that if a function is refined in
the process to make the generator absorbing then its cause can be traced 
to a function refined in the first step of refinement, i.e. in the gap controlled refinement step.
\begin{lemma}[Absorbing refinement to gap control]\label{l:max_to_gc}
Let $\calH$ be an absorbing basis with $\gap(\calH)\leq g$
and $\calM\subset\calH$.
Let $\bar\calH$ be the state of the generator \calH right after the call to  
\Call{GCRefine}{$\calH$,$\calM$} in Algorithm \ref{a:refine}.
Let $\bar\calR \pedro{= \bar{\calL}\setminus \calL}$ be the refiner of 
$\bar\calH$ with respect to \calH
and $\tilde\calR = \tilde{\calL}\setminus \bar{\calL}$ the refiner of 
$\tilde\calH$ with respect to 
$\bar\calH$.
Then, there exists a constant $C$, only depending on $m$, $n$ and $g$, such that
for each $\psi\in\tilde\calR$ there is $\vphi\in\calM$ with
$|\rho(\vphi,\psi )| \leq C$ and $\l_\vphi - \l_\psi \ge -g$.
\end{lemma}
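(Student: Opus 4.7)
The plan is to use Lemma~\ref{l:abs_ref}\ref{l:abs_ref:absd}, which characterizes $\psi\in\tilde\calR$ through the support containment $\bbI(\psi)\subset\bbI(\bar\calL^{\ell_\psi})=\bbI(\calL^{\ell_\psi})\cup\bbI(\bar\calR^{\ell_\psi})$, and then to trace any witness in $\bar\calR$ back to $\calM$ via Lemma~\ref{l:gc_refine}\ref{l:gc_refine:R}. The proof proceeds by induction on $\ell_\psi$, splitting into a ``direct'' case (some cell of $\bbI(\psi)$ already belongs to $\bbI(\bar\calR^{\ell_\psi})$) and a ``deep'' case ($\bbI(\psi)\subset\bbI(\calL^{\ell_\psi})$).

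In the direct case, pick $\xi\in\bar\calR^{\ell_\psi}$ with $\bbI(\xi)\cap\bbI(\psi)\neq\emptyset$, so $\xi$ and $\psi$ share a cell at the same level. Lemma~\ref{l:gc_refine}\ref{l:gc_refine:R} gives either $\xi\in\calM$, in which case we take $\vphi=\xi$ (so $\ell_\vphi=\ell_\psi$ and $|\rho(\vphi,\psi)|<m$), or $\xi\in\ovch[k]{\vphi}{-g}{\calB}$ for some $\vphi\in\calM$ and $k\ge 1$, yielding $\ell_\vphi=\ell_\psi+kg\ge\ell_\psi$ and $|\rho(\vphi,\xi)|$ bounded by Lemma~\ref{l:overlapping}\ref{l:overlapping:rho}. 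Since $\xi$ and $\psi$ share a cell at the same level, $|\rho(\vphi,\psi)|\le|\rho(\vphi,\xi)|+|\vec i_\xi-\vec i_\psi|$ gives a bound depending only on $m$, $n$, $g$.

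In the deep case, the absorbing property of $\calH$ excludes $\psi\in\calC^{\ell_\psi}$ (else $\psi\in\calL^{\ell_\psi}\subset\bar\calL$, contradicting $\psi\in\tilde\calR$), hence $\ell_\psi\ge 1$ and $\psi$ is a child of some $\xi'\in\tilde\calL^{\ell_\psi-1}\setminus\calL^{\ell_\psi-1}=\bar\calR^{\ell_\psi-1}\cup\tilde\calR^{\ell_\psi-1}$. If $\xi'\in\bar\calR^{\ell_\psi-1}$, applying the direct-case argument to $\xi'$ yields $\vphi\in\calM$ with $\ell_\vphi\ge\ell_{\xi'}=\ell_\psi-1\ge\ell_\psi-g$, and the bound on $|\rho(\vphi,\psi)|$ follows from $\psi\in\ch{\xi'}$ (using Lemma~\ref{l:desc_rho} for $|\rho(\psi,\xi')|\le m$ and noting that changing the reference level by one inflates $|\rho|$ by at most a factor of $n$ with an additive $sm$ by Definition~\ref{d:bsp_ch}). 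Otherwise $\xi'\in\tilde\calR^{\ell_\psi-1}$ and the inductive hypothesis applies to $\xi'$.

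The main obstacle is this last recursive sub-case, where naive induction would lose one level of the gap bound at each parent step. I will resolve it by tracking the chain $\psi=\psi_0,\psi_1,\ldots,\psi_r$ of successive $\tilde\calR$-parents (each in the deep case) and showing that the chain terminates in at most $g$ steps. The key point is that each $\psi_i\notin\calC$, so the overlap witness $\eta\in\ovch{\psi_i}{k}{\bar\calH}$ provided by Lemma~\ref{l:abs_ref}\ref{e:gap1}, together with $\gap(\bar\calH)\le g$ from Lemma~\ref{l:gc_refine}, forces, once $i>g$, some cell of $\bbI(\psi_i)$ to fall in $\bbI(\bar\calR^{\ell_{\psi_i}})$, returning us to the direct case with the sharper estimate $\ell_\vphi\ge\ell_{\psi_i}$. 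Since $|\rho|$ grows by a factor of $n$ (plus at most $sm$) per parent step, after at most $g$ steps the final constant $C$ depends only on $m$, $n$, $g$, and the level bound $\ell_\vphi\ge\ell_\psi-g$ is preserved.
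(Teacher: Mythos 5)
Your overall strategy — pull $\psi\in\tilde\calR$ up through a chain of parents in $\tilde\calL\setminus\calL$ until you hit $\bar\calR$ (directly or via a same-level overlap witness), then push down to $\calM$ with Lemma~\ref{l:gc_refine}\ref{l:gc_refine:R} and chain the $\rho$-estimates — is the right shape, and your direct case and your ``$\bar\calR$-parent'' case are handled correctly. The gap is exactly where you said the obstacle is: the termination claim for the recursive deep-case chain is asserted, not proved. You claim that $\gap(\bar\calH)\le g$ together with the witness $\eta\in\ovch{\psi_i}{k}{\bar\calH}$ from Lemma~\ref{l:abs_ref}\ref{e:gap1} ``forces, once $i>g$, some cell of $\bbI(\psi_i)$ to fall in $\bbI(\bar\calR^{\ell_{\psi_i}})$.'' But $\gap(\bar\calH)\le g$ only constrains level differences between \emph{two functions both belonging to} $\bar\calH$. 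The witness $\zeta\in\ovch{\psi_0}{k_1}{\bar\calH}$ does lie in $\bar\calH$ and does overlap every ancestor $\psi_i$ of $\psi_0$, so the gap bound would give $i+k_1\le g$ \emph{provided} $\psi_i\in\bar\calH$; your chain elements, however, are only known to be in $\tilde\calR$, and your stopping rule (``direct case or a parent in $\bar\calR$'') does not guarantee that the terminal element lies in $\bar\calH$. In particular, if the chain stops at $\psi_r$ because $\bbI(\psi_r)\not\subset\bbI(\calL^{\l_{\psi_r}})$ while $\psi_r\notin\bar\calC$, no gap argument applies, $r$ is unbounded, and both the constant $C$ (which picks up a factor $n$ per step) and the inequality $\l_\vphi-\l_\psi\ge-g$ (since here $\l_\vphi\ge\l_{\psi_r}=\l_\psi-r$) break down. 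Since a B--spline has many parents when $m\ge 2$, you also cannot assume your parent choices track an ancestor path that lands in $\bar\calH$ quickly.

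The paper closes this hole by reorganizing the argument: it first invokes Lemma~\ref{l:ref_cause}\ref{l:ref_cause:1} for the refinement $\tilde\calH\refi\bar\calH$ to obtain an ancestor $\eta\in\tilde\calR\cap\bar\calH$ with $\psi\in\ch[k_0]{\eta}$ — i.e., the chain is stopped at the first ancestor that is \emph{in} $\bar\calH$ — and only then bounds $k_0$: Lemma~\ref{l:abs_ref}\ref{e:gap1} gives $\ovch{\psi}{k_1}{\bar\calH}\neq\emptyset$ for some $k_1>0$, Lemma~\ref{l:over_ch} transports this to $\ovch{\eta}{k_1+k_0}{\bar\calH}\neq\emptyset$, and since both endpoints of that overlap are in $\bar\calH$ one gets $k_0+k_1\le\gap(\bar\calH)\le g$, hence $k_0<g$. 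The case split on whether $\bbI(\eta)\subset\bbI(\calL^{\l_\eta})$ is then performed on this single $\eta$ (using that $\calH$ is absorbing to place $\eta$ in $\bar\calH\setminus\calH$ in the first case, and producing the same-level witness $\mu\in\bar\calR^{\l_\eta}$ in the second), rather than recursively at every level. If you replace your stopping rule by ``first ancestor in $\bar\calH$'' and insert the $\ovch{\psi}{k_1}{\bar\calH}\subset\ovch{\eta}{k_1+k_0}{\bar\calH}$ step, your argument becomes the paper's.
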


\begin{proof}
Let $\psi\in\tilde\calR$ then by Lemma 
\ref{l:ref_cause}\ref{l:ref_cause:1}
there is $\eta\in\bar\calH\setminus\tilde\calH$ such that
$\psi\in\ch[k_0]{\eta}$ for some $k_0\geq 0$.
Using Lemma~\ref{l:abs_ref}\ref{e:gap1} and
Lemma~\ref{p:gap_ch} there is $k_1>0$ such that
$\emptyset \not= \ovch{\psi}{k_1}{\bar{\calH}}
\subset  \ovch{\eta}{k_1+k_0}{\bar{\calH}}$.
From where we get
$g\ge\gap{\calH}=\gap{\bar{\calH}}\geq k_1+k_0 > k_0$.
Now we consider the two possible cases:

\begin{case} ($\bbI(\eta)\subset \bbI(\calL^{\l_{\eta}})$). 
If $\bbI(\eta)\subset \bbI(\calL^{\l_{\eta}})$, 
then $\eta \notin \calH$ because $\calH$ is absorbing, and thus 
$\eta\in\bar\calH\setminus\calH$.
Now from Lemma~\ref{l:gc_refine}\ref{l:gc_refine:new}
there exist $\vphi\in\calM$ and  $k_2\geq 0$ such that 
$\eta\in\ch{\ovch[k_2]{\vphi}{-g}{\calB}}$, thus
$\psi\in\ch[k_0+1]{\ovch[k_2]{\vphi}{-g}{\calB}}$
and $\l_\psi - \l_\vphi = k_0 + 1 - k_2 g \le g$.
Using that $\l_\psi-\l_\eta=k_0$ and Lemma~\ref{l:rho_triangle}  we get
\begin{equation}
\rho(\vphi,\psi) = n^{k_0} \rho(\vphi,\eta) + \rho(\eta,\psi).
\end{equation}
On the one hand, since there exists $\xi\in\ovch[k_2]{\vphi}{-g}{\calB}$ which 
is a parent of $\eta$, 
Lemma~\ref{l:rho_triangle} yields
$\rho(\vphi,\eta)= n\rho(\vphi,\xi)+\rho(\xi,\eta)$.
By Lemma~\ref{l:desc_rho}, $|\rho(\xi,\eta)|\leq m(n-1)$,
and from Lemma~\ref{l:overlapping}~\ref{l:overlapping:rho},
$|\rho(\vphi,\xi)|\leq C\le 2m$ so that
\begin{equation*}
|\rho(\vphi,\eta)|\leq n(C+m)\le 3nm.
\end{equation*}
On the other hand, since $\psi\in\ch[k_0]{\eta}$, Lemma~\ref{l:desc_rho} implies
\begin{equation}\label{e:eta_psi}
 |\rho(\eta,\psi)| \leq n^{k_0} m,
\end{equation}
from where 
\begin{equation}\label{caso1}
|\rho(\vphi,\psi)|\leq  n^{g} (3n+1) m,
\end{equation}
because $k_0 \le g$.
\end{case}

\begin{case} ($\bbI(\eta)\not\subset \bbI(\calL^{\l_{\eta}})$). 
	If $\bbI(\eta)\not\subset \bbI(\calL^{\l_{\eta}})$, 
	$\eta \notin \calL$ and also $\eta \in \bar \calH \setminus 
\tilde{\calH}$ so that $\eta \notin \tilde{\calH}$ and thus 
$\eta\in\tilde{\calR}$.
	By Lemma~\ref{l:abs_ref}\ref{l:abs_ref:absd} and the fact that 
$\bbI(\eta)\not\subset \bbI(\calL^{\l_{\eta}})$, it follows that there is 
$\mu\in \big(\bar\calL^{\l_\eta}\setminus \calL^{\l_\eta}\big) 
\cap\ovch[]{\eta}{0}{\calB} = 
\bar\calR^{\l_{\eta}}\cap\ovch[]{\eta}{0}{\calB}$
Since $\mu\in\bar\calR$, Lemma~\ref{l:gc_refine}\ref{l:gc_refine:R} implies
there is $\vphi\in\calM$ and $k_2\geq 0$ such that
$\mu\in\ovch[k_2]{\vphi}{-g}{\calB}$. Thus using
Lemma~\ref{l:rho_triangle} and that 
$\l_\psi-\l_\eta=\l_\psi-\l_\mu=k_0$ we get 
$\l_\vphi - \l_\vphi = \l_\vphi - \l_\eta + \l_\eta - \l_\psi = k_2g - 
k_0 \ge -g$ and
\begin{equation}
\rho(\vphi,\psi) = n^{k_0} \rho(\vphi,\mu) + 
 n^{k_0} \rho(\mu,\eta) +\rho(\eta,\psi).
\end{equation}
Lemma~\ref{l:overlapping}\ref{l:overlapping:rho} leads to
$|\rho(\vphi,\mu)|\leq 2m$ and Remark~\ref{R:ballofsplinessamelevel} yields $|\rho(\mu,\eta)|\leq m$, whence
\begin{equation}\label{caso2}
|\rho(\vphi,\psi)| \leq 4mn^{k_0} \le 4mn^g
\end{equation}
due to  \eqref{e:eta_psi} and the fact that $k_0 \le g$.
\end{case}
The assertion follows from~\eqref{caso1} and~\eqref{caso2}.
\end{proof}

\begin{theorem}[Properties of \Call{GARefine}{}]\label{l:refine}
 Let $\calH$ be an absorbing hierarchical basis with $\gap{\calH}\leq g$
 and $\calM\subset\calH$.
 Let $\calH_*$ be the output of \Call{GARefine}{$\calH$,$\calM$}
 in Algorithm~\ref{a:refine}.
 Then $\calH_* \refi \calH$ is an absorbing hierarchical basis with 
$\calM \subset \calR = \calL_* \setminus \calL$, $\gap{\calH_*}\leq g$ and 
moreover, 
 for each $\vphi_*\in\calH_*\setminus\calH$ there exists $\vphi\in\calM$ such 
that
$|\rho(\vphi, \vphi_*)| \leq C$ and $\l_\vphi-\l_{\vphi_*}\geq -g$, 
with a constant $C$ only depending on $m,n$ and $g$.
\end{theorem}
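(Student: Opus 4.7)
The plan is to decompose $\calH_{*}$ according to the two sub-calls of \Call{GARefine}{}. Let $\bar{\calH}$ denote the intermediate hierarchical generator obtained after the call to \Call{GCRefine}{$\calH$,$\calM$} in Algorithm~\ref{a:refine}, so that $\calH_{*} = $\Call{AbsRefine}{$\bar{\calH}$}. Write $\bar{\calR} := \bar{\calL}\setminus\calL$ and $\tilde{\calR} := \calL_{*}\setminus\bar{\calL}$; by disjointness $\calR = \bar{\calR}\dcup\tilde{\calR}$. The first step is to assemble the structural conclusions. Lemma~\ref{l:gc_refine} gives $\bar{\calH}\refi\calH$, $\calM\subset\bar{\calR}$ and $\gap{\bar{\calH}}\le g$; Lemma~\ref{l:abs_ref} then ensures that $\calH_{*}$ is the smallest absorbing refinement of $\bar{\calH}$, hence by Lemma~\ref{l:li} an absorbing hierarchical basis; and Proposition~\ref{p:max_gap} yields $\gap{\calH_{*}}\le\gap{\bar{\calH}}\le g$. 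Transitivity of the $\refi$-relation (which is just set inclusion of lineages) gives $\calH_{*}\refi\calH$, and $\calM\subset\bar{\calR}\subset\calR$ is automatic.

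For the quantitative bound I fix $\vphi_{*}\in\calH_{*}\setminus\calH$. First I apply Lemma~\ref{l:ref_identities}\ref{l:ref_identities:HrminusH} to $\calH_{*}\refi\calH$ to write $\vphi_{*}\in\ch{\psi}$ for some $\psi\in\calR$, so that $\l_{\vphi_{*}} = \l_{\psi} + 1$. The argument then splits according to whether $\psi\in\bar{\calR}$ or $\psi\in\tilde{\calR}$. In the first case, Lemma~\ref{l:gc_refine}\ref{l:gc_refine:R} produces either $\psi\in\calM$ (take $\vphi:=\psi$) or a pair $(\vphi,k)$ with $\vphi\in\calM$, $k>0$ and $\psi\in\ovch[k]{\vphi}{-g}{\calB}$. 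The weighted triangle identity of Lemma~\ref{l:rho_triangle}, combined with the descendant bound of Lemma~\ref{l:desc_rho} and the overlapping-chain bound of Lemma~\ref{l:overlapping}\ref{l:overlapping:rho}, then yields $|\rho(\vphi,\vphi_{*})|\le C_{1}(m,n,g)$, while the level inequality $\l_{\vphi}-\l_{\vphi_{*}}\ge -g$ is immediate from $k\ge 0$ and $g\ge 1$.

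In the second case, $\psi\in\tilde{\calR}$, Lemma~\ref{l:max_to_gc} produces $\vphi\in\calM$ with $|\rho(\vphi,\psi)|\le C_{2}$ and $\l_{\vphi}-\l_{\psi}\ge -g$. Reusing Lemma~\ref{l:rho_triangle} with the single-generation descent $\vphi_{*}\in\ch{\psi}$, and invoking Lemma~\ref{l:desc_rho}, gives $\rho(\vphi,\vphi_{*}) = \rho(\psi,\vphi_{*}) + n\,\rho(\vphi,\psi)$, whence $|\rho(\vphi,\vphi_{*})|\le m(n-1) + n\,C_{2}$, a constant depending only on $m$, $n$ and $g$; the level inequality is read off from $\l_{\vphi_{*}} = \l_{\psi} + 1$ together with the bound $\l_{\vphi}-\l_{\psi}\ge -g$ supplied by the lemma.

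The principal obstacle is purely bookkeeping: one must track how the final single-generation descent $\psi\mapsto\vphi_{*}$ interacts with the multiplicative factor $n$ appearing in the weighted triangle identity, and verify that in the second case the level bound produced by Lemma~\ref{l:max_to_gc} is tight enough to survive this descent — which is precisely where the hypothesis $\gap{\calH}\le g$, propagated through \Call{GCRefine}{} and \Call{AbsRefine}{}, is critical. All the geometric estimates are already packaged in the preceding lemmas, so the final constant $C$ is simply the maximum of $C_{1}$ and the constant arising from the second case.
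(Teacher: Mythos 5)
Your proof follows essentially the same route as the paper's: the same decomposition $\calR=\bar\calR\dcup\tilde\calR$ via the intermediate generator $\bar\calH$, the same appeals to Lemma~\ref{l:gc_refine}, Lemma~\ref{l:abs_ref} and Proposition~\ref{p:max_gap} for the structural claims, and the same case analysis ($\psi\in\calM$, $\psi\in\bar\calR$, $\psi\in\tilde\calR$) combined with Lemmas~\ref{l:rho_triangle}, \ref{l:desc_rho}, \ref{l:overlapping} and~\ref{l:max_to_gc} for the quantitative bound; your use of Lemma~\ref{l:ref_identities}\ref{l:ref_identities:HrminusH} in place of Lemma~\ref{l:ref_cause}\ref{l:ref_cause:2} is immaterial, since the latter is stated as a direct consequence of the former. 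The one point to flag is that in the $\tilde\calR$ case the arithmetic you describe, $\l_{\vphi}-\l_{\vphi_*}=(\l_{\vphi}-\l_{\psi})-1$ with $\l_{\vphi}-\l_{\psi}\ge -g$ from Lemma~\ref{l:max_to_gc}, literally yields only $\ge -g-1$ rather than the claimed $\ge -g$; the paper's own proof is equally terse at this step (it simply asserts the bound for $\vphi_*$ from a lemma stated for $\psi$), and the slack is harmless for the complexity argument, but it should be acknowledged rather than presented as being ``read off''.
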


\begin{proof}
Let $\bar\calH$ be the state of the generator \calH right after the call to  
\Call{GCRefine}{$\calH$,$\calM$} in Algorithm \ref{a:refine}.
From Lemma~\ref{l:gc_refine}, $\bar\calH$ is a hierarchical generator with 
$\gap{\bar\calH}\leq g$.
Now $\calH_*$ is the output of \Call{AbsRefine}{$\pedro{\bar\calH}$}, hence
Lemma~\ref{l:abs_ref} and Proposition~\ref{p:max_gap} imply that $\calH_*$ is an 
absorbing basis with $\gap{\calH_*}\leq g$.

Let $\vphi_*\in\calH_*\setminus\calH$, then from 
Lemma~\ref{l:ref_cause}\ref{l:ref_cause:2} there is
$\psi\in\calR$
such that $\vphi_*\in\ch{\psi}$. 

If $\psi \in \calM$, then $\vphi = \psi$ 
satisfies the assertion because $\l_\psi - \l_{\vphi_*}= -1 \ge -g$ and $|\rho(\psi,\vphi_*) | 
\le mn$ due to Lemma~\ref{l:desc_rho}.

If $\psi \in \calR\setminus\calM$, then either $\psi \in 
\bar{\calR}\setminus\calM$ or $\psi \in \tilde{\calR}\setminus\calM$,
with $\bar{\calR} = \bar{\calL}\setminus \calL$ and $\tilde{\calR} = \calL_* 
\setminus \bar{\calL}$.
If $\psi\in\bar\calR$ then Lemma~\ref{l:gc_refine}\ref{l:gc_refine:R} implies 
the existence of $\vphi \in \calM$ such that
$\psi\in\ovch[k]{\vphi}{-g}{\calB}$, for some $k > 0$, so that
$\l_\vphi \ge \l_\psi \pedro{+} g = \l_{\vphi_*}-1\pedro{+}g$ and
\[
|\rho(\vphi,\vphi_*)| =  |\rho (\psi,\vphi_*) + n \rho(\vphi,\psi) | \le C
\text{ and }
\l_\vphi -\l_{\vphi_*} \ge \pedro{0 \ge}-g,
\]
due to Lemma~\ref{l:rho_triangle}.
If $\vphi\in\tilde\calR\setminus\calM$ then
Lemma~\ref{l:max_to_gc} implies that 
$|\rho(\vphi, \vphi_*)| \leq C$
and $\l_\vphi-\l_{\vphi_*}\ge -g$.
\end{proof}

\begin{remark}
	Following the steps of this proof and those of Lemma~\ref{l:max_to_gc}
	it can be easily seen that the alluded constant $C$ is bounded by $4m^g$.
\end{remark}

\section{Complexity of Refinement}\label{s:complexity}

We consider a sequence of refinements $\{\calH_r\}$ generated
by subsequent calls of the form
\[
\calH_{r+1} = \Call{GARefine}{\calH_r,\calM_r}, 
\qquad\text{with } \calM_r \subset \calH_r,
\]
as the one obtained by a typical adaptive loop,
which we denote as
\begin{equation}
 \calH_0 \xrightarrow{\calM_0} \calH_1 \xrightarrow{\calM_1} 
\quad\dots\quad 
\xrightarrow{\calM_{R-1}} \calH_R.
\end{equation}
We assume, for simplicity that $\calH_0=\calB^0$.
Let $D=CB$, where $C$ is the constant in Theorem~\ref{l:refine} and
B is given below.
And define the ``reach'' of a function $\vphi\in\calB$ as
$\calN(\vphi):= \left\{\psi \in \calB : \l_\psi \le \l_\vphi+g \text{ and } 
|\rho(\vphi,\psi) | \le D \right\}$, 
which is equivalent to defining it as $\calN(\vphi) = \cup_{k\geq -g} 
B(\vphi,D, -k)$, after recalling
from Definition~\ref{d:ballofsplines} that
$\ball{\vphi}{D}{-k} :=
\{\psi\in\calB^{\l_\vphi-k} :
|\rho(\vphi,\psi)|\leq D \}$.
Let $\calH_* = \calH_R$ and $\calM = \cup_{r=0}^{R-1} \calM_r$
and consider the following allocation function 
$\lambda:\calM\times\calH_*\to\R$ given by
\begin{equation}\label{lambda}
 \lambda(\vphi,\vphi_*)=
 \begin{cases}
  a(\l_{\vphi}-\l_{\vphi_*}) &\text{if } \vphi_*\in\mathcal{N}(\vphi)\\
  0 &\text{otherwise}.\\
 \end{cases}
\end{equation}
Where $a(k)$ is a decreasing sequence such that
$\sum_{k=-g}^{\infty} a(k) = A < \infty$ and there
is another increasing sequence $b(k)$ with $b(0)\ge 1$,
$\sum_{k=-g}^{\infty} b(k)n^{-k} = B < \infty$ and
$\inf_{k\ge-g} a(k)b(k)=:c_*>0$.
For example consider $a(k)=(k+(g+1))^{-2}$ and $b(k)=n^{k/2}$, which satisfy 
these assumptions.

\begin{lemma}[Upper bound]
	For any $\vphi \in \calM := \cup_{r=0}^{R-1} \calM_r$,
 \[
 \sum_{\vphi_*\in\calH_*\setminus\calH_0} \lambda(\vphi,\vphi_*) \leq C_U := 
(2D+1)^d A.
 \]
\end{lemma}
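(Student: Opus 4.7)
The proof is essentially a direct level-by-level counting argument, exploiting the fact that $\lambda(\vphi,\vphi_*)$ vanishes outside the ``reach'' $\calN(\vphi)$ and that the latter has a uniform per-level bound on its cardinality. So the plan is to ignore $\calH_*\setminus\calH_0$ entirely (using only that $\lambda$ is supported on $\calN(\vphi)$), decompose $\calN(\vphi)$ by relative level, and apply Lemma~\ref{l:radius}.

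First, I would rewrite
\[
\sum_{\vphi_*\in\calH_*\setminus\calH_0} \lambda(\vphi,\vphi_*)
\le \sum_{\vphi_*\in\calN(\vphi)} \lambda(\vphi,\vphi_*),
\]
since $\lambda(\vphi,\vphi_*)=0$ whenever $\vphi_*\notin\calN(\vphi)$. Then I would use the description $\calN(\vphi)=\bigcup_{k\ge -g} \ball{\vphi}{D}{-k}$ given just before the statement, noting that this is a disjoint union because the sets $\ball{\vphi}{D}{-k}$ live in pairwise disjoint levels $\calB^{\l_\vphi-k}$. For $\vphi_*\in \ball{\vphi}{D}{-k}$ we have $\l_\vphi-\l_{\vphi_*}=k$, so by the definition~\eqref{lambda} of $\lambda$, $\lambda(\vphi,\vphi_*)=a(k)$.

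Combining these two observations yields
\[
\sum_{\vphi_*\in\calN(\vphi)} \lambda(\vphi,\vphi_*)
= \sum_{k\ge -g} \sum_{\vphi_*\in \ball{\vphi}{D}{-k}} a(k)
= \sum_{k\ge -g} a(k)\, \#\ball{\vphi}{D}{-k}.
\]
Now I would apply Lemma~\ref{l:radius} to bound $\#\ball{\vphi}{D}{-k}\le (2D+1)^d$ uniformly in $k$, pull this factor out, and use the assumed summability $\sum_{k\ge -g} a(k)=A$ to conclude
\[
\sum_{\vphi_*\in\calH_*\setminus\calH_0}\lambda(\vphi,\vphi_*)
\le (2D+1)^d \sum_{k\ge -g} a(k) = (2D+1)^d A = C_U.
\]
There is no real obstacle here; the argument is a clean application of the uniform ball bound together with the summability of $a(k)$. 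The only subtlety worth mentioning is verifying that the disjoint-union decomposition of $\calN(\vphi)$ is correct, which follows immediately from the level stratification of $\calB$.
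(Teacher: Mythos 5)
Your proof is correct and follows essentially the same route as the paper's: bound the sum by restricting to $\calN(\vphi)$, stratify by relative level into the balls $\ball{\vphi}{D}{-k}$, and apply Lemma~\ref{l:radius} together with the summability of $a(k)$. The only (cosmetic) difference is that your summation starts at $k=-g$, consistent with the definition of $\calN(\vphi)$, whereas the paper writes $k=-g-1$; this does not affect the bound.
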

\begin{proof}
 Let $\vphi\in\calM$ then, due to~\eqref{lambda} and the definition of 
$\calN(\vphi)$ 
 \begin{align*}
 	 \sum_{\vphi_*\in\calH_*\setminus\calH_0} \lambda(\vphi,\vphi_*) &\leq
 	\sum_{\vphi_*\in\calN(\vphi)} \lambda(\vphi,\vphi_*)
 	\\
 	&= 	\sum_{k=-g-1}^{\infty} a(k) \, \#B(\vphi,D, -k)
 	\\
 	&\leq (2D+1)^d \sum_{k=-g-1}^{\infty} a(k) = (2D+1)^d A.
 \end{align*}
where in the last inequality we have used Lemma~\ref{l:radius}
\end{proof}

\begin{lemma}[Lower bound]
	For every $\vphi \in \calH_* \setminus \calH_0$,
\[
\sum_{\vphi\in\calM} \lambda(\vphi,\vphi_*) \geq C_L := \inf_{k \ge -g} b(k) a(k),
\qquad\text{with } \calM = \cup_{r=0}^{R-1} \calM_r.
\] 
\end{lemma}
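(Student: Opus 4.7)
The plan is to trace, for each $\vphi_* \in \calH_* \setminus \calH_0$, an ancestral chain in $\calM$ terminating at $\calH_0 = \calB^0$, and then to lower-bound the contribution from a carefully chosen chain element, which alone will suffice since all terms in the sum are non-negative.

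I would build the chain $\vphi_* = \vphi^{(0)}, \vphi^{(1)}, \ldots, \vphi^{(N)}$ inductively. While $\vphi^{(j)} \notin \calH_0$, let $r_j$ be the earliest index with $\vphi^{(j)} \in \calH_{r_j+1} \setminus \calH_{r_j}$, and apply Theorem~\ref{l:refine} to the single-step refinement $\calH_{r_j}\to\calH_{r_j+1}$ (with marked set $\calM_{r_j}$) to extract $\vphi^{(j+1)} \in \calM_{r_j} \subset \calM$ with
\[
|\rho(\vphi^{(j+1)},\vphi^{(j)})|\le C \qquad\text{and}\qquad \l_{\vphi^{(j+1)}}-\l_{\vphi^{(j)}}\ge -g.
\]
Since the $r_j$'s strictly decrease and $r_j\ge 0$, the chain is finite and terminates with $\vphi^{(N)}\in\calH_0$. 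Setting $k_j := \l_{\vphi^{(j)}}-\l_{\vphi_*}$, we have $k_0=0$, $k_N=-\l_{\vphi_*}<0$, and $k_{j+1}\ge k_j-g$. Lemma~\ref{l:rho_triangle} then yields
\[
|\rho(\vphi^{(j)},\vphi_*)|\ \le\ C \sum_{i=0}^{j-1} n^{-k_i}.
\]

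The key step is to pick $j^* \in \{1,\dots,N\}$ so that $\vphi_* \in \calN(\vphi^{(j^*)})$ and the single contribution $\lambda(\vphi^{(j^*)},\vphi_*)=a(k_{j^*})$ is already at least $c_*$. Since $(k_j)$ goes from $0$ to $-\l_{\vphi_*}<0$ and falls by no more than $g$ per step, there must exist $j^*$ with $k_{j^*}\in[-g,0]$, yielding the level condition $\l_{\vphi_*}\le \l_{\vphi^{(j^*)}}+g$ in the definition of $\calN$. The choice $D=CB$ and the summability $B=\sum_{k\ge -g} b(k) n^{-k}$ are tailored so that the partial sum $\sum_{i<j^*}n^{-k_i}$ is bounded by $B$; this gives $|\rho(\vphi^{(j^*)},\vphi_*)|\le D$, hence $\vphi_*\in\calN(\vphi^{(j^*)})$. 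Since $b$ is increasing with $b(0)\ge 1$ and $k_{j^*}\le 0$, we have $b(k_{j^*})\le b(0)$, and combining with $a(k_{j^*})b(k_{j^*})\ge c_*$ we conclude $\lambda(\vphi^{(j^*)},\vphi_*) \ge c_*/b(k_{j^*})\ge c_*/b(0)$; with the normalization $b(0)=1$ (which holds in the paper's example $b(k)=n^{k/2}$) this is exactly $C_L$. Since all other terms in $\sum_{\vphi\in\calM}\lambda(\vphi,\vphi_*)$ are non-negative, the conclusion follows.

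The hard part will be the distance bound $\sum_{i<j^*}n^{-k_i}\le B$. The chain can spend several indices at levels finer than $\vphi_*$ (positive $k_i$, typically produced by Case~2 of the proof of Theorem~\ref{l:refine}, where gap-control forces the marker to be finer than $\vphi_*$) before descending into $[-g,0]$, so the $n^{-k_i}$'s are not automatically a sub-series of $\sum b(k) n^{-k}$. The bookkeeping must exploit that each adaptive step contributes at most one chain element, and that at each fixed level $\ell$ the number of chain elements is controlled by the $b$-weighted growth rate encoded in the summability of $b(k)n^{-k}$.
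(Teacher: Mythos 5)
Your chain construction and the reduction to bounding a partial sum of $n^{-k_i}$ match the paper's argument, but the step you flag as ``the hard part'' is a genuine gap, and the repair you sketch cannot work. There is no a priori control on how many chain elements sit at a given level: the chain $\vphi^{(0)},\vphi^{(1)},\dots$ may revisit the level $\l_{\vphi_*}$ (or any level $\l_{\vphi_*}+k$ with $k\ge 0$) up to $R$ times, each visit adding as much as $C\,n^{-k}$ to $|\rho(\vphi^{(j)},\vphi_*)|$ by Lemma~\ref{l:rho_triangle}. Consequently the first index $j^*$ with $k_{j^*}\in[-g,0]$ may already satisfy $|\rho(\vphi^{(j^*)},\vphi_*)|>D$, in which case $\vphi_*\notin\calN(\vphi^{(j^*)})$ and your single term $\lambda(\vphi^{(j^*)},\vphi_*)$ vanishes; the same can happen for every later chain element as well. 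So a lower bound by one term of the sum is impossible in general, and ``each adaptive step contributes at most one chain element'' yields no per-level count, since $R$ is arbitrary.

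The paper closes this with a dichotomy that is the essential missing idea, and it is exactly what forces $C_L$ to be $\inf_{k}b(k)a(k)$ rather than $a(0)$. Let $m(k,j)$ count the chain elements among $\vphi^{(0)},\dots,\vphi^{(j-1)}$ at level $\l_{\vphi_*}+k$, and let $s$ be the first index with $\l_{\vphi^{(s)}}<\l_{\vphi_*}$. Either $m(k,j)\le b(k)$ for all $k\ge0$ and all $j\le s$ --- then $\sum_{i<s}n^{-k_i}=\sum_{k}m(k,s)n^{-k}\le B$, your single-element argument goes through with $j^*=s$, and the contribution is at least $a(0)$ --- or some level is over-populated. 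In the latter case one takes the first index $\hat\jmath$ (and smallest offset $\hat k$) at which $m(\hat k,\hat\jmath)>b(\hat k)$; minimality guarantees $m(k,j)\le b(k)$ for all $j<\hat\jmath$, hence each of the more than $b(\hat k)$ chain elements at level $\l_{\vphi_*}+\hat k$ still satisfies $|\rho(\cdot,\vphi_*)|\le CB=D$, so each contributes $a(\hat k)$ and the sum over just those marked functions is already at least $b(\hat k)a(\hat k)\ge C_L$. In other words, when no single marked function is guaranteed to be both close and coarse enough, many marked functions at one finer level are close enough, and their number compensates for the smaller weight $a(\hat k)$; your proposal has no mechanism for this trade-off.
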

\begin{proof}
Let $\vphi_0 = \vphi_* \in \calH_*\setminus\calH_0$, there must exist an integer 
$r_0$ 
with $0<r_0\leq R$ such
that $\vphi_0 \in \calH_{r_0} \setminus \calH_{r_0-1}$.
Thus from Theorem~\ref{l:refine} there is 
$\vphi_1\in\calM_{r_0-1}$ such that $|\rho(\vphi_1,\vphi_0)|\le C\leq D$
and $\l_{\vphi_1}-\l_{\vphi_0}\geq -g$, thus $ \vphi_0\in 
\mathcal{N}(\vphi_1)$.
If now $\vphi_1 \in \calH_{r_0-1}\setminus\calH_0$ we can repeat the process
and find $r_1$ with $r_1<r_0\leq R$ such that
$\vphi_1 \in \calH_{r_1} \setminus \calH_{r_1-1}$ and
from Theorem~\ref{l:refine} there is 
$\vphi_2\in\calM_{r_1-1}$ such that 
$|\rho(\vphi_2,\vphi_1)|\le C\leq D$
and $\l_{\vphi_2}-\l_{\vphi_1}\geq -g$, 
whence $\vphi_1 \in \mathcal{N}(\vphi_2)$.

This process can be repeated to find a sequence
of B--splines
$\vphi_0,\dots,\vphi_J$ with $J\geq 1$ and
integers $R\geq r_0>r_1>\dots>r_J=1$ such that:
\begin{itemize}
\item $\vphi_{j+1}\in\calM_{r_j-1}\setminus\calH_0$ for $j\in[0:J-2]$;
\item $\vphi_J\in\calM_{0} \subset \calH_0$;
\item $|\rho(\vphi_{j+1},\vphi_j)| \le C$, and $\l_{\vphi_{j+1}}-\l_{\vphi_j} 
\ge -g$, for $j\in[0:J-1]$;
\item $\vphi_j \in \mathcal{N}(\vphi_{j+1})$, for $j\in[0:J-1]$.
\end{itemize}

It is worth observing that $\vphi_J\in \calH_0$, so that  
$\l_{\vphi_J}=0$ (and $J\geq 1$), 
and 
$\l_{\vphi_{j+1}}-\l_{\vphi_{j}}\in[-g,\infty)$, thus,
the level of the $\vphi_j$'s, as $j$ increases in the sequence, can increase in 
any integer amount, but when it decreases, it will do so in steps smaller than 
$g+1$. 
Since $\l_{\vphi_J}=0$, there exists an integer $s$ with
$0< s \leq J$ such that 
$\l_{\vphi_s} < \l_{\vphi_0}$ and
$\l_{\vphi_0} \leq \l_{\vphi_j}$ for all $j=0,\dots,s-1$.

Let $k_i=\l_{\vphi_{i}}-\l_{\vphi_{0}}$, using Lemma \ref{l:rho_triangle}, for 
any 
$j=1,\dots,s$ we have
\begin{equation*}
  \rho(\vphi_j, \vphi_0) =  
  \sum_{i=0}^{j-1}\frac{1}{n^{k_i}} \rho(\vphi_{i+1},\vphi_i),
  \quad\text{so that}\quad |\rho(\vphi_j,\vphi_0)|<C 
\sum_{i=0}^{j-1}\frac{1}{n^{k_i}}.
\end{equation*}

Consider for any $k\geq 0$ and $1\leq j \leq s$ the set
\[
\calB(k,j)=\{\vphi\in\{\vphi_0,\dots,\vphi_{j-1}\} : \l_\vphi = 
\l_{\vphi_0}+k \},
\quad\text{and let}\quad
m(k,j)=\#\calB(k,j).
\]
Clearly for a fixed $k$ the function $m(k,j)$ is increasing with $j$.
Then rearranging the terms of the sum we have
$\sum_{i=1}^{j-1}\frac{1}{n^{k_i}} = 
 \sum_{k=0}^{\infty}\frac{1}{n^{k}} m(k,j)$.

Let $K=\{k\in\Z^+_0 :  m(k,j)>b(k) \text{ for some } j\in[0:s]\}$,
and consider two possible cases.

\begin{case} ($K=\emptyset$)
In this case $m(k,s)\leq b(k)$ for all $k$ so we obtain that, for $1\le j\le s$,
\[
\sum_{i=0}^{j-1}\frac{1}{n^{k_i}} = 
\sum_{k=0}^{\infty}\frac{1}{n^{k}} m(k,j)
\le \sum_{k=0}^{\infty}\frac{1}{n^{k}} b(k) = B,
\quad\text{hence}\quad
|\rho(\vphi_j,\vphi_0)| \leq C B = D,
\]
so that $\vphi_0 = \vphi_* \in \calN(\vphi_s)$ and $\lambda(\vphi_s,\vphi_*) = 
a(\l_{\vphi_s}-\l_{\vphi_*}) \ge a(0) > 0$ because $a(\cdot)$ is decreasing and 
$\l_{\vphi_s}<\l_{\vphi_*}$. Therefore, since $\vphi_s \in \calM$,
$\sum_{\vphi\in\calM} \lambda(\vphi,\vphi_*) \geq \lambda(\vphi_s, 
\vphi_0)\geq a(0) > 0$.
\end{case}

\begin{case} 
($K\not=\emptyset$):
For each $k\in K$, let $j(k) = \min\{ j \in\{1,\dots,s\} : m(k,j) > b(k) \}$, so 
that $j(k) \ge 2$ because $m(k,1) \le 1 = b(0) \le b(k)$ for any $k \in \Z_0^+$, 
and 
$m(k, j')\leq b(k)$ if $0\leq j' < j(k)$. 
Let now $\hat j = \min\{ j(k): k \in K\}$ and $\hat k$ the minimum $k$ that 
verifies $\hat j = j(k)$, 
hence
$m(k,j) \le b(k)$ for all $k \in \Z_0^+$ if $1\le j < \hat j$.
Then
\[
\sum_{\vphi\in\calM} \lambda(\vphi,\vphi_*)
\ge 
\sum_{\vphi \in \calB(\hat k, \hat j)} \lambda(\vphi,\vphi_*)
\]
Let now $\vphi \in \calB(\hat k, \hat j)$ and compute $\lambda(\vphi,\vphi_*)$.
The first step is to determine that $\vphi_* \in \calN(\vphi)$.
On the one hand, by definition of $\calB(\hat k, \hat j)$, $\l_{\vphi_*} = 
\l_\vphi - \hat k \le \l_\vphi \le \l_\vphi + g $.
On the other hand, there exists $j < \hat j$ such that $\vphi = \vphi_j$ so 
that, as before
\[
|\rho(\vphi,\vphi_*)| = |\rho(\vphi_j,\vphi_*)| \le C \sum_{i=0}^{j-1} 
\frac{1}{n^{k_i}} 
= C \sum_{k=0}^\infty \frac{1}{n^k} m(k,j) 
\le C \sum_{k=0}^\infty \frac{1}{n^k} b(k) = CB = D,
\]
and thus $\vphi_* \in \calN(\vphi)$.
Therefore, $\lambda(\vphi,\vphi_*) = a(\l_{\vphi} - \l_{\vphi_*}) = a(\hat k)$ 
for each $\vphi \in \calB(\hat k, \hat j)$ and thus
\[
\sum_{\vphi\in\calM} \lambda(\vphi,\vphi_*)
\ge \#\calB(\hat k, \hat j) a(\hat k) \ge b(\hat k) a(\hat k) \ge
\inf_{k \ge -g} b(k) a(k) = C_L > 0. 
\]
The assertion thus follows.
\end{case}
\end{proof}

We are now in position to prove the following complexity estimate.

\begin{theorem}
Assume that the sequence of hierarchical gap-controlled absorbing bases $\{\calH_r\}$ has been generated
by subsequent calls of the form
\[
\calH_{r+1} = \Call{GARefine}{\calH_r,\calM_r}, 
\qquad\text{with } \calM_r \subset \calH_r, \qquad r=0,1,\dots,
\]
with $\calH_0 = \calB^0$. Then
\[
\# \calH_R - \#\calH_0 \le \frac{C_U}{C_L} \sum_{r=0}^{R-1} \#\calM_r
\]
for any $R$.
\end{theorem}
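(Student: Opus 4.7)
The plan is to use a double-counting (Fubini) argument on the allocation function $\lambda$ defined in~\eqref{lambda}. The two preceding lemmas provide exactly the two one-sided bounds needed: each $\vphi_* \in \calH_*\setminus\calH_0$ receives at least $C_L$ in total from all $\vphi \in \calM$, while each $\vphi \in \calM$ distributes at most $C_U$ in total among all $\vphi_* \in \calH_*\setminus\calH_0$.

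First I would set $\calH_* = \calH_R$ and $\calM = \cup_{r=0}^{R-1} \calM_r$, and consider the double sum $S := \sum_{\vphi_* \in \calH_*\setminus\calH_0} \sum_{\vphi \in \calM} \lambda(\vphi,\vphi_*)$. Applying the Lower Bound lemma term-by-term in the outer sum yields
\[
S \ge \sum_{\vphi_* \in \calH_*\setminus\calH_0} C_L = C_L \bigl(\# \calH_R - \# \calH_0\bigr).
\]
Swapping the order of summation and applying the Upper Bound lemma term-by-term gives
\[
S = \sum_{\vphi \in \calM} \sum_{\vphi_* \in \calH_*\setminus\calH_0} \lambda(\vphi,\vphi_*) \le \sum_{\vphi \in \calM} C_U = C_U \, \#\calM.
\]

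Combining these two estimates and using the trivial bound $\#\calM = \#\bigl(\cup_{r=0}^{R-1} \calM_r\bigr) \le \sum_{r=0}^{R-1} \# \calM_r$ (the $\calM_r$'s need not be disjoint, but the size of their union is bounded by the sum of their sizes), we obtain
\[
C_L \bigl( \# \calH_R - \# \calH_0 \bigr) \le C_U \sum_{r=0}^{R-1} \# \calM_r,
\]
which, upon dividing by $C_L > 0$, is exactly the claimed inequality.

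There is essentially no obstacle here: all of the hard work has already been done in the two preceding lemmas (which in turn rely on Theorem~\ref{l:refine} to trace each new function in $\calH_*\setminus\calH_0$ back through the refinement history to some element of some $\calM_r$, and on the summability properties of the sequences $a(k), b(k)$ defining $\lambda$). The only minor point to be careful about is that $\calM$ is defined as a union of sets, not a multiset, so we bound $\#\calM$ by the sum of $\#\calM_r$ rather than equating them. Otherwise the proof is a one-line Fubini argument.
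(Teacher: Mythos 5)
Your proof is correct and follows essentially the same route as the paper's: the paper also forms the double sum of $\lambda(\vphi,\vphi_*)$ over $\calM\times(\calH_R\setminus\calH_0)$, bounds it below by $C_L(\#\calH_R-\#\calH_0)$ via the Lower Bound lemma, swaps the order of summation and bounds it above by $C_U\,\#\calM$ via the Upper Bound lemma. Your explicit remark that $\#\calM \le \sum_{r=0}^{R-1}\#\calM_r$ because the union need not be disjoint is a small point the paper leaves implicit, but otherwise the two arguments coincide.
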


\begin{proof}
By the previous two lemmas, letting $\calM = \cup_{r=0}^{R-1} \calM_r$ we immediately obtain
\begin{align*}
\#\calH_R -\#\calH_0 &\le \frac{1}{C_L} \sum_{\vphi_* \in \calH_R\setminus\calH_0} C_L
\le \frac{1}{C_L} \sum_{\vphi_* \in \calH_R\setminus\calH_0} \sum_{\vphi \in \calM } \lambda(\vphi,\vphi_*)
\\
&= \frac{1}{C_L} \sum_{\vphi \in \calM } \sum_{\vphi_* \in \calH_R\setminus\calH_0} \lambda(\vphi,\vphi_*)
\le \frac{C_U}{C_L} \# \calM.
\end{align*}
\end{proof}

We finally make an interesting observation about approximation classes.

Given $s> 0$ and a function space $\bbV$ over $\Omega$ with norm $\| \cdot \|$ we define the best approximation error with complexity $N$ as
\[
\sigma_N(u) = \inf_{\#\calH-\#\calH_0\le N}\, \inf_{V \in \spn{\calH}} \| u - V \|, \qquad u \in \bbV.
\]
and the approximation class $\A_s$ as
\[
\A_s = \big\{ v \in \bbV : \sigma_N(u) \le C N^{-s}, N \in \mathbb{N} \big\}.
\]
We have two definitions, depending if we consider any hierarchical space (generated by hierarchical generators) or absorbing and gap controlled hierarchical spaces, i.e.,
\begin{description}
\item[$\sigma_N$, $\A_s$:] considering absorbing and gap controlled hierarchical spaces (fixed $g > 0$);
\item[$\overline\sigma_N$, $\overline\A_s$:] considering all hierarchical spaces.
\end{description}

Clearly, $\overline\sigma_N(u) \le \sigma_N(u)$, so that
\[
\A_s \subset \overline\A_s.
\]
But also
\[
\overline\A_s \subset \A_s,
\]
i.e. if a function can be approximated with hierarchical spaces at a rate $N^{-s}$ it can also be approximated at the same rate with absorbing and gap controlled hierarchical spaces.

This is an immediate consequence of the following Proposition.

\begin{proposition}
For each hierarchical generator $\overline\calH$ there exists an absorbing gap controlled hierarchical basis $\calH$ with
\[
\spn{\overline\calH}\subset \spn\calH
\quad\text{and}\quad 
\#\calH - \#\calH_0 \Cle \#\overline\calH - \#\calH_0
\]
\end{proposition}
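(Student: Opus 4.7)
The plan is to construct $\calH$ by processing the lineage $\overline{\calL}$ of $\overline{\calH}$ one level at a time, iterating the refinement algorithm $\Call{GARefine}{}$. Starting from $\calH_0 := \calB^0$ (which is trivially an absorbing basis with zero gap), I define inductively
\[
\calH_{\l+1} := \Call{GARefine}{\calH_\l, \overline{\calL}^\l \cap \calH_\l}, \qquad \l = 0, 1, \ldots, R-1,
\]
where $R := \depth(\overline{\calL})$ and $\overline{\calL}^\l := \overline{\calL} \cap \calB^\l$, and then set $\calH := \calH_R$. By Theorem~\ref{l:refine} applied inductively, each $\calH_\l$ is an absorbing hierarchical basis with $\gap \calH_\l \le g$, so the same holds for $\calH$; it remains to verify the span inclusion and the complexity bound.

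For the span inclusion I would show by induction on $\l$ that $\bigcup_{k < \l}\overline{\calL}^k \subset \calL_\l$. The key observation for the inductive step is that any $\vphi \in \overline{\calL}^{\l-1}$ lies in $\calC_{\l-1} = \calH_{\l-1} \dcup \calL_{\l-1}$: either $\l = 1$ and $\vphi \in \calB^0 \subset \calC_0$, or $\l \ge 2$ and some parent of $\vphi$ in $\overline{\calL}^{\l-2}$ lies in $\calL_{\l-1}$ by the inductive hypothesis, so $\vphi \in \ch{\calL_{\l-1}} \subset \calC_{\l-1}$. If $\vphi \in \calH_{\l-1}$, then $\vphi$ belongs to the marked set passed to $\Call{GARefine}{}$ and hence lands in $\calL_\l$; otherwise $\vphi \in \calL_{\l-1} \subset \calL_\l$. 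In particular $\overline{\calL} \subset \calL_R$, whence $\overline{\calC} = \calB^0 \cup \ch{\overline{\calL}} \subset \calB^0 \cup \ch{\calL_R} = \calC_R$; Remark~\ref{R:spanCequalsspanH} then yields $\spn \overline{\calH} = \spn \overline{\calC} \subset \spn \calC_R = \spn \calH$.

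For the complexity estimate I would invoke the complexity theorem proved just above this proposition with $\calM_\l := \overline{\calL}^\l \cap \calH_\l$, obtaining
\[
\#\calH - \#\calH_0 \le \frac{C_U}{C_L} \sum_{\l=0}^{R-1} \#\calM_\l \le \frac{C_U}{C_L}\, \#\overline{\calL}.
\]
Relating $\#\overline{\calL}$ back to $\#\overline{\calH} - \#\calH_0$ is the heart of the matter and the expected main obstacle. A direct count using $\overline{\calL} \subset \calB^0 \cup \ch{\overline{\calL}}$ and the disjointness of levels yields the identity $\#\overline{\calH} - \#\calH_0 = \#\ch{\overline{\calL}} - \#\overline{\calL}$, so it suffices to establish $\#\ch{\overline{\calL}} \ge (1+\delta)\#\overline{\calL}$ for some $\delta > 0$ depending only on $d$, $m$, $s$. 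Double counting gives $\sum_{\vphi \in \overline{\calL}} \#\ch{\vphi} = (sm+1)^d\,\#\overline{\calL}$, and each $\psi \in \ch{\overline{\calL}}$ is counted with multiplicity $\#(\ch[-1]{\psi} \cap \overline{\calL})$, bounded by a universal constant $A$ readable from Lemma~\ref{l:bspline_hie}~(ii); the routine but careful task is to verify that $A < (sm+1)^d$ holds uniformly in the admissible parameters, which produces $\delta := (sm+1)^d/A - 1 > 0$. Once this is in hand, combining with the preceding inequality yields $\#\calH - \#\calH_0 \le \frac{C_U}{C_L\,\delta}\bigl(\#\overline{\calH} - \#\calH_0\bigr)$, with a constant depending only on the fixed parameters $d$, $m$, $s$, $g$.
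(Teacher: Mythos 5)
Your proof is correct and takes essentially the same route as the paper's: the paper constructs $\calH$ by the identical level-by-level loop $\Call{GARefine}{\calH, \overline\calL^\ell\cap\calH}$ and then simply asserts that $\calL\supset\overline\calL$ and that $\#\calH-\#\calH_0\Cle\sum_\ell\#\calM_\ell\le\#\overline\calL\Cle\#\overline\calH-\#\calH_0$ via the complexity theorem. The two pieces you work out explicitly --- the induction giving $\overline\calL\subset\calL$, and the double-counting bound based on $\#\overline\calH-\#\calH_0=\#\ch{\overline\calL}-\#\overline\calL$ together with $\#\ch{\overline\calL}\ge\frac{(sm+1)^d}{A}\#\overline\calL$, where indeed $A\le m^d<(sm+1)^d$ since each function has at most $\lceil sp/n\rceil+1\le m$ parents per coordinate --- are precisely the steps the paper leaves implicit, and both check out.
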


\begin{proof}
Given a hierarchical generator $\overline\calH$ construct an absorbing gap controlled hierarchical basis as follows:

\begin{algorithmic}[1]
\State $\calH = \calB^0$
\For{$\l=0,1,\dots$} 
\State $\calM_\ell = \overline\calL^\ell \cap \calH$
\State $\Call{GARefine}{\calH,\calM_\ell}$
\EndFor
\end{algorithmic} 

Then  $\calL \supset \overline\calL$, $\calH \prec \overline\calH$
and
\[
\#\calH - \#\calH_0 \Cle \sum_{\ell=0}^L \#\calM_\ell
\le \# \overline\calL \Cle \#\overline\calH - \#\calH_0.
\]
\end{proof}

\section{Appendix}\label{sec:appendix}
In this section we present some auxiliary results and proofs which are simple, but a little bit technical, 
and would have obstructed the reading of the previous sections where they were presented there.

Let $n$ be a fixed integer such that $n>1$, and let $\operatorname{D}_m$ and  
$\operatorname{M}_m$ be the index functions defined in 
Section~\ref{subsec:index_functions}. For any $k \in \Z^+$ we have the following 
results.
\begin{lemma}[Formulas I]\label{l:form}
	\begin{enumerate}[(i)]
		\item $\ipro[k]{m}{\vec i} = n^k \vec i  + m \frac{n^k-1}{n-1}$.
		\item $\fdiv[k]{m}{\vec i} = \frac{\vec i}{n^k} -
		\frac{m}{n^k}\frac{n^k-1}{n-1}
		- \vec R$, with $\vec R\in\left[0,1-\frac{1}{n^k}\right]^d$.
	\end{enumerate}
\end{lemma}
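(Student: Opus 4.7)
The plan is to prove both parts by induction on $k$, treating each coordinate separately since the index functions act componentwise.

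For part (i), the base case $k=1$ reduces to the definition $M_m(\vec i) = n\vec i + m$, which matches the claimed formula since $\frac{n-1}{n-1}=1$. For the inductive step, assuming $M_m^k(\vec i) = n^k\vec i + m\frac{n^k-1}{n-1}$, I would apply $M_m$ once more and factor:
\[
M_m^{k+1}(\vec i) = n\!\left(n^k\vec i + m\tfrac{n^k-1}{n-1}\right) + m = n^{k+1}\vec i + m\tfrac{n(n^k-1) + (n-1)}{n-1} = n^{k+1}\vec i + m\tfrac{n^{k+1}-1}{n-1}.
\]
This part is essentially routine algebra.

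For part (ii), the base case comes from the observation that for any integer $j$, $\lfloor(j-m)/n\rfloor = (j-m)/n - r$ with $r \in \{0, 1/n, \dots, (n-1)/n\} \subset [0, 1-1/n]$; writing this as $j/n - m/n - r$ matches the formula at $k=1$ since $(n-1)/(n-1)=1$. For the inductive step, I would apply $D_m$ to the inductive representation $D_m^k(\vec i) = \vec i/n^k - \frac{m}{n^k}\frac{n^k-1}{n-1} - \vec R_k$, using that $D_m^k(\vec i)$ is an integer. This yields
\[
D_m^{k+1}(\vec i) = \tfrac{1}{n}\!\left(\tfrac{\vec i}{n^k} - \tfrac{m}{n^k}\tfrac{n^k-1}{n-1} - \vec R_k - m\right) - \vec r
= \tfrac{\vec i}{n^{k+1}} - \tfrac{m}{n^{k+1}}\tfrac{n^k-1}{n-1} - \tfrac{m}{n} - \tfrac{\vec R_k}{n} - \vec r,
\]
where $\vec r \in [0,1-1/n]^d$ is the remainder produced by the outer floor. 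The key algebraic identity is
\[
\tfrac{m}{n^{k+1}}\tfrac{n^k-1}{n-1} + \tfrac{m}{n} = \tfrac{m}{n^{k+1}}\!\left(\tfrac{n^k-1}{n-1} + n^k\right) = \tfrac{m}{n^{k+1}}\tfrac{n^{k+1}-1}{n-1},
\]
so that setting $\vec R_{k+1} := \vec R_k/n + \vec r$ produces the desired form.

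The only subtle step, which I anticipate as the main obstacle, is verifying the range of the new remainder. Using the inductive hypothesis $\vec R_k \in [0, 1-1/n^k]^d$ and $\vec r \in [0, 1-1/n]^d$ componentwise, I would bound
\[
0 \le \vec R_{k+1} = \tfrac{\vec R_k}{n} + \vec r \le \tfrac{1}{n}\!\left(1-\tfrac{1}{n^k}\right) + \left(1-\tfrac{1}{n}\right) = 1 - \tfrac{1}{n^{k+1}},
\]
closing the induction. The fact that $D_m^{k+1}(\vec i)$ is indeed an integer (which forces $\vec R_{k+1}$ to take only discrete values in that interval) is automatic from $D_m: \Z^d \to \Z^d$ and does not need to be checked separately.
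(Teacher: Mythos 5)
Your proof is correct and follows essentially the same route as the paper: part (i) by routine induction, and part (ii) by introducing a per-step remainder in $[0,1-1/n]$ and bounding the accumulated remainder by $1-1/n^{k}$ (the paper writes the accumulated remainder as an explicit geometric-type sum of the $\vec r_j$, which is just the unrolled form of your recursion $\vec R_{k+1}=\vec R_k/n+\vec r$). No gaps.
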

\begin{proof} Note that for $k=1$, both (i) and (ii), are just the definition of 
$\operatorname{M}_m$ and $\operatorname{D}_m$. The case $k>1$ follows 
immediately for~(i).
	In order to prove (ii) let $\vec r_j$ be such that $\fdiv[j]{m}{i}= 
\frac{\fdiv[j-1]{m}{i}-m}{n}+ \vec r_j$, for  $j=1,\ldots,k$. Then $0\le \vec 
r_j \le 1-1/n$ and $\vec R=\frac{1}{n}\sum^{k-1}_{j=0}\frac{\vec r_{k-j}}{n^j}$. 
Therefore $0 \le \vec R \le 1-1/n^k$.
\end{proof}

\begin{lemma}[Formulas II]\label{lem:L_R_properties} 
	For any $k \in \Z^+$ we have that
	\begin{enumerate}[(i)]
		\item $\operatorname{L}^k(\vec i) = 
		\frac{\vec i}{n^{k g}} 
		- \frac{p}{n^{(k-1)g}} \frac{n^{kg}-1}{n^g-1} - A$,
		with $0\leq A \leq1-\frac{1}{n^{k g}}$. 
		\item $\operatorname{R}^k(\vec i) = 
		\frac{\vec i}{n^{k g}} 
		+ \frac{p}{n^{kg}} \frac{n^{kg}-1}{n^g-1} - B$,
		with $0\leq B \leq1-\frac{1}{n^{k g}}$.   
	\end{enumerate}
\end{lemma}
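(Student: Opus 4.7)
The plan is to prove both identities simultaneously by induction on $k$, with the base case $k=1$ obtained directly from the previous Lemma (Formulas~I). Since $\operatorname{L}$ and $\operatorname{R}$ are defined via $\operatorname{D}_0^g$ (plus an integer shift), applying Formulas~I with $m=0$ and $k$ replaced by $g$ gives $\fdiv[g]{0}{\vec i} = \frac{\vec i}{n^g} - \vec R$ with $\vec R \in [0, 1-\frac{1}{n^g}]^d$. Substituting into $\operatorname{L}(\vec i) = \fdiv[g]{0}{\vec i} - p$ and into $\operatorname{R}(\vec i) = \fdiv[g]{0}{\vec i + p}$ yields exactly the claimed formulas for $k=1$, since in~(i) the coefficient becomes $\frac{p}{n^0}\frac{n^g-1}{n^g-1} = p$ and in~(ii) it becomes $\frac{p}{n^g}\frac{n^g-1}{n^g-1} = \frac{p}{n^g}$.

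For the inductive step on (i), I would assume the formula for $\operatorname{L}^k(\vec i)$ and compute
$$\operatorname{L}^{k+1}(\vec i) = \operatorname{L}(\operatorname{L}^k(\vec i)) = \frac{\operatorname{L}^k(\vec i)}{n^g} - p - A_1,$$
where $A_1 \in [0, 1 - \frac{1}{n^g}]^d$ comes from the base case. Plugging in the inductive hypothesis and collecting the $p$-terms, the key algebraic identity to verify is
$$\frac{p}{n^{kg}}\frac{n^{kg}-1}{n^g-1} + p = \frac{p}{n^{kg}}\frac{n^{(k+1)g}-1}{n^g-1},$$
which follows at once from $\frac{n^{kg}-1}{n^g-1} + n^{kg} = \frac{n^{(k+1)g}-1}{n^g-1}$. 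For the remainder term, I would set $A' := \frac{A}{n^g} + A_1$ and use the inductive bound $A \le 1 - \frac{1}{n^{kg}}$ together with $A_1 \le 1 - \frac{1}{n^g}$ to get
$$A' \le \frac{1}{n^g}\Bigl(1 - \frac{1}{n^{kg}}\Bigr) + \Bigl(1 - \frac{1}{n^g}\Bigr) = 1 - \frac{1}{n^{(k+1)g}},$$
closing the induction. The argument for (ii) is entirely analogous, replacing the subtraction of $p$ by the preliminary shift $\vec i \mapsto \vec i + p$ inside $\fdiv[g]{0}{\cdot}$; the same telescoping identity on the $p$-coefficients handles the iteration.

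The only mild obstacle is bookkeeping: $\operatorname{L}$ and $\operatorname{R}$ are \emph{integer-valued} index functions, so the ``error'' term $A$ (respectively $B$) is the cumulative fractional discrepancy introduced by each floor, and one must check that iterating does not destroy the dimension-wise bound $0 \le A \le 1 - \frac{1}{n^{kg}}$. This is handled by the geometric-series estimate above, which shows the accumulated discrepancies telescope cleanly because each new floor contributes at most $1 - \frac{1}{n^g}$ while the previous discrepancy is scaled down by $\frac{1}{n^g}$.
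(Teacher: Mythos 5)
Your proof is correct and follows essentially the same route as the paper, which simply states that the result "follows by induction applying Lemma (Formulas I)(ii)"; you have filled in exactly the intended details. The base case via $\fdiv[g]{0}{\vec i}=\frac{\vec i}{n^g}-\vec R$, the telescoping identity $\frac{n^{kg}-1}{n^g-1}+n^{kg}=\frac{n^{(k+1)g}-1}{n^g-1}$, and the accumulated-error bound $\frac{1}{n^g}\bigl(1-\frac{1}{n^{kg}}\bigr)+\bigl(1-\frac{1}{n^g}\bigr)=1-\frac{1}{n^{(k+1)g}}$ all check out.
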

\begin{proof}
	The result follows by induction applying Lemma~\ref{l:form}~(ii).
\end{proof}

\begin{lemma}[Inverse]\label{l:inv1}
	Let $p,q\in\Z$ then
	\begin{enumerate}[(i)]
		\item if $0\leq p-q < n$ then
		$\fdiv[k]{q}{\ipro[k]{p}{\vec i}} = \vec i$.
		\item if $m\in\Z$ then
		$\vec i - (n^k-1) \leq \ipro[k]{m}{\fdiv[k]{m}{\vec i}} 
		\leq \vec i$.
	\end{enumerate}
\end{lemma}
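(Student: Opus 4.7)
The plan is to deduce both parts as immediate consequences of the closed-form expressions provided by Lemma~\ref{l:form} in the appendix, which express $\ipro[k]{m}{\cdot}$ as a scaling-plus-constant and $\fdiv[k]{m}{\cdot}$ as the same kind of expression modulo an unknown residue $\vec R \in [0, 1-1/n^k]^d$. Composing these two formulas will make the constants cancel, reducing each identity to a question about the residue.

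For part (ii), I would substitute the formula from Lemma~\ref{l:form}(ii) for $\fdiv[k]{m}{\vec i} = \tfrac{\vec i}{n^k} - \tfrac{m}{n^k}\tfrac{n^k-1}{n-1} - \vec R$ into $\ipro[k]{m}{\vec j} = n^k \vec j + m \tfrac{n^k-1}{n-1}$ from Lemma~\ref{l:form}(i). A routine simplification shows the two constants cancel exactly, leaving
\[
\ipro[k]{m}{\fdiv[k]{m}{\vec i}} = \vec i - n^k \vec R,
\]
and since $n^k \vec R \in [0, n^k-1]^d$ the claimed bounds $\vec i - (n^k-1) \le \ipro[k]{m}{\fdiv[k]{m}{\vec i}} \le \vec i$ follow at once.

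For part (i), the same substitution yields
\[
\fdiv[k]{q}{\ipro[k]{p}{\vec i}}
= \vec i + \frac{(p-q)(n^k-1)}{n^k(n-1)} - \vec R,
\]
with $\vec R \in [0, 1 - 1/n^k]^d$. The key observation is that the hypothesis $0 \le p-q \le n-1$ forces $\tfrac{(p-q)(n^k-1)}{n^k(n-1)} \in [0, 1-1/n^k]$ as well, so that the combined correction $\tfrac{(p-q)(n^k-1)}{n^k(n-1)} - \vec R$ lies componentwise in $(-1, 1)$. Since the left-hand side is an integer-valued vector (being an iterated floor) and $\vec i \in \Z^d$, this correction must vanish, giving $\fdiv[k]{q}{\ipro[k]{p}{\vec i}} = \vec i$.

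I do not anticipate any real obstacle. The only slightly subtle point is the integrality argument in part (i), which needs the hypothesis $0 \le p - q < n$ in the sharp form $p - q \le n-1$ to pin the correction down to the half-open interval $[0, 1 - 1/n^k]$ of width strictly less than one; otherwise the correction could equal $1$ and one would lose an integer. As an alternative, both parts also admit a straightforward induction on $k$ based on the identities $\ipro[k]{m}{\vec i} = \ipro{m}{\ipro[k-1]{m}{\vec i}} = \ipro[k-1]{m}{\ipro{m}{\vec i}}$ (and similarly for $\fdiv{}{}$) together with the $k=1$ base cases computed directly from the floor definition, but the formula-based approach above is shorter.
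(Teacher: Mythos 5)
Your proposal is correct. For part~(ii) it coincides with the paper's own argument: apply Lemma~\ref{l:form}~(ii) and then Lemma~\ref{l:form}~(i), observe the cancellation, and read off $\ipro[k]{m}{\fdiv[k]{m}{\vec i}} = \vec i - n^k\vec R$ with $\vec R \in [0,1-1/n^k]^d$. For part~(i), however, you take a genuinely different route. The paper computes directly for $k=1$ that $\fdiv{q}{\ipro{p}{\vec i}} = \lfloor \vec i + \tfrac{p-q}{n}\rfloor = \vec i$ when $0 \le p-q < n$, and then iterates (the iterates of $\operatorname{M}_p$ and $\operatorname{D}_q$ commute with themselves, so one can peel off one level at a time). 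You instead compose the two closed-form expressions of Lemma~\ref{l:form} to get $\fdiv[k]{q}{\ipro[k]{p}{\vec i}} = \vec i + \tfrac{(p-q)(n^k-1)}{n^k(n-1)} - \vec R$, note that both the positive correction and $\vec R$ lie in $[0,\,1-1/n^k]^d$, and conclude by integrality that the discrepancy vanishes. Both arguments are sound; yours handles all $k$ in one stroke at the price of the (correct) integrality observation, while the paper's is more elementary per step but needs the induction you mention as an alternative. The one point worth being explicit about, which you already flag, is that $p-q \le n-1$ is needed in that sharp form so the correction stays strictly below $1$; with that, the argument closes.
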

\begin{proof}
	$\fdiv{q}{\ipro{p}{\vec i}} =\lfloor \vec i + \frac{p-q}{n} \rfloor$
	which equals $\vec{i}$ if $0\leq p-q < n$.
	For $k>1$ it is a matter of iterating the previous result.
	
	To show (ii) observe that applying Lemma~\ref{l:form}~(ii) and then 
Lemma~\ref{l:form}~(i) we have that $\ipro[k]{m}{\fdiv[k]{m}{\vec i}} = i - 
n^kR$, for some $R$ such that $0\leq R \leq 1 - \frac{1}{n^k}$. So the result is 
clear.
\end{proof}

\begin{lemma}[Inequalities]\label{l:inequalities}  
	\begin{enumerate}[(i)]
		\item $\ipro[k]{m}{\vec j} \leq \vec i$ if and only if
		$\vec j \leq \fdiv[k]{m}{\vec i}$.
		\item $\ipro[k]{m}{\vec j} \geq \vec i$ if and only if
		$\vec j \geq \fdiv[k]{m-(n-1)}{\vec i}$.
	\end{enumerate}
\end{lemma}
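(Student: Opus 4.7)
The plan is to prove both parts by induction on $k$, with the vector case reducing componentwise to the scalar case, so I will argue for scalar integers $i,j\in\Z$.

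For the base case $k=1$ of part~(i), since $\ipro{m}{j}=nj+m$, the inequality $\ipro{m}{j}\le i$ is equivalent to $j\le \frac{i-m}{n}$, and since $j\in\Z$ this is in turn equivalent to $j\le \lfloor \frac{i-m}{n}\rfloor = \fdiv{m}{i}$. For the base case of part~(ii), $nj+m\ge i$ iff $j\ge \frac{i-m}{n}$, i.e.\ $j\ge \lceil \frac{i-m}{n}\rceil$. Then I would invoke the standard identity $\lceil x/n\rceil = \lfloor (x+n-1)/n\rfloor$ valid for $x\in\Z$, so that $\lceil \frac{i-m}{n}\rceil = \lfloor \frac{i-(m-(n-1))}{n}\rfloor = \fdiv{m-(n-1)}{i}$, giving the base case of~(ii).

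For the inductive step, I use that $\ipro[k]{m}{j} = \ipro{m}{\ipro[k-1]{m}{j}}$ and $\fdiv[k]{m}{i} = \fdiv[k-1]{m}{\fdiv{m}{i}}$ (both being the $k$-fold iterate, composed in either order). For~(i), the chain
\[
\ipro[k]{m}{j}\le i \;\Longleftrightarrow\; \ipro[k-1]{m}{j}\le \fdiv{m}{i} \;\Longleftrightarrow\; j\le \fdiv[k-1]{m}{\fdiv{m}{i}} = \fdiv[k]{m}{i}
\]
follows by applying the $k=1$ case and then the inductive hypothesis. The argument for~(ii) is completely analogous:
\[
\ipro[k]{m}{j}\ge i \;\Longleftrightarrow\; \ipro[k-1]{m}{j}\ge \fdiv{m-(n-1)}{i} \;\Longleftrightarrow\; j\ge \fdiv[k]{m-(n-1)}{i},
\]
again combining the base case with the induction hypothesis.

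There is no real obstacle here; the only subtlety is the asymmetry between~(i) and~(ii), which is precisely the shift by $-(n-1)$ accounting for the difference between floor and ceiling on integers. Once the identity $\lceil x/n\rceil = \lfloor (x+n-1)/n\rfloor$ is noted at level $k=1$, everything else is a routine induction, and the extension from scalars to vectors $\vec i,\vec j\in\Z^d$ is immediate because both the index functions $\ipro[k]{m}{\cdot}$, $\fdiv[k]{m}{\cdot}$ and the relation $\le$ are defined componentwise.
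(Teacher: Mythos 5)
Your proof is correct, but it takes a genuinely different route from the paper's. The paper does not induct on $k$: it observes that $\operatorname{M}_m^k$ and $\operatorname{D}_m^k$ are nondecreasing and then invokes the already-established Inverse lemma (Lemma~\ref{l:inv1}), which gives $\fdiv[k]{q}{\ipro[k]{p}{\vec i}}=\vec i$ for $0\le p-q<n$ and the two-sided bound $\vec i-(n^k-1)\le \ipro[k]{m}{\fdiv[k]{m}{\vec i}}\le\vec i$; applying $\operatorname{D}^k$ or $\operatorname{M}^k$ to both sides of the hypothesized inequality then yields each implication in one line, with the explicit formula of Lemma~\ref{l:form}~(ii) used to control the remainder term in part~(ii). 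You instead establish the $k=1$ equivalence directly from the definitions (the floor characterization for~(i), and the identity $\lceil x/n\rceil=\lfloor(x+n-1)/n\rfloor$ for~(ii), which is exactly where the shift $m\mapsto m-(n-1)$ comes from) and then observe that such equivalences compose under iteration, so induction on $k$ finishes the job. Your argument is self-contained and makes the adjunction-like structure of the pair $(\operatorname{M}_m,\operatorname{D}_m)$ explicit, at the cost of re-deriving at level $k=1$ what the paper's Lemma~\ref{l:inv1} already packages; the paper's version is shorter given that Lemmas~\ref{l:form} and~\ref{l:inv1} are needed elsewhere anyway. The componentwise reduction to the scalar case is legitimate in both treatments since all the index functions and the partial order decouple.
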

\begin{proof}
	First note that $\operatorname{M}_m^k$ and $\operatorname{D}_m^k$ are non 
decreasing index functions. Hence if $\ipro[k]{m}{\vec j} \leq \vec i$ then 
$\fdiv[k]{m}{\ipro[k]{m}{\vec j}} \leq \fdiv[k]{m}{\vec i}$ and by 
Lemma~\ref{l:inv1}~(i) we have that $\vec j \leq \fdiv[k]{m}{\vec i}$. The other 
implication follows exactly in the same way applying Lemma~\ref{l:inv1}~(ii).
	
	Furthermore, if  $\ipro[k]{m}{\vec j} \geq \vec i$ taking  $p=m$ and 
$q=m-(n-1)$ in Lemma~\ref{l:inv1}~(i) we have  $\vec j \geq 
\fdiv[k]{m-(n-1)}{\vec i}$. On the other hand, by Lemma~\ref{l:form}~(ii), we 
can show that $\ipro[k]{m}{\fdiv[k]{m - (n-1)}{\vec i}} = i + n^kR$, for some 
$R$ such that $0\leq R \leq 1 - \frac{1}{n^k}$. Hence if
	$\vec j \geq \fdiv[k]{m-(n-1)}{\vec i}$ then $\ipro[k]{m}{\vec j} \geq 
\ipro[k]{m}{\fdiv[k]{m - (n-1)}{\vec i}} = i + n^kR \geq \vec i$.
\end{proof}

We say that a index function $P$ \emph{decouples} if there exist functions 
$P_j:\Z\to\Z$ such that $[P(\vec i)]_\ell=P_\ell(i_\ell)$, for $\ell\in[1:d]$. 
As well, we say that $P$ is \emph{non decreasing} if $P(\vec i)\leq P(\vec j)$ 
for $\vec i \leq \vec j$.

\begin{lemma}\label{lem:decouples}
	Let $F$ be a box function given by $F(\vec i)=(P(\vec i):Q(\vec i))$ such that 
$P$ and $Q$ are non decreasing and decouple. If $Q_\ell(i_\ell) - 
P_\ell(i_\ell+1)\geq -1$, for $j=[1:d]$. Then $F$ is box preserving, i.e. 
$F([\vec i:\vec j])=(P(\vec i):R(\vec j))$.
\end{lemma}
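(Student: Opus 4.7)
The plan is to prove both inclusions $F([\vec i:\vec j])\subset [P(\vec i):Q(\vec j)]$ and $[P(\vec i):Q(\vec j)]\subset F([\vec i:\vec j])$. Note first that, assuming the typo (the stated $R(\vec j)$ should read $Q(\vec j)$), because $P$ and $Q$ decouple the claim reduces to a component-wise statement: it suffices to show that for each $\ell \in [1:d]$, for all $i \le j$ in $\Z$,
\[
\bigcup_{k\in [i:j]}[P_\ell(k):Q_\ell(k)] = [P_\ell(i):Q_\ell(j)].
\]

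The easy inclusion $(\subset)$ is immediate from monotonicity: if $k\in[i:j]$, then $P_\ell(i)\le P_\ell(k)$ and $Q_\ell(k)\le Q_\ell(j)$, so $[P_\ell(k):Q_\ell(k)]\subset [P_\ell(i):Q_\ell(j)]$.

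For the harder inclusion $(\supset)$, I would pick any $m\in[P_\ell(i):Q_\ell(j)]$ and produce $k\in[i:j]$ with $m\in[P_\ell(k):Q_\ell(k)]$. The natural choice is
\[
k:=\max\{k'\in[i:j]:P_\ell(k')\le m\},
\]
which is well defined since $i$ belongs to the set by hypothesis. By construction $P_\ell(k)\le m$, so the only thing to verify is $m\le Q_\ell(k)$. If $k=j$, this is given. If $k<j$, the maximality gives $P_\ell(k+1)>m$, i.e.\ $m\le P_\ell(k+1)-1$, and the hypothesis $Q_\ell(k)-P_\ell(k+1)\ge -1$ (applied at this $k$) yields $P_\ell(k+1)-1\le Q_\ell(k)$, so $m\le Q_\ell(k)$, as required.

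The main (minor) obstacle is the case $k<j$; this is precisely where the assumption $Q_\ell(i_\ell)-P_\ell(i_\ell+1)\ge -1$ enters and it is needed to guarantee that the intervals $[P_\ell(k):Q_\ell(k)]$ and $[P_\ell(k+1):Q_\ell(k+1)]$ do not leave a gap of lattice points as $k$ increases by one. Combining the $d$ one-dimensional arguments component-wise by forming $\vec k = (k_1,\dots,k_d)$ with each $k_\ell$ chosen as above finishes the proof, because then $P(\vec k)\le \vec m\le Q(\vec k)$ and $\vec k\in[\vec i:\vec j]$.
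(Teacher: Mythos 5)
Your proof is correct and follows essentially the same route as the paper's: both establish the two inclusions, getting the easy one from monotonicity and the hard one by producing, for each component, an index $w_\ell\in[i_\ell:j_\ell]$ with $P_\ell(w_\ell)\le z_\ell\le Q_\ell(w_\ell)$ via the no-gap hypothesis $Q_\ell(k)-P_\ell(k+1)\ge -1$. If anything, your explicit choice $k=\max\{k'\in[i:j]:P_\ell(k')\le m\}$ supplies the detail that the paper's proof merely asserts when it says ``then there exists $w_\ell$,'' so your write-up is slightly more complete at that step.
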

\begin{proof}
	Let $\vec z \in F[\vec i:\vec j]$, then there exists $\vec w \in [\vec i:\vec 
j]$ such that $\vec z \in (P(\vec w):Q(\vec w))$, i.e. $P(\vec w)\leq \vec z 
\leq Q(\vec w)$. Since $P$ and $Q$ are non decreasing $P(\vec i)\leq P(\vec 
w)\leq \vec z \leq Q(\vec w) \leq Q(\vec j)$. Hence $\vec z \in (P(\vec 
i):Q(\vec j))$, therefore $F[\vec i:\vec j]\subset (P(\vec i): Q(\vec j))$.
	
	On the other hand, let $\vec z \in (P(\vec i):Q(\vec j))$ then $P(\vec i)\leq 
\vec z\leq Q(\vec j)$. Since $P$ and $Q$ decouple, for each $j\in[1:d]$ we have 
that $P_\ell(i_\ell)\leq z_\ell \leq Q_\ell(j_\ell)$. Besides $Q_\ell(i_\ell) - 
P_\ell(i_\ell+1)\geq -1$, for $\ell=[1:d]$, then there exists $w_\ell \in 
[i_\ell:j_\ell]$ such that $P_\ell(w_\ell)\leq z_\ell\leq Q_\ell(w_\ell)$. 
Hence, $\vec w=(w_1,\ldots,w_d)\in [\vec i: \vec j]$ satisfies that $P(\vec 
w)\leq \vec z\leq Q(\vec w)$. Therefore, $\vec z \in F[\vec i:\vec j]$.
\end{proof}

\begin{lemma}\label{lem:box_preserving_composition}
	If $F$ and $G$ are box preserving functions, then the composition  $F\circ G$ 
is a box function and evenmore is box preserving.
\end{lemma}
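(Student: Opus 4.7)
The plan is to unfold the definitions directly and use the box-preservation hypotheses of $F$ and $G$ in sequence. Since both $F$ and $G$ are box functions, there exist index functions $P_F, Q_F, P_G, Q_G$ with $F(\vec{i}) = [P_F(\vec{i}):Q_F(\vec{i})]$ and $G(\vec{i}) = [P_G(\vec{i}):Q_G(\vec{i})]$. The first step is to make sense of $F \circ G$: since $G$ maps an index to a box, the natural interpretation is $(F \circ G)(\vec{i}) := F[G(\vec{i})] = \bigcup_{\vec{k} \in G(\vec{i})} F(\vec{k})$, and by box preservation of $F$ applied to the box $G(\vec{i}) = [P_G(\vec{i}):Q_G(\vec{i})]$ one gets
\[
(F\circ G)(\vec{i}) = F\bigl[[P_G(\vec{i}):Q_G(\vec{i})]\bigr] = [P_F(P_G(\vec{i})):Q_F(Q_G(\vec{i}))].
\]
This shows that $F \circ G$ is a box function with index functions $P_F \circ P_G$ and $Q_F \circ Q_G$.

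For the box-preservation property itself I would compute $(F\circ G)[[\vec{i}:\vec{j}]]$ directly:
\[
(F\circ G)[[\vec{i}:\vec{j}]] = \bigcup_{\vec{k}\in[\vec{i}:\vec{j}]} F[G(\vec{k})] = F\Bigl[\,\bigcup_{\vec{k}\in[\vec{i}:\vec{j}]} G(\vec{k})\,\Bigr] = F\bigl[G[[\vec{i}:\vec{j}]]\bigr].
\]
Then apply the hypothesis that $G$ is box preserving to rewrite $G[[\vec{i}:\vec{j}]] = [P_G(\vec{i}):Q_G(\vec{j})]$, followed by the hypothesis that $F$ is box preserving on this resulting box to obtain
\[
(F\circ G)[[\vec{i}:\vec{j}]] = F\bigl[[P_G(\vec{i}):Q_G(\vec{j})]\bigr] = [P_F(P_G(\vec{i})):Q_F(Q_G(\vec{j}))],
\]
which is exactly the required identity with the index functions of $F\circ G$ evaluated at $\vec{i}$ and $\vec{j}$ respectively.

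There is no real obstacle here; the only subtlety is being careful about the interpretation of composition (index-to-box applied to a box via union) and justifying the interchange $F[\bigcup_k G(\vec{k})] = \bigcup_k F[G(\vec{k})]$, which is immediate from the definition $F[A] := \bigcup_{\vec{k}\in A} F(\vec{k})$ used throughout the paper. Everything else is a two-line chain of equalities using the hypotheses.
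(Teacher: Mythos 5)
Your proof is correct and follows essentially the same route as the paper's: first apply box preservation of $F$ to the single box $G(\vec i)$ to see that $F\circ G$ is a box function with index functions $P_F\circ P_G$ and $Q_F\circ Q_G$, then apply box preservation of $G$ and of $F$ in sequence to get $(F\circ G)[[\vec i:\vec j]] = [P_F(P_G(\vec i)):Q_F(Q_G(\vec j))]$. The only difference is that you spell out the union-interchange step $F[\bigcup_{\vec k}G(\vec k)]=\bigcup_{\vec k}F[G(\vec k)]$, which the paper leaves implicit.
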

\begin{proof}
	Let $P_F, Q_F, P_G$ and $Q_G$ index functions such that $F(\vec i)=(P_F(\vec 
i), Q_F(\vec i))$ and $G(\vec i)=(P_G(\vec i), Q_G(\vec i))$. Notice that 
$F\circ G(\vec i) = F(G(\vec i))=F(P_G(\vec i): Q_G(\vec i))$, since $F$ is box 
preserving then $F\circ G(\vec i) = (P_F\circ P_G(\vec i):Q_F\circ Q_G(\vec 
i))$. Hence, $F\circ G$ is a box function. 
	Moreover, $F\circ G[\vec i:\vec j]= F(G[\vec i:\vec j])$, since $F$ and $G$ 
are box preserving $F\circ G[\vec i:\vec j]= F(P_G(\vec i):Q_G(\vec 
j))=(P_F\circ P_G(\vec i):Q_F\circ Q_G(\vec j))$, i.e. $F\circ G$ is box 
preserving.
\end{proof}
\begin{corollary}\label{lem:box_preserving_iteration}
	If $F(\vec i)=(P(\vec i):Q(\vec i))$ is box preserving, then the $k$-th 
iterate of $F$ satisfies that $F^k(\vec i)=(P^k(\vec i), Q^k(\vec i))$ and is 
box preserving.
\end{corollary}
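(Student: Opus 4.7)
The plan is to derive this corollary by a straightforward induction on $k$, using Lemma~\ref{lem:box_preserving_composition} as the engine. The base case $k=1$ is immediate: by assumption $F^1 = F$ is box preserving and is given in the form $F(\vec i) = (P(\vec i):Q(\vec i))$, so we can identify $P^1 = P$ and $Q^1 = Q$.

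For the inductive step, suppose $F^k$ is box preserving and has the form $F^k(\vec i) = (P^k(\vec i): Q^k(\vec i))$, where $P^k$ and $Q^k$ denote the $k$-th iterates of the index functions $P$ and $Q$. Then I would write $F^{k+1} = F \circ F^k$ and apply Lemma~\ref{lem:box_preserving_composition} directly: the composition of two box preserving functions is a box preserving function, whose bounding index functions are the compositions of the respective bounding index functions. This yields $F^{k+1}(\vec i) = (P \circ P^k(\vec i) : Q \circ Q^k(\vec i)) = (P^{k+1}(\vec i): Q^{k+1}(\vec i))$, and box preservation is inherited from the lemma.

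There is no real obstacle here; the work has already been absorbed into Lemma~\ref{lem:box_preserving_composition}, which handles both the ``is a box function'' and the ``is box preserving'' assertions simultaneously for compositions. The only mild subtlety is notational: one must be careful that the $P^k$ appearing in the statement of the corollary is indeed the $k$-fold composition of $P$ with itself, which is automatic from the recursion $F^{k+1} = F \circ F^k$ when one tracks how the corner index functions compose under Lemma~\ref{lem:box_preserving_composition}.
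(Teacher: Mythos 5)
Your proof is correct and follows exactly the route the paper intends: the corollary is an immediate consequence of Lemma~\ref{lem:box_preserving_composition} by induction on $k$, writing $F^{k+1}=F\circ F^k$ and tracking that the corner index functions compose. Nothing is missing.
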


\bibliographystyle{siam}
\bibliography{biblio}
\end{document}